\patchcommand\@starttoc{\begin{quote}}{\end{quote}}
\setlist[description]{style=multiline,topsep=4pt,align=parright}
\let\reftagform@=\tagform@
\def\tagform@#1{\maketag@@@{(\ignorespaces\textcolor{black}{#1}\unskip\@@italiccorr)}}
\newcommand{\iref}[1]{\textup{\reftagform@{\tcr{\ref{#1}}}}}
\begin{document}

\title{Local Convergence Properties of SAGA/Prox-SVRG and Acceleration}
\author{Clarice Poon\thanks{Equal contributions.}~~\thanks{DAMTP, University of Cambridge, Cambridge, UK. E-mail: C.M.H.S.Poon@maths.cam.ac.uk.}
\and
Jingwei Liang\samethanks[1]\thanks{DAMTP, University of Cambridge, Cambridge, UK. E-mail: jl993@cam.ac.uk.}
\and
Carola-Bibiane Sch{\"{o}}nlieb\thanks{DAMTP, University of Cambridge, Cambridge, UK. E-mail: cbs31@cam.ac.uk.}}
\date{}
\maketitle

\begin{abstract}
Over the past ten years, driven by large scale optimisation problems arising from machine learning, the development of stochastic optimisation methods have witnessed a tremendous growth. 
However, despite their popularity, the theoretical understandings of these methods are quite limited in contrast to the deterministic optimisation methods. 
In this paper, we present a local convergence analysis for a typical type of stochastic optimisation methods: proximal variance reduced stochastic gradient methods, and mainly focus on the SAGA \cite{saga14} and Prox-SVRG~\cite{proxsvrg14} algorithms. 
Under the assumption that the non-smooth component of the optimisation problem is partly smooth relative to a smooth manifold, we present a unified framework for the local convergence analysis of the SAGA/Prox-SVRG algorithms: (i) the sequences generated by the SAGA/Prox-SVRG are able to identify the smooth manifold in a finite number of iterations; (ii) then the sequence enters a local linear convergence regime. 
Beyond local convergence analysis, we also discuss  various possibilities for accelerating these algorithms, including adapting to better local parameters, and applying higher-order deterministic/stochastic optimisation methods which can achieve super-linear convergence. 
Concrete examples arising from machine learning are considered to verify the obtained results. 
\end{abstract}

\begin{keywords}
Forward--Backward, stochastic optimisation, variance reduced technique, SAGA, Prox-SVRG, partial smoothness, finite activity identification, local linear convergence, acceleration
\end{keywords}

\begin{AMS}
90C15, 90C25, 65K05, 49M37
\end{AMS}

\section{Introduction}\label{sec:introduction}

\subsection{Non-smooth optimisation}

Modern optimisation has become a core part of many fields in science and engineering, such as machine learning, inverse problem and signal/image processing, to name a few. 
In a world of increasing data demands, there are two key driving forces behind modern optimisation.
\begin{itemize}
\item Non-smooth regularisation. We are often faced with models of high complexity, however, the  solutions of interest often lie on a manifold of low dimension which is promoted by the non-smooth regulariser. There have been several recent studies explaining how proximal gradient methods identify this low dimensional manifold and efficiently output solutions which take a certain structure; see for instance \cite{liang2017activity} for the case of deterministic proximal gradient methods.
\item Stochastic methods. The past decades have seen an exponential growth in the data sizes that we have to handle, and stochastic methods have been popular due to their low computational cost; see for instance \cite{sag17,saga14,proxsvrg14} and references therein. 
\end{itemize}
The purpose of this paper is to show that proximal variance reduced stochastic gradient methods allow to benefit from both efficient structure enforcement and low computational cost. In particular, we present a study of manifold identification and local acceleration properties of these methods when applied to the following structured minimisation problem:
\beq\label{eq:min-Phi}\tag{$\mathcal{P}$}
\min_{x\in\bbR^n} ~~\Phi(x) \eqdef R(x) + F(x)    ,
\eeq
where $R(x)$ is a non-smooth structure imposing penalty term, and
\[
F(x) \eqdef \sfrac{1}{m} \msum_{i=1}^{m} f_{i}(x)  
\]
is the average of a finite sum, where each $f_{i}$ is smooth differentiable. 
We are interested in the problems where the value of $m$ is very large. 
A classic example of \eqref{eq:min-Phi} is $\ell_{1}$-norm regularised least square estimation (\ie the LASSO problem), which reads
\[
	\min_{x\in\bbR^n} \mu \norm{x}_{1} + \sfrac{1}{m} \msum_{i=1}^{m} \sfrac{1}{2} \norm{\calK_{i}x - b_{i}}^2 ,
\]
where $\mu>0$ is the trade-off parameter, $\calK_{i}$ is the $i^{\textrm{th}}$ row of a matrix $\calK \in \bbR^{m \times n}$, and $b_{i}$ is the $i^{\textrm{th}}$ element of the vector $b \in \bbR^{m}$. 
More examples of problem \eqref{eq:min-Phi} can be found in Section \ref{sec:experiment}.

Throughout this paper, we consider the following basic assumptions for problem \eqref{eq:min-Phi}:
\begin{enumerate}[leftmargin=4.5em,label= {\rm (\textbf{A.\arabic{*}}) }, ref= {\rm \textbf{A.\arabic{*}}}]
\item \label{A:R} 
$R : \bbR^n \to {\bbR} \cup \ba{+\infty}$ is proper, convex and lower semi-continuous;
\item \label{A:F} 
$F : \bbR^n \to {\bbR}$ is continuously differentiable with $\nabla F$ being $L_{F}$-Lipschitz continuous. For each index $i=1,\dotsm,m$, $f_{i}$ is continuously differentiable with $L_{i}$-Lipschitz continuous gradient;
\item \label{A:minimizers-nonempty} 
$\Argmin(\Phi) \neq \emptyset$, that is the set of minimisers is non-empty.
\end{enumerate}
In addition to assumption \iref{A:F}, define 
\[
L \eqdef \max_{i=\ba{1,\dotsm,m}} L_{i} ,
\]
which is the uniform Lipschitz continuity of functions $f_{i}$. 
Note that $L_F \leq \frac{1}{m}\sum_i L_i \leq L$ holds.

\subsection{Deterministic Forward--Backward splitting method}

A classical approach to solve \eqref{eq:min-Phi} is the Forward--Backward splitting (FBS) method \cite{lions1979splitting}, which is also known as the \emph{proximal gradient descent method}.
Given a current point $\xk$, the standard non-relaxed Forward--Backward iteration updates the next point $\xkp$ based on the following rule,
\beq \label{eq:fbs}
\xkp = \prox_{\gamma_k R} \Pa{\xk - \gamma_k \nabla F(\xk)} ,~~ \gamma_k \in ]0, 2/L_{F}[  ,
\eeq
where $\prox_{\gamma R}$ is the \emph{proximity operator} of $R$ which is defined as
\beq\label{eq:proximity-opt}
\prox_{\gamma R} (\cdot) \eqdef \min_{x\in\bbR^n} \gamma R(x)  +  \qfrac{1}{2}\norm{x-\cdot}^2  .
\eeq
Throughout this paper, unless otherwise stated, ``Forward--Backward splitting'' or ``FBS'' refers to the deterministic Forward--Backward splitting scheme \eqref{eq:fbs}. 

Since the original work \cite{lions1979splitting}, the properties of Forward--Backward splitting have been extensively studied in the literature. In general, the advantages of this method can be summarised as following:
\begin{itemize}
\item 
Robust convergence guarantees. The convergence of the method is guaranteed as long as $0 < \ugamma  \leq \gamma_{k} \leq \ogamma < 2/L_{F} $ holds for some $\ugamma, \ogamma > 0$ \cite{combettes2005signal}, for both the sequence $\seq{\xk}$ and the objective function value $\seq{\Phi(\xk)}$; 
\item
Known convergence rates. It is well established that the sequence of FBS scheme converges at the rate of $\norm{\xk-\xkm} = o(1/\sqrt{k})$ \cite{liang2014convergence}, while the objective function converges at the rate of $\Phi(\xk)-\Phi(\xsol) = o(1/k)$~\cite{molinari2018convergence} where $\xsol \in \Argmin(\Phi)$ is a global minimiser. These rates can be improved to linear\footnote{Linear convergence is also known as geometric or exponential convergence.} if for instance strong convexity is assumed \cite{nesterov2004introductory};
\item
Numerous acceleration techniques. Extensive acceleration schemes have been proposed over the decades, for instance the inertial schemes which contains inertial FBS \cite{moudafi2003convergence,lorenz2015inertial,liang2017activity}, FISTA \cite{fista2009} and Nesterov's optimal methods \cite{nesterov2004introductory};
\item 
Structure adaptivity. There has been several recent work \cite{liang2014local,liang2017activity} exploring the manifold identification properties of FBS, in particular, under the non-degeneracy condition that
\beq\label{eq:cnd-nd}\tag{ND}
- \nabla F(\xsol) \in \ri\Pa{\partial R(\xsol)}  ,
\eeq 
where $\ri\pa{\partial R(\xsol)}$ denotes the \emph{relative interior} of the sub-differential $\partial R(\xsol)$. 
It is shown in \cite{liang2017activity} that after a finite number of iterations, the FBS iterates $x_k$ all lie on the same manifold as the optimal solution $\xsol$. In the case of $R = \norm{\cdot}_1$, this equates to saying that there exists some $K\in\NN$ such that $x_k$ has the same sparse pattern as $\xsol$ for all $k\geq K$. Furthermore, upon identifying this optimal manifold, the FBS iterates can be proved to converge linearly to the optimal solution~$\xsol$. 
\end{itemize}
However, despite the above advantages of FBS, for the considered problem \eqref{eq:min-Phi}, when the value of $m$ is very large, the computational cost of $\nabla F(\xk)$ could be very expensive, which makes the deterministic FBS-type methods unsuitable for solving the large-scale problems arising from machine learning.

\subsection{Proximal variance reduced stochastic gradient methods}

The most straightforward extension of   stochastic gradient descent to the ``smooth + non-smooth'' setting is the \emph{proximal stochastic gradient descent} (Prox-SGD), which reads
\beq\label{eq:psgd}
\begin{aligned}
&\textrm{For $k = 0, 1,2,3,\dotsm$} \\
&\left\lfloor \begin{aligned}
& \textrm{sample $i_{k}$ uniformly from $\ba{1,\dotsm,m}$}  \\
& \xkp = \prox_{\gamma_{k} R} \Pa{ \xk - \gamma_{k} \nabla f_{i_{k}} (\xk) }  .
\end{aligned}
\right. \\
\end{aligned}
\eeq
The advantage of Prox-SGD over FBS scheme is that at each iteration, Prox-SGD only evaluates the gradient of one sampled function $f_{\ik}$, while FBS needs to compute $m$ gradients. 
However, to ensure the convergence of Prox-SGD, the step-size $\gammak$ of Prox-SGD has to converge to $0$ at a proper speed (\eg $\gammak = k^s$ for $s \in ]1/2, 1]$), leading to only $O(1/\sqrt{k})$ convergence rate for $\Phi(\xk)-\Phi(\xsol)$. 
Moreover, when $\Phi$ is strongly convex, the rate for the objective can only be improved to $O(1/k)$ which is much slower than the linear rate of FBS.

\paragraph{Prox-SGD has no manifold identification properties} 
Besides slow convergence speed, another disadvantage of Prox-SGD, when compared to FBS, is that the sequence $\seq{\xk}$ generated by the method  is unable to identify the structure of the problem, \ie no finite time manifold identification property. 

To give an intuitive explanation as to why the iterates of Prox-SGD are inherently unstructured, we first provide an alternative perspective of treating Prox-SGD, the perturbation of deterministic Forward--Backward splitting method. 
More precisely, this method can be written as the inexact Forward--Backward splitting method with stochastic approximation error on the gradient,
\beq\label{eq:inexact-fbs}
\begin{aligned}
&\textrm{For $k = 0, 1,2,3,\dotsm$} \\
&\left\lfloor \begin{aligned}
& \textrm{sample $\epsk$ from a finite distribution $\calD_{k}$} , \\
& \xkp = \prox_{\gamma_k R} \Pa{\xk - \gamma_k ( \nabla F(\xk) + \epsk) }  . \\
\end{aligned}
\right. \\
\end{aligned}
\eeq
For most stochastic gradient methods, we have $\EE\sp{\epsk} = 0$ and $\norm{\epsk}^2$ is the variance of the stochastic gradient. 
The stochastic approximation error $\epsk$ for Prox-SGD takes the form
\beq\label{eq:error-psgd}
\epsksgd \eqdef \nabla f_{i_{k}} (\xk) - \nabla F(\xk)  .
\eeq

Manifold identification for FBS can be guaranteed under the non-degeneracy condition \eqref{eq:cnd-nd}. In fact, from the definition of proximity operator \eqref{eq:proximity-opt}, at each iteration, we have 
\[
g_k \eqdef - \qfrac{x_{k+1}-x_{k}}{\gamma_k} -\nabla F(x_k) - \epsk \in \partial R(x_{k+1}) 
\]
and manifold identification can be guaranteed if $g_k \to -\nabla F(\xsol) $ as $ k\to\infty.$ The issue in the case of Prox-SGD is that although we have that in expectation $\EE\sp{\nabla f_{\ik}(\xk)} = \nabla F(\xk)$,  the error $\epsksgd$ is only bounded and in general does not converge to $0$ even if $\seq{\xk}$ converges to a global minimiser $\xsol \in \Argmin(\Phi)$. 

\vgap

We present a simple example to illustrate that Prox-SGD does not have manifold identification properties in general. Consider the following LASSO problem in 3D,
\[
\min_{x\in\bbR^3} \sfrac{1}{3} \norm{x}_{1} + \qfrac{1}{3} \msum_{i=1}^{3} \sfrac{1}{2} \norm{\calK_{i}x - b_{i}}^2   ,
\]
where
\[
\calK =
\begin{bmatrix}
1 & 0 & 0 \\
0 & \sqrt{2} & 0 \\
0 & 0 & \sqrt{3}
\end{bmatrix}
\qandq~~
b =
\begin{pmatrix}
2 \\ \sqrt{2}/3 \\ \sqrt{3}/4
\end{pmatrix}  .
\]
The optimal solution of this particular problem is $\xsol = (1,0,0)^T$ and writing $F(x)\eqdef \frac{1}{6}\norm{\calK x-b}^2$, we have that the non-degeneracy condition
\[
-\nabla F(\xsol) = \qfrac{1}{3} \begin{pmatrix}
1\\
{2}/3\\
{3}/4
\end{pmatrix} \in \ri\Ppa{\sfrac{1}{3} \partial \norm{\xsol}_1}  ,
~~~\textrm{where}~~~
(\partial \norm{\xsol})_{i}
= 
\sign\pa{x_{i}} = 
\left\{
\begin{aligned}
	&\enskip+1 &&: x_{i} > 0  ,  \\
	&[-1, +1] &&: x_{i} = 0  ,  \\
	&\enskip-1 &&: x_{i} < 0  ,
\end{aligned}
\right.
\]
It is furthermore straightforward to verify that $\norm{\nabla f_i(x) - \nabla F(x)}\geq \norm{\nabla F(x)}$ for all $i=1,2,3$. 
Moreover, if Prox-SGD is starting with $x_0 = (\mu,0,0)^T$ with $\mu\in\RR$, then with probability $2/3$ the first iterate of the algorithm satisfies $x_1\not\in \Mm_{\xsol} \eqdef \enscond{(x,0,0)}{x\in\RR}$. In fact, $x_1$ will have 2 non-zero entries if $\abs{\mu}>\gamma_1$ and $i_1 \in \{2,3\}$. Figure \ref{fig:sgd-noiden} shows the support sizes of the Prox-SGD iterates over $10^6$ iterations.
\begin{figure}[!ht]
{\centering
\includegraphics[width=0.5\linewidth]{./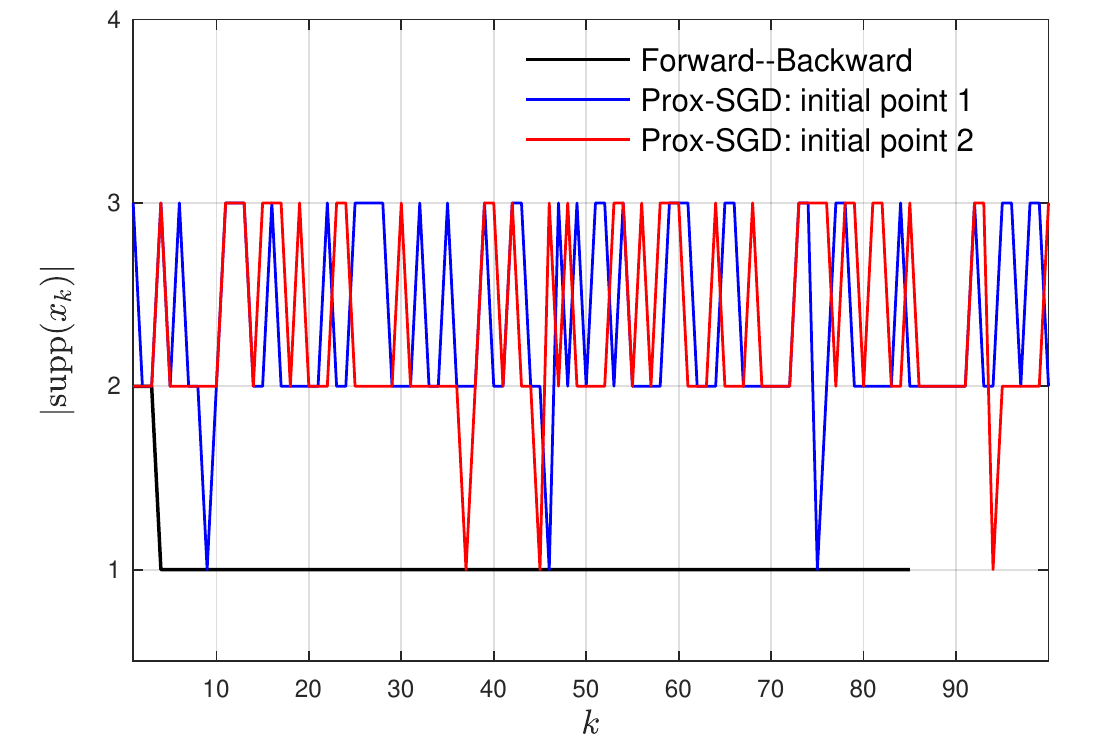}  
\caption{
Support identification comparison between FBS and Prox-SGD. For Prox-SGD, ``initial point 1'' starts with an arbitrary point with all three elements non-zero;
``initial point 2'' starts with the point $10\xsol$. 
The maximum number of iteration for Prox-SGD is $10^6$, the blue and red lines are sub-sampled, one out of every $10^4$ points.
}
\label{fig:sgd-noiden}
\vspace{-1em}}
\end{figure}

\subsubsection{Variance reduced methods}
To overcome the vanishing step-size and slow convergence speed of Prox-SGD, various (stochastic) incremental schemes are developed in literature; see for instance \cite{blatt2007convergent,vanli2016global,roux2012stochastic,sag17,saga14,svrg,proxsvrg14} and the references therein. 
Under stochastic setting, the variance reduced techniques are very popular approach, which have the following two main characteristics:
\begin{itemize}
\item 
Same as Prox-SGD, in expectation, the stochastic gradient remains an unbiased estimation of the full gradient;
\item
Different from Prox-SGD, the variance $\norm{\epsk}^2$ converges to $0$ when $\xk$ approaches the solution $\xsol$.
\end{itemize}
In the following, we introduce two well-known examples of variance reduced methods, the SAGA algorithm~\cite{saga14} and Prox-SVRG algorithm~\cite{proxsvrg14}, which are the main targets of this paper.

\paragraph{SAGA algorithm \cite{saga14}}

Similar to Prox-SGD algorithm, at each iteration $k$, the gradient of a sampled function $\nabla f_{i_{k}}(\xk)$ is computed by the SAGA algorithm where $i_{k}$ is uniformly sampled from $\ba{1,\dotsm, m}$. 
In the meantime, let $\ba{\nabla f_{i_{j}}(x_{k-j})}_{j=1,\dotsm,m}$ be the gradients history over the past $m$ steps, then the combination of these two aspects with additional debiasing yield the unbiased gradient approximation of the SAGA algorithm. 

Given an initial point $x_{0}$, define the individual gradient $g_{0,i} \eqdef  \nabla f_{i} (x_{0}), i=1,\dotsm,m$. Then
\beq\label{eq:saga}
\begin{aligned}
&\textrm{For $k = 0, 1,2,3,\dotsm$} \\
&\left\lfloor \begin{aligned}
& \textrm{sample $i_{k}$ uniformly from $\ba{1,\dotsm,m}$} , \\
& \xkp = \prox_{\gamma_{k} R} \Pa{ \xk - \gamma_{k} \pa{ \nabla f_{i_{k}} (\xk) - g_{i_{k}, k}  +  \sfrac{1}{m} \ssum_{i=1}^{m} g_{i, k}  } }  , \\
& \textrm{update the gradient history:}~~ g_{k,i} = \left\{ \begin{aligned}  &\nabla f_{i}(\xk) && \textrm{if~~} i = i_{k}  ,  \\ &g_{k-1,i} && \textrm{o.w.} \end{aligned} \right. 
\end{aligned}
\right. \\
\end{aligned}
\eeq
SAGA successfully avoids the vanishing step-size of Prox-SGD, and has the same convergence rate as Forward--Backward splitting scheme. 
However, one distinctive drawback of SAGA is that, in general, its memory cost is proportional to the number of functions $m$. 

In the context of \eqref{eq:inexact-fbs}, the stochastic approximation error $\epsk$ of SAGA takes the form
\beq\label{eq:error-saga}
\epsksaga \eqdef \nabla f_{i_{k}} (\xk) - g_{k, i_{k}}  +  \qfrac{1}{m} \msum_{i=1}^{m} g_{k, i} - \nabla F(\xk)  .
\eeq

\paragraph{Prox-SVRG algorithm}
The SVRG \cite{svrg} (stochastic variance reduced gradient) method was originally proposed to solve \eqref{eq:min-Phi} with $R = 0$, later on in \cite{proxsvrg14} it is extended to the case of $R$ being non-trivial. 
Compared to SAGA, in stead of approximating the current gradient $\nabla F$ with the past $m$ gradients $\nabla f_{\ik}$, Prox-SVRG computes the full gradient of a given point along the iteration, and uses it for $P$ iterations where $P$ is on the order of $m$. 

Let $P$ be a positive integer. The iteration of the algorithm consists of two level of loops, for the sequence $\txl$ in the outer loop, full gradient $\nabla F(\txk)$ is computed. For the sequence $x_{\ell, p}$ in the inner loop, only the gradient of the sampled function is computed. 
\beq\label{eq:prox-svrg}
\begin{aligned}
&\textrm{For $\ell = 0,1,2,3,\dotsm$} \\
&\left\lfloor 	\begin{aligned}
& \tilde{g}_{\ell} = \tfrac{1}{m} \ssum_{i=1}^{m} \nabla f_{i}(\txl) , x_{\ell, 0} = \tilde x_{\ell},  \\
&\textrm{For $p=1,\dotsm,P$} \\
&\left\lfloor 	\begin{aligned}
			& \textrm{sample $i_{p}$ uniformly from $\ba{1,\dotsm,m}$}  \\
			& x_{\ell, p} = \prox_{\gamma_{k} R} \Pa{ x_{\ell, p-1} - \gamma_{k} \pa{ \nabla f_{i_{p}}(x_{\ell, p-1}) - \nabla f_{i_{p}}(\txl) + \tilde{g}_{\ell}  } }  .
		\end{aligned}
\right. \\
& \text{Option I}: \txlp = x_{\ell, P}  , \\ 
& \text{Option II}:  \txlp = \tfrac{1}{P} \ssum_{p=1}^{P} x_{\ell, p}  .
\end{aligned}
\right. \\
\end{aligned}
\eeq
Prox-SVRG can also afford non-vanishing step-size and has the same convergence rate as FBS scheme. It avoids the large memory cost of SAGA, however, the gradient evaluation complexity of Prox-SVRG is always higher than SAGA. For instance when $P=m$, the gradient evaluation of Prox-SVRG is three times that of  SAGA. 

In the context of \eqref{eq:inexact-fbs}, given $x_{\ell, p}$, denote $k = \ell P + p$, then we have $x_{\ell, p} = \xk$ and the stochastic approximation error $\epsk$ of Prox-SVRG reads
\beq\label{eq:error-proxsvrg}
\begin{aligned}
\epsksvrg 
&= \nabla f_{i_{p}}(\xk) - \nabla f_{i_{p}}(\txl) + \tilde{g}_{\ell} - \nabla F(\xk)  .
\end{aligned}
\eeq

\subsection{Contributions}

In recent years, local linear convergence behaviours of the deterministic FBS-type methods have been studied under various scenarios. Particularly, in \cite{liang2017activity}, based on the notion of partial smoothness (see Definition~\ref{dfn:psf}), the authors propose a unified framework for local linear convergence analysis of Forward--Backward splitting and its variants including inertial FBS and FISTA \cite{fista2009,chambolle2015convergence}. 

In contrast to the deterministic setting, for the stochastic version of FBS scheme,  very limited results of this nature have been reported in the literature. 
However, in practice local linear convergence of stochastic proximal gradient descent has been observed without global strong convexity. 
More importantly, the low dimensional property of partial smoothness naturally reduces the computational cost and provides rich possibilities of acceleration. 
As a consequence, the lack of uniform analysis framework and exploiting the local acceleration are the main motivations of this work. 

\vspace{-0.6em}

\paragraph{Convergence of sequence for SAGA/Prox-SVRG}
Assuming only convexity, we prove the almost sure global convergence of the sequences generated by SAGA (see Theorem~\ref{thm:convergence-saga}) and Prox-SVRG with ``Option I'' (see Theorem~\ref{thm:convergence-proxsvrg}). 
Moreover, for Prox-SVRG algorithm with ``Option I'', an $O(1/k)$ ergodic convergence rate for the objective function is proved; see Theorem \ref{thm:convergence-proxsvrg}. 

\vspace{-0.6em}

\paragraph{Finite time manifold identification}
Let $\xsol \in \Argmin(\Phi)$ be a global minimiser of problem \eqref{eq:min-Phi}, and suppose that the sequence $\seq{\xk}$ generated by the  perturbed Forward--Backward \eqref{eq:inexact-fbs} converges to $\xsol$ almost surely. 
Then under the additional assumptions that the non-smooth function $R$ is partly smooth at $\xsol$ relative to a $C^2$-smooth manifold $\Msol$ (see Definition \ref{dfn:psf}) and a {non-degeneracy condition} (Eq.~\eqref{eq:cnd-nd}) holds at $\xsol$, in Theorem \ref{thm:abstract-identification} we prove a  general finite time manifold identification result for the perturbed Forward--Backward splitting scheme \eqref{eq:inexact-fbs}. 
The manifold identification means that after a finite number of iterations, say $K$, there holds $\xk \in \Msol$ for all $k \geq K$.

Specialising the result to SAGA and Prox-SVRG algorithms, we prove the finite manifold identification properties of them (see Corollary \ref{cor:identification}). 

\vspace{-0.6em}

\paragraph{Local linear convergence for SAGA/Prox-SVRG}
Building upon the manifold identification result, if moreover $F$ is locally $C^2$-smooth along $\Msol$ near $\xsol$ and a restricted injectivity condition (see Eq. \eqref{eq:cnd-ri}) is satisfied by the Hessian $\nabla^2 F(\xsol)$, we show that $\xsol$ is the unique minimiser of problem \eqref{eq:min-Phi} and moreover~$\Phi$ has local quadratic grow property around $\xsol$. 
As a consequence, we show that locally SAGA and Prox-SVRG converge linearly.

\vspace{-0.6em}

\paragraph{Local accelerations}
Another important implication of manifold identification is that the global non-smooth optimisation problem $\Phi$  becomes $C^2$-smooth locally along the manifold $\Msol$, and moreover is locally strongly convex if the restricted injectivity condition \eqref{eq:cnd-ri} is satisfied. 
This implies that locally we have many choices of acceleration to choose, for instance we can turn to higher-order optimisation methods, such as (quasi)-Newton methods or (stochastic) Riemannian manifold based optimisation methods which can lead to super linear convergence speed. 

\vgap

Lastly, for the numerical experiments considered in this paper, the corresponding MATLAB source code to reproduce the results is available online\footnote{\url{https://github.com/jliang993/Local-VRSGD}}.

\subsubsection{Relation to previous work}

Prior to our work, the identification properties of the \emph{regularised dual averaging algorithm} (RDA) \cite{xiao2010dual} were reported in \cite{lee2012manifold,duchi2016local}. 
The RDA algorithm is also proposed for solving problem \eqref{eq:min-Phi}, except that instead of being a finite sum, now the $F$ takes the form
\[
F(x) \eqdef \EE_{\xi} \Sp{ f(x; \xi) } = \int_{\Omega} f(x; \xi) \mathrm{d} \calD(\xi)  ,
\]
where $\xi \in \bbR^m$ is a random vector whose probability distribution $\calD$ is supported on the set $\Omega \subset \bbR^m$.

The RDA algorithm (\cite[Algorithm 1]{lee2012manifold}) for solving \eqref{eq:min-Phi} takes the following form, let $\xi_{1} = 0$ and $g_{0} = 0, \bar{g}_{0} =~0$,
\beq\label{eq:rda}
\begin{aligned}
&\textrm{For $k = 1,2,3,\dotsm$} \\
&\left\lfloor \begin{aligned}
& \textrm{sample $\xi_{k}$ from the distribution $\calD$, and compute: $\gk = \nabla f(\xk; \xi_{k})$;}  \\
& \textrm{update the averaged gradient: $\gbark = \sfrac{k-1}{k} \gbarkm + \sfrac{1}{k}\gk$;}  \\
& \textrm{update new point:}~ \xkp = \prox_{\tfrac{k}{\gamma_{k}} R} \Ppa{ - \sfrac{k}{\gamma_{k}} \gbark }  .
\end{aligned}
\right. \\
\end{aligned}
\eeq
Though proposed for \emph{infinite sum} problem, RDA can also applied to solve the \emph{finite sum} problem, moreover the convergence properties establish in \cite{lee2012manifold} remain hold. As a consequence, the identification property established there also holds true for the finite sum problem. 

Compare the proposed work and those of \cite{lee2012manifold,duchi2016local}, there are several differences need to be pointed out:
\begin{itemize}
\item
SAGA/Prox-SVRG and RDA are two very different types of methods. Although RDA can be applied to the finite sum problem, similarly to Prox-SGD, only $O(1/k)$ convergence rate can be achieved under strong convexity. While for the variance reduced SAGA/Prox-SVRG algorithms, linear convergence are available under strong convexity;
\item
For RDA algorithm, to the best of our knowledge, with only convexity assumption, so far there is no convergence result for the generated sequence $\seq{\xk}$. While for SAGA/Prox-SVRG algorithms, in this paper we prove the convergence properties of their generated sequences under only convexity assumption, which are new to the literature.
\end{itemize}

\subsection{Mathematical background}

Throughout the paper, $\bbN$ denotes the set of non-negative integers and $k \in \bbN$ denotes the index. 
$\bbR^n$ is the Euclidean space of $n$ dimension, and $\Id$ denotes the identity operator on $\bbR^n$. For a non-empty convex set $\Omega \subset \bbR^n$, $\ri(\Omega)$ and $\rbd(\Omega)$ denote its relative interior and boundary respectively, $\Aff(\Omega)$ is its affine hull, and $\LinHull(\Omega)$ is the subspace parallel to it. 
Denote $\proj_{\Omega}$ the orthogonal projector onto $\Omega$.

Given a proper, convex and lower semi-continuous function $R$, the sub-differential is defined by $\partial R(x) \eqdef \Ba{ g\in\bbR^{n} | R(y) \geq R(x) + \iprod{g}{y-x} ,~ \forall y\in\bbR^{n} }$. 
We say function $R$ is $\alpha$-strongly convex for some $\alpha > 0$ if $R(x) - \frac{\alpha}{2} \norm{x}^2$ still is convex.

\paragraph{Paper organisation}
The rest of the paper is organised as follows. 
In Section \ref{sec:global-convergence} we study the global convergence property of the sequence generated by SAGA and Prox-SVRG algorithms. The finite time manifold identification result is presented in Section \ref{sec:identification}. 
Local linear convergence and several local acceleration approaches are discussed in Section~\ref{sec:local-rate}. We conclude the paper with various numerical experiments in Section \ref{sec:experiment}. Several proofs of theorems are organised in Appendix \ref{appendix:proof}.

\section{Global convergence of SAGA/Prox-SVRG}
\label{sec:global-convergence}

In literature, though the global almost sure convergence of the objective function value  of SAGA/Prox-SVRG are well established \cite{saga14,proxsvrg14}, the convergence properties of the generated sequences are not proved unless strong convexity is assumed. In this section, we prove the almost sure convergence of the sequence generated by SAGA and Prox-SVRG with ``Option I'' without strong convexity assumption. The proofs of the theorems are provided in Appendix \ref{appendix:proof-sec2}.

\vspace{6pt}

We present first the convergence of the SAGA algorithm, recall that $L$ is the uniform Lipschitz continuity of all element functions $f_{i}, i=1,\dotsm,m$. 

\begin{theorem}[Convergence of SAGA]\label{thm:convergence-saga}
For problem \eqref{eq:min-Phi}, suppose that conditions \iref{A:R}-\iref{A:minimizers-nonempty} hold. Let $\seq{\xk}$ be the sequence generated by the SAGA algorithm \eqref{eq:saga} with $\gamma_k \equiv \gamma = 1/(3L)$, then there exists an $\xsol \in \Argmin(\Phi)$ such that almost surely we have $\Phi(\xk) \to \Phi(\xsol)$, $\xk \to\xsol$ and $\epsksaga \to 0$.
\end{theorem}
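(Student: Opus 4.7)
The plan is to build a nonnegative random Lyapunov functional, show it is an almost supermartingale, apply the Robbins--Siegmund convergence theorem, and then upgrade cluster-point information to convergence of the whole sequence via a stochastic Opial argument. Fix an arbitrary $\xsol \in \Argmin(\Phi)$, let $\mathcal{F}_{k}$ denote the filtration generated by $\ba{i_{0},\dotsm,i_{k-1}}$, and introduce
\[
T_{k} \eqdef \norm{\xk-\xsol}^{2} + \sfrac{c\gamma^{2}}{m}\sum_{i=1}^{m}\norm{g_{k,i}-\nabla f_{i}(\xsol)}^{2}
\]
for a constant $c>0$ to be chosen. Combining firm nonexpansiveness of $\prox_{\gamma R}$, unbiasedness with respect to $\mathcal{F}_{k}$ of the SAGA direction $d_{k} \eqdef \nabla f_{i_{k}}(\xk)-g_{k,i_{k}}+\sfrac{1}{m}\sum_{i=1}^{m} g_{k,i}$, the standard SAGA variance bound on $\EE\Sp{\norm{d_{k}-\nabla F(\xsol)}^{2}\mid\mathcal{F}_{k}}$, and the memory recursion
\[
\EE\Sp{\sfrac{1}{m}\sum_{i}\norm{g_{k+1,i}-\nabla f_{i}(\xsol)}^{2}\mid\mathcal{F}_{k}} = \pa{1-\sfrac{1}{m}}\sfrac{1}{m}\sum_{i}\norm{g_{k,i}-\nabla f_{i}(\xsol)}^{2} + \sfrac{1}{m^{2}}\sum_{i}\norm{\nabla f_{i}(\xk)-\nabla f_{i}(\xsol)}^{2} ,
\]
whose last term is at most $\sfrac{2L}{m}(\Phi(\xk)-\Phi(\xsol))$ by co-coercivity of each $\nabla f_{i}$ together with the optimality condition $-\nabla F(\xsol)\in\partial R(\xsol)$, one chooses $c$ so that at $\gamma=1/(3L)$ these inequalities combine into
\[
\EE\Sp{T_{k+1}\mid\mathcal{F}_{k}} \leq T_{k} - \alpha\pa{\Phi(\xk)-\Phi(\xsol)} - \beta\,\EE\Sp{\norm{\xkp-\xk}^{2}\mid\mathcal{F}_{k}}
\]
for constants $\alpha,\beta>0$.

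Robbins--Siegmund then yields almost sure convergence of $T_{k}$ to a finite random limit, together with $\sum_{k}(\Phi(\xk)-\Phi(\xsol))<\infty$ and $\sum_{k}\norm{\xkp-\xk}^{2}<\infty$. Hence $\seq{\xk}$ is a.s. bounded, $\xkp-\xk\to 0$, and every cluster point $\bar{x}$ of $\seq{\xk}$ satisfies $\Phi(\bar{x})\leq\Phi(\xsol)$ by lower semicontinuity, so $\bar{x}\in\Argmin(\Phi)$. The memory term of $T_{k}$ also converges a.s., and on a subsequence along which $\xk\to\bar{x}$ one extracts $g_{k,i}\to\nabla f_{i}(\bar{x})$ pathwise; substituting into the definition \eqref{eq:error-saga} of $\epsksaga$ then gives $\epsksaga\to 0$ a.s.

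To upgrade ``every cluster point is a minimiser'' to almost sure convergence of the whole sequence I would invoke a stochastic Opial argument. The preceding derivation is valid for each fixed $\xsol\in\Argmin(\Phi)$, so choosing a countable dense subset $\ba{y_{j}}_{j\in\bbN}\subset\Argmin(\Phi)$, the countable intersection of the almost sure events ``$\norm{\xk-y_{j}}$ converges'' remains almost sure, and on that event $\norm{\xk-y}$ converges for every $y\in\Argmin(\Phi)$. Combined with every cluster point lying in $\Argmin(\Phi)$, Opial's lemma pins down a single random limit $\xsol\in\Argmin(\Phi)$, and $\Phi(\xk)\to\Phi(\xsol)$ then follows from the summability already established.

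The main obstacle I anticipate is the absence of strong convexity: the only decrement in the Lyapunov recursion is through the function-value gap $\Phi(\xk)-\Phi(\xsol)$, not through $\norm{\xk-\xsol}^{2}$, so neither a contraction nor a deterministic Fej\'er argument alone suffices. Bridging a.s.\ summability of the function gap to full sequence convergence therefore forces Robbins--Siegmund to be combined with stochastic Opial, alongside careful bookkeeping of the SAGA memory variance. A secondary subtlety is securing pathwise (rather than merely in-expectation) convergence of $\epsksaga$, which hinges on $\xk$ and every memory entry $g_{k,i}$ stabilising on the same probability-one event; this is handled precisely because the memory sum is built into $T_{k}$ and therefore converges a.s. alongside everything else.
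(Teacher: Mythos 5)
Your proposal is correct in outline and follows essentially the same route as the paper's proof: a SAGA Lyapunov function that is a supermartingale up to a multiple of the function-value gap, the supermartingale/Robbins--Siegmund theorem (the paper's Lemma \ref{lem:supermartingale}) to obtain a.s.\ convergence of the Lyapunov value together with a.s.\ summability of $\Phi(\xk)-\Phi(\xsol)$, a separate recursion driving the memory term to zero so that $\norm{\xk-\xsol}$ itself converges, and the countable-dense-subset Opial argument to identify a single random limit. The only real structural difference is the choice of memory term: you carry $\sfrac{c\gamma^2}{m}\sum_{i}\norm{g_{k,i}-\nabla f_i(\xsol)}^2$, whereas the paper keeps the Bregman form $\sfrac{1}{m}\sum_i\pa{f_i(\phi_{k,i})-f_i(\xsol)-\iprod{\nabla f_i(\xsol)}{\phi_{k,i}-\xsol}}$ from the original SAGA analysis and imports the one-step inequality \eqref{eq:calLk} directly from Appendix C of \cite{saga14}; the two are comparable via co-coercivity, but your version requires re-checking that the constants close at $\gamma=1/(3L)$, which you assert rather than verify.

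One loose end to tighten: both for $\epsksaga\to 0$ and for extracting convergence of $\norm{\xk-y_j}^2$ from convergence of $T_k$ in the Opial step, you need the memory term to converge \emph{to zero}, not merely to converge, and the phrase ``on a subsequence one extracts $g_{k,i}\to\nabla f_i(\bar{x})$'' does not by itself deliver full-sequence convergence of $\epsksaga$. This is repairable with ingredients you already wrote down: your memory recursion has the form $\EE\sp{M_{k+1}\mid\Ff_k}\leq\pa{1-\sfrac{1}{m}}M_k+O\pa{\Phi(\xk)-\Phi(\xsol)}$ with the second term a.s.\ summable, so applying the supermartingale lemma once more gives $\sum_k \sfrac{1}{m}M_k<+\infty$ a.s.\ and hence $M_k\to 0$ a.s.; this plays exactly the role of $\wk\to 0$ and \cite[Lemma 6]{saga14} in the paper's argument, after which the triangle inequality and Lipschitz continuity of the $\nabla f_i$ give $\epsksaga\to 0$ along the whole sequence.
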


Next we provide the convergence result of the Prox-SVRG algorithm, and mainly focus on ``Option I'' for which convergence without strong convexity can be obtained. For ``Option II'',  convergence of the sequence under strong convexity is discussed already in \cite{proxsvrg14}, hence we decide to skip the discussion here.

Given $\ell \in \bbN^+$ and $p \in \ba{1,\dotsm,P}$, denote $k = \ell P + p$, then $x_{\ell, p} = \xk$. For sequence $\seq{\xk}$, define
\[
\xbark \eqdef \qfrac{1}{k} \msum_{\ell=1}^k x_{\ell}  .
\]

\begin{theorem}[Convergence of Prox-SVRG]\label{thm:convergence-proxsvrg}
For problem \eqref{eq:min-Phi}, suppose that conditions \iref{A:R}-\iref{A:minimizers-nonempty} hold. Let $\seq{\xk}$ be the sequence generated by the Prox-SVRG algorithm \eqref{eq:prox-svrg} with ``Option I''. 
Then,
\begin{enumerate}[label={\rm (\roman{*})}]
\item
If we fix $\gamma_k \equiv \gamma$ with $\gamma \leq \frac{1}{ 4L  (P+2)}$, then there exists a minimiser $\xsol  \in \Argmin(\Phi)$ such that  $\xk \to \xsol$ and $\epsksvrg \to 0$ almost surely. Moreover, {there holds for each $k = \ell P$ with $\ell\in\NN$,}
\begin{equation}\label{eq:svrg-rate}
\EE \Sp{ \Phi (\xbark) -\Phi(\xsol ) }  
\leq \qfrac{1}{k\gamma^2} \Pa{ \norm{\tilde x_0-\xsol }^2+(2\gamma-\gamma^2)(\Phi(\tilde x_0) - \Phi(\xsol )) }.
\end{equation}
\item
Suppose that $R, F$ are moreover $\alpha_{R}$ and $\alpha_{F}$ strongly convex respectively, then if $4L\gamma(P+1)< 1$, there holds
\[
\bbE\Sp{ \norm{\txl-\xsol}^2 }
\leq \rhosvrg^\ell \bPa{ \norm{\tilde{x}_{0} - \xsol}^2 + \sfrac{2\gamma}{1+\gamma\alpha_{R}} \Pa{ \Phi(\tilde{x}_{0}) - \Phi(\xsol) } }  ,
\]
where $\rhosvrg = \max\ba{ \frac{1-\gamma\alpha_{F}}{1+\gamma\alpha_{R}}, 4L\gamma(P+1) }$.
\end{enumerate}
\end{theorem}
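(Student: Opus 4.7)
The natural strategy is to build a Lyapunov recursion on the outer epochs by first deriving a one-step inequality for the inner iterations. For a single inner step, I would start from the optimality condition characterising $x_{\ell,p} = \prox_{\gamma R}(x_{\ell,p-1} - \gamma v_{\ell,p})$, where $v_{\ell,p} \eqdef \nabla f_{i_p}(x_{\ell,p-1}) - \nabla f_{i_p}(\txl) + \tilde g_\ell$, combine it with the descent lemma for $F$ and convexity of $F$ and $R$, and read off, for any $\xsol \in \Argmin(\Phi)$,
\[
2\gamma \Pa{\Phi(x_{\ell,p})-\Phi(\xsol)} \leq \norm{x_{\ell,p-1}-\xsol}^2 - \norm{x_{\ell,p}-\xsol}^2 + 2\gamma\iprod{\Delta_{\ell,p}}{\xsol-x_{\ell,p}},
\]
where $\Delta_{\ell,p} \eqdef v_{\ell,p} - \nabla F(x_{\ell,p-1}) = \epsksvrg$. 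The second ingredient is the standard SVRG variance bound obtained via co-coercivity of each $\nabla f_i$, namely
\[
\EE_{i_p}\Sp{\norm{\Delta_{\ell,p}}^2} \leq 4L\Pa{\Phi(x_{\ell,p-1}) - \Phi(\xsol) + \Phi(\txl) - \Phi(\xsol)}.
\]

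For part (i), the plan is to combine these inequalities (splitting the inner product with a Young-type weight), take conditional expectation over $i_p$, and sum over $p=1,\dotsc,P$ so that the distance terms telescope across the inner loop. The step-size condition $\gamma \leq 1/(4L(P+2))$ is exactly tight enough to absorb the cross-terms generated by the variance bound into the descent, producing a schematic recursion
\[
\EE\Sp{\norm{\txlp-\xsol}^2 \mid \calF_\ell} + c\gamma^2 \msum_{p=1}^P \Pa{\EE\Sp{\Phi(x_{\ell,p})}-\Phi(\xsol)} \leq \norm{\txl-\xsol}^2 + (2\gamma-\gamma^2)\Pa{\Phi(\txl)-\Phi(\xsol)}.
\]
Summing over $\ell$, applying Jensen's inequality to the outer average $\xbark$, and carefully tracking the constants yields the $O(1/k)$ rate \eqref{eq:svrg-rate}. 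Almost sure convergence then follows by applying the Robbins--Siegmund supermartingale theorem to the same recursion: the summability conclusion $\sum_\ell (\Phi(\txl)-\Phi(\xsol)) < \infty$ a.s.\ forces $\Phi(\txl) \to \Phi(\xsol)$, while Fej\'er-monotone behaviour of $\norm{\txl-\xsol}^2$ together with an Opial-style argument pins down the limit of $\seq{\txl}$ (and hence of $\seq{\xk}$) to a single minimiser $\xsol$. The claim $\epsksvrg \to 0$ a.s.\ then drops out of the Lipschitz continuity of each $\nabla f_i$ once $\xk, \txl \to \xsol$ a.s.

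For part (ii), strong convexity of $R$ makes $\prox_{\gamma R}$ a $(1+\gamma\alpha_R)^{-1}$-contraction, while strong convexity of $F$ gives $\norm{x - \gamma\nabla F(x) - (\xsol - \gamma\nabla F(\xsol))}^2 \leq (1-\gamma\alpha_F)\norm{x-\xsol}^2$ at the fixed point (for small enough $\gamma$). Plugging these into the inner one-step inequality and reusing the variance bound yields the expectation recursion
\[
\EE\Sp{\norm{x_{\ell,p}-\xsol}^2} \leq \tfrac{1-\gamma\alpha_F}{1+\gamma\alpha_R} \EE\Sp{\norm{x_{\ell,p-1}-\xsol}^2} + \tfrac{\gamma^2}{(1+\gamma\alpha_R)^2}\EE\Sp{\norm{\Delta_{\ell,p}}^2}.
\]
Unrolling $P$ times, invoking the variance bound and collecting terms should produce the announced rate $\rhosvrg = \max\{(1-\gamma\alpha_F)/(1+\gamma\alpha_R),\, 4L\gamma(P+1)\}$ provided $4L\gamma(P+1)<1$. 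The hard part will be the careful tuning of the Young splits and the bookkeeping of the inner loop under Option I (where $\txlp = x_{\ell,P}$ rather than a running average of the inner iterates), so that the specific constants $(2\gamma-\gamma^2)$ in \eqref{eq:svrg-rate} and the precise form of $\rhosvrg$ emerge rather than slightly looser variants.
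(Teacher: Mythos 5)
Your proposal follows essentially the same route as the paper: the paper imports your one-step inequality and variance bound wholesale from the original Prox-SVRG analysis (its Lemma~A.2), then telescopes over the inner loop, applies the supermartingale convergence theorem, identifies a single limit via the countable-dense-subset/Opial argument from the SAGA proof, and obtains the ergodic rate via Jensen exactly as you describe, with part (ii) likewise following from summing the strongly convex version of the same lemma. The only point you elide is that convergence of $\seq{\txl}$ does not by itself give convergence of the inner iterates $x_{\ell,p}$ --- the paper runs a second supermartingale argument on $V_{\ell,p}\eqdef\norm{x_{\ell,p}-\xsol}^2+2\gamma(\Phi(x_{\ell,p})-\Phi(\xsol))$ for this --- but your one-step inequality already supplies what is needed there.
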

\begin{remark}
To the best of our knowledge, the $O(1/k)$ ergodic convergence rate of $\seq{ \EE \sp{ \Phi (\xbark) -\Phi(\xsol ) } }$ is a new contribution to the literature.
\end{remark}


\section{Finite manifold identification of SAGA/Prox-SVRG}
\label{sec:identification}

From this section, we turn to the local convergence properties of SAGA/Prox-SVRG algorithms. We first introduce the notion of partial smoothness, then present a general abstract finite manifold identification of the perturbed Forward--Backward splitting \eqref{eq:inexact-fbs}, and specialize the result to the case of SAGA and Prox-SVRG algorithms.

\subsection{Partial smoothness}

The concept \emph{partial smoothness} was first proposed in \cite{Lewis-PartlySmooth}, which captures the essential features of the geometry of non-smoothness along the so-called active/identifiable manifold. Loosely speaking, a partly smooth function behaves smoothly along the manifold, and sharply normal to the manifold.

Let $\Mx$ be a $C^2$-smooth Riemannian manifold of $\bbR^n$ around a point $x$. 
Denotes $\tanSp{\Mx}{x'}$ the tangent space of $\Mx$ at a point $x' \in \Mx$. 
Below we introduce the definition of partial smoothness for the class of proper convex and lower semi-continuous functions. 

\begin{definition}[Partly smooth function]\label{dfn:psf}
Let function $R: \bbR^n \to \bbR \cup \ba{+\infty}$ be proper convex and lower semi-continuous. 
Then $R$ is said to be {partly smooth at $x$ relative to a set $\Mx$} containing $x$ if the sub-differential $\partial R(x) \neq \emptyset$, and moreover
\begin{description}[leftmargin=3cm]
\item[{\textbf{Smoothness:}}] \label{PS:C2} 
$\Mx$ is a $C^2$-manifold around $x$, $R$ restricted to $\Mx$ is $C^2$ around $x$.
\item[{\textbf{Sharpness:}}] \label{PS:Sharp} 
The tangent space $\tanSp{\Mx}{x}$ coincides with $T_{x} \eqdef \LinHull\Pa{\partial R(x)}^\perp$.
\item[{\textbf{Continuity:}}] \label{PS:DiffCont} 
The set-valued mapping $\partial R$ is continuous at $x$ relative to $\Mx$.
\end{description}
\end{definition}

The class of partly smooth functions at $x$ relative to $\Mx$ is denoted as $\PSF{x}{\Mx}$. 
Many widely used non-smooth penalty functions in the literature are partly smooth, such as sparsity promoting $\ell_{1}$-norm, group sparsity promoting $\ell_{1,2}$-norm, low rank promoting nuclear norm, etc.; see Table \ref{tab:psf-example} for more information. 
We refer to \cite{liang2017activity} and the references therein for more details of partly smooth functions.

\begin{table}[!htb]
\centering \small \caption{Examples of partly smooth functions. For $x \in \bbR^n$ and some subset of indices $\pzcb \subset \ens{1,\ldots,n}$, $x_{\pzcb}$ is the restriction of $x$ to the entries indexed in $\pzcb$. $\TVop$ stands for the finite differences operator.}
\begin{TAB}(r,0.45cm,0.45cm)[2pt]{|c|c|c|}{|c|c|c|c|c|c|}
Function  & Expression & Partial smooth manifold  \\
$\ell_1$-norm & $\norm{x}_1=\sum_{i=1}^n \abs{x_i}$ & $\Mx = T_x =\enscond{z \in \RR^n}{\calI_z \subseteq \calI_x},~ \calI_x = \enscond{i}{x_{i} \neq 0 }$   \\
$\ell_{1,2}$-norm & $\sum_{i=1}^m \norm{x_{\pzcb_i}}$ & $\Mx = T_x = \enscond{z \in \RR^n}{\calI_z \subseteq \calI_x} ,~ \calI_x = \enscond{i}{x_{\pzcb_i} \neq 0 }$~  \\
$\ell_\infty$-norm & $\max_{i=\{1,\ldots,n\}}\abs{x_i}$& $\Mx = T_x =\enscond{z \in \RR^n}{z_{\calI_x} \in \bbR \sign(x_{\calI_x})},~ \calI_x = \enscond{i}{\abs{x_i} = \norm{x}_\infty}$  \\
TV semi-norm & $\TVnorm{x}=\norm{\TVop x}_1$ & $\Mx = T_x = \enscond{z \in \bbR^n}{\calI_{\TVop z} \subseteq \calI_{\TVop x}},~ \calI_{\TVop x} = \ba{i: \pa{\TVop x}_{i} \neq 0 }$  \\
Nuclear norm & $\norm{x}_*=\sum_{i=1}^r \sigma(x)$ & $~~\Mx = \enscond{z \in \bbR^{n_1 \times n_2}}{\rank(z) = \rank(x) = r}$, $\sigma(x)$ singular values of $x$~~  \\
\end{TAB}
\label{tab:psf-example}
\end{table}

\subsection{An abstract finite manifold identification}

Recall the perturbed Forward--Backward splitting iteration
\beqn
\xkp = \prox_{\gamma_k R} \Pa{\xk - \gamma_k ( \nabla F(\xk) + \epsk) }  .
\eeqn
As discussed, the difference of stochastic optimisation methods in terms of perturbed Forward--Backward splitting is that each method has its own form of the perturbation error $\epsk$ (\eg $\epsksgd$ in \eqref{eq:error-psgd}, $\epsksaga$ in \eqref{eq:error-saga} and $\epsksvrg$ in~\eqref{eq:error-proxsvrg}).  
We have the following abstract identification result for the perturbed Forward--Backward iteration.

\begin{theorem}[Abstract finite manifold identification]\label{thm:abstract-identification}
For problem \eqref{eq:min-Phi}, suppose that conditions \iref{A:R}-\iref{A:minimizers-nonempty} hold. 
For the perturbed Forward--Backward splitting iteration \eqref{eq:inexact-fbs}, suppose that:
\begin{enumerate}[leftmargin=4.5em,label= {\rm (\textbf{B.\arabic{*}})},ref= {\rm \textbf{B.\arabic{*}}}]
\item \label{B:gamma-geq-eps}
There exists $\ugamma > 0$ such that $\liminf_{k\to+\infty} \gamma_{k} \geq \ugamma$;
\item \label{B:epsk-converge}
The perturbation error $\seq{\epsk}$ converges to $0$ almost surely;
\item \label{B:xk-converge}
There exists an $\xsol \in \Argmin(\Phi)$ such that $\seq{\xk}$ converges to $\xsol$ almost surely.
\end{enumerate}
For the $\xsol$ in \iref{B:xk-converge}, suppose that $R \in \PSF{\xsol}{\Msol}$, and the following non-degeneracy condition \eqref{eq:cnd-nd} 
holds. 
Then, there exists a $K>0$ such that for all $k\geq K$, we have $\xk \in \Msol$ almost surely.
\end{theorem}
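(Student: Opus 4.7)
The strategy is to reduce the statement to the standard deterministic partial smoothness identification criterion (in the style of Hare--Lewis, or \cite{liang2017activity}): if $R \in \PSF{\xsol}{\Msol}$, $x_k \to \xsol$, $R(x_k) \to R(\xsol)$, and there exist $v_k \in \partial R(x_k)$ with $v_k \to \bar v \in \ri(\partial R(\xsol))$, then $x_k \in \Msol$ for all $k$ large. Under \iref{B:epsk-converge} and \iref{B:xk-converge} the events $\ba{x_k \to \xsol}$ and $\ba{\epsk \to 0}$ both have probability one, hence so does their intersection. I will fix a sample path in this full-measure event and carry out the rest of the argument pathwise.

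\paragraph{Extracting a subgradient sequence.}
From the definition of $\prox_{\gamma_k R}$, the iterate $\xkp$ satisfies the optimality relation
\[
v_{k+1} \eqdef -\frac{\xkp - \xk}{\gamma_k} - \nabla F(\xk) - \epsk \;\in\; \partial R(\xkp).
\]
The plan is to show $v_{k+1} \to -\nabla F(\xsol)$. First, $\nabla F(\xk) \to \nabla F(\xsol)$ by continuity of $\nabla F$ and \iref{B:xk-converge}; second, $\epsk \to 0$ by \iref{B:epsk-converge}; and third, $\xkp - \xk \to 0$ because both $\xk$ and $\xkp$ converge to $\xsol$, while $\gamma_k$ is bounded away from $0$ by \iref{B:gamma-geq-eps}. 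Combining these gives $v_{k+1} \to -\nabla F(\xsol)$, and the non-degeneracy condition \eqref{eq:cnd-nd} places the limit in $\ri(\partial R(\xsol))$.

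\paragraph{Continuity of $R$ along the iterates.}
To invoke the identification lemma I also need $R(\xk) \to R(\xsol)$. Lower semicontinuity gives $\liminf_k R(\xk) \ge R(\xsol)$. For the matching upper bound, I will use the fact that $\xkp$ minimises $R(y)+\tfrac{1}{2\gamma_k}\norm{y-\xk+\gamma_k(\nabla F(\xk)+\epsk)}^2$, and compare its value to the candidate $y=\xsol$:
\[
R(\xkp) + \tfrac{1}{2\gamma_k}\bnorm{\xkp-\xk+\gamma_k(\nabla F(\xk)+\epsk)}^2
\;\le\;
R(\xsol) + \tfrac{1}{2\gamma_k}\bnorm{\xsol-\xk+\gamma_k(\nabla F(\xk)+\epsk)}^2.
\]
As $k\to\infty$, the right-hand quadratic term tends to $\tfrac{\gamma}{2}\norm{\nabla F(\xsol)}^2$ for any accumulation $\gamma$ of $\gamma_k$, and an analogous bound from below shows the difference of quadratic terms vanishes, so $\limsup_k R(\xkp)\le R(\xsol)$.

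\paragraph{Conclusion and main obstacle.}
With $\xkp \to \xsol$, $R(\xkp)\to R(\xsol)$, and $v_{k+1}\in\partial R(\xkp)$ converging to a point in $\ri(\partial R(\xsol))$, the deterministic identification lemma (which uses sharpness and the continuity clause of Definition~\ref{dfn:psf} together with a contradiction argument against the transversality that would be forced at points outside $\Msol$) yields some deterministic $K$ for this sample path such that $\xk \in \Msol$ for $k \ge K$. Since this holds on an event of probability one, we obtain the almost sure conclusion. The principal technical obstacle is the decoupling of the stochastic perturbation from the identification mechanism: this is handled by the observation that all identification hypotheses are pathwise properties of the iterates, so once one works on the intersection of the two almost sure events in \iref{B:epsk-converge}--\iref{B:xk-converge}, the stochasticity plays no further role and the classical deterministic criterion applies verbatim.
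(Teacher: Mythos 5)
Your proposal is correct and follows essentially the same route as the paper: extract $v_{k+1}\in\partial R(\xkp)$ from the prox optimality condition, show $v_{k+1}\to-\nabla F(\xsol)\in\ri(\partial R(\xsol))$ using \iref{B:gamma-geq-eps}--\iref{B:xk-converge}, and invoke the Hare--Lewis identification theorem pathwise on the full-measure event. The only (harmless) deviation is that you establish $R(\xk)\to R(\xsol)$ by hand via the prox minimisation property, whereas the paper simply notes that a proper convex lsc $R$ is sub-differentially continuous on its domain.
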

\begin{remark}\label{rmk:remark-identification}
     $~$  
\begin{enumerate}[label={\rm (\roman{*})}]
\item
In the deterministic setting, the finite manifold identification property of \eqref{eq:inexact-fbs}, \ie $\epsk$ is not random error, is discussed in \cite[Section~3.3]{liang2017activity}.  
\item
From the convergence proof of Theorem \ref{thm:abstract-identification}, it can be observed that condition \iref{B:gamma-geq-eps} can be relaxed if we have 
\[
\lim_{k\to+\infty} \sfrac{1}{\gammak} \norm{\xk-\xkp} = 0 
\]
holds almost surely, which means that $\EE\sp{ \norm{\xk-\xkp} } = o(\gamma_{k})$. 
\item
In Theorem \ref{thm:abstract-identification}, we only mention the existence of $K$ after which the manifold identification happens and no estimation is provided. 
In \cite{liang2017activity} for the deterministic Forward--Backward splitting method, a lower bound of $K$ is derived, though not very interesting from practical point of view. 
However, for the stochastic methods (\eg SAGA and Prox-SVRG), even providing a lower bound for $K$ is a challenging problem. 
More importantly, to provide a bound (either lower or upper) for $K$, $\xsol$ has to be involved; see \cite[Proposition 3.6]{liang2017activity}.  As a consequence, we decide to skip the discussion here.
\end{enumerate}
\end{remark}
\begin{proof}

First of all, the definition of proximity operator \eqref{eq:proximity-opt} and the update of $\xkp$ \eqref{eq:inexact-fbs} entail that  
\beq\label{eq:opt-cnd-xkp}
\qfrac{\xk - \xkp}{\gammak} - \nabla F(\xk) - \epsk \in \partial R (\xkp) ,
\eeq
from which we get
\[
\begin{aligned}
\dist\Pa{-\nabla F(\xsol), \partial R(\xkp)}  
&\leq \norm{\sfrac{1}{\gammak}\Pa{\xk - \xkp} - \nabla F(\xk) - \epsk + \nabla F(\xsol)} \\
&\leq \sfrac{1}{\gammak} \norm{\xk-\xkp} + \norm{\nabla F(\xk) - \nabla F(\xsol)} + \norm{\epsk} \\
&\leq \sfrac{1}{\ugamma} \norm{\xkp-\xk} + L_{F} \norm{\xk - \xsol} + \norm{\epsk} ,
\end{aligned}
\]
where lower boundedness of $\gammak$ and the $L_{F}$-Lipschitz continuity of $\nabla F$ (see assumption \iref{A:F}) is applied to get the last inequality. 
We have:
\begin{itemize}
\item 
The almost sure convergence of $\seq{\xk}$ (condition \iref{B:xk-converge}) ensures that $L_{F} \norm{\xk - \xsol}$ converges to $0$ almost surely. Owing to assumption \iref{A:R}, $R$ is sub-differentially continuous at all the points of its domain, typically at $\xsol$ for $-\nabla F(\xsol)$, hence we have $R(\xk) \to R(\xsol)$ almost surely;
\item
Combine the almost sure convergence of $\seq{\xk}$ and \iref{B:gamma-geq-eps} the bounded from below property of $\seq{\gammak}$, we have that $\sfrac{1}{\ugamma} \norm{\xkp-\xk}$ converges to $0$ almost surely;
\item
Condition \iref{B:epsk-converge} asserts that $\norm{\epsk} \to 0$ almost surely.
\end{itemize}
Altogether, we have that
\[
    \dist\Pa{-\nabla F(\xsol), \partial R(\xkp)}  \to 0 ~~~\textrm{almost~surely}  .
\]
To this end, all the conditions of \cite[Theorem~5.3]{hare2004identifying} are fulfilled almost surely on function $\iprod{\nabla F(\xsol)}{\cdot}+R$, hence the identification result follows.
\end{proof}

\subsection{Finite manifold identification of SAGA/Prox-SVRG}

Now we specialise Theorem \ref{thm:abstract-identification} to the case of SAGA/Prox-SVRG algorithms, which yields the proposition below.
For Prox-SVRG, recall that in the convergence proof, we denote the inner iteration sequence $x_{\ell, p}$ as $\xk$ with $k = \ell P + p$. It follows directly from Theorem \ref{thm:convergence-saga} and Theorem \ref{thm:convergence-proxsvrg} that the conditions of Theorem \ref{thm:abstract-identification} are satisfied. Therefore, we have the following result.

\begin{corollary}\label{cor:identification}
For problem \eqref{eq:min-Phi}, suppose that conditions \iref{A:R}-\iref{A:minimizers-nonempty} hold. Suppose that
\begin{itemize}
\item 
SAGA is applied under the conditions of Theorem \ref{thm:convergence-saga}; 
\item
Prox-SVRG is applied under the conditions of Theorem \ref{thm:convergence-proxsvrg}. 
\end{itemize}
Then there exists an $\xsol \in \Argmin\pa{\Phi}$ such that the sequence $\seq{\xk}$ generated by either algorithm converges to $\xsol$ almost surely. 

If moreover, $R \in \PSF{\xsol}{\Msol}$, and the non-degeneracy condition \eqref{eq:cnd-nd} holds. 
Then, there exists a $K>0$ such that for all $k\geq K$, $\xk \in \Msol$ almost surely.
\end{corollary}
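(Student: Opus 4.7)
The plan is to reduce Corollary~\ref{cor:identification} to Theorem~\ref{thm:abstract-identification} by verifying, for each of the two algorithms, the three conditions \iref{B:gamma-geq-eps}, \iref{B:epsk-converge}, \iref{B:xk-converge} that drive the abstract identification result. All the analytic work has essentially been packaged into Theorems~\ref{thm:convergence-saga} and~\ref{thm:convergence-proxsvrg}; the corollary is thus a matter of re-indexing (for Prox-SVRG) and of quoting the right pieces of those statements.

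First I would treat SAGA. The step-size is held constant at $\gamma_k \equiv \gamma = 1/(3L)>0$, so \iref{B:gamma-geq-eps} is immediate with $\ugamma = \gamma$. Theorem~\ref{thm:convergence-saga} delivers both remaining ingredients in one shot: it produces an $\xsol \in \Argmin(\Phi)$ for which $\xk \to \xsol$ almost surely (giving \iref{B:xk-converge}), and it asserts $\epsksaga \to 0$ almost surely (giving \iref{B:epsk-converge}). Since the SAGA recursion \eqref{eq:saga} is precisely the instance of the perturbed Forward--Backward iteration \eqref{eq:inexact-fbs} with error \eqref{eq:error-saga}, Theorem~\ref{thm:abstract-identification} applies and yields a $K$ (a.s.\ finite) after which $\xk \in \Msol$.

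Next I would treat Prox-SVRG. The only subtlety is the double-index $(\ell,p)$: I would re-index the inner iterates via $k = \ell P + p$ and set $\xk \eqdef x_{\ell,p}$, so that with the identification \eqref{eq:error-proxsvrg} for $\epsksvrg$, scheme \eqref{eq:prox-svrg} becomes exactly \eqref{eq:inexact-fbs}. Under the hypotheses of Theorem~\ref{thm:convergence-proxsvrg} (``Option~I'', $\gamma_k \equiv \gamma$ with $\gamma \leq 1/(4L(P+2))$), condition \iref{B:gamma-geq-eps} again holds trivially with $\ugamma = \gamma$, while the statement of Theorem~\ref{thm:convergence-proxsvrg}(i) provides an $\xsol \in \Argmin(\Phi)$ with $\xk \to \xsol$ and $\epsksvrg \to 0$ almost surely, covering \iref{B:xk-converge} and \iref{B:epsk-converge}. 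Invoking Theorem~\ref{thm:abstract-identification} under the additional assumptions $R \in \PSF{\xsol}{\Msol}$ and \eqref{eq:cnd-nd} then gives the desired finite manifold identification.

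There is no real obstacle here beyond bookkeeping: the only thing that requires care is making sure that the re-indexing $k = \ell P + p$ for Prox-SVRG is consistent with the sense in which convergence and the vanishing of the error are asserted in Theorem~\ref{thm:convergence-proxsvrg}, so that one can legitimately feed the sequence $\seq{\xk}$ (rather than just the outer sequence $\seq{\txl}$) into Theorem~\ref{thm:abstract-identification}. Once that identification is made, the corollary follows in one line from the abstract result.
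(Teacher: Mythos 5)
Your proposal is correct and follows exactly the paper's route: the paper likewise dispatches the corollary by noting that Theorems~\ref{thm:convergence-saga} and~\ref{thm:convergence-proxsvrg} supply conditions \iref{B:gamma-geq-eps}--\iref{B:xk-converge} of Theorem~\ref{thm:abstract-identification} (with the same re-indexing $k=\ell P+p$ for the Prox-SVRG inner iterates). Nothing is missing.
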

\begin{remark}
For the Prox-SVRG algorithm, since in Theorem \ref{thm:convergence-proxsvrg} the convergence is obtained for ``Option I'', hence the sequence $\ba{\txl}_{\ell\in\bbN}$ also has finite manifold identification property. 

{\parindent12pt The situation however becomes complicated if ``Option II'' is applied. Suppose we have the convergence of the sequence generated by Prox-SVRG, the identification property of $\ba{x_{\ell, p}}_{p=1,\dotsm,P,~ \ell\in\bbN}$ is straightforward. 
However, for sequence $\ba{\txl}_{\ell\in\bbN}$, unless $\Msol$ is convex locally around $\xsol$, in general there is no identification guarantee for it. 
A typical example for which this is problematic is the nuclear norm, whose associated partial smooth manifold is a non-convex cone, hence there can be no identification result for the outer loop sequence~$\ba{\txl}_{\ell\in\bbN}$. } 
\end{remark}

\subsection{When non-degeneracy condition fails}

In Theorem \ref{thm:abstract-identification}, besides the partial smoothness assumption of $R$, the non-degeneracy condition \eqref{eq:cnd-nd} is crucial to the identification of the sequence $\seq{\xk}$. 
Owing to the result of \cite{LewisPartlyTiltHessian,hare2004identifying,Hare-Lewis-Algo}, it is a necessary condition for identification of the manifold $\Msol$, and moreover ensures that the manifold $\Msol$ is minimal and unique. 

Recently, efforts are made to relax the non-degeneracy condition. 
In \cite{fadili2017sensitivity}, under a so-called ``mirror stratification condition'', the authors manage to relax the non-degeneracy condition, however at the price that the manifold to be identified is no longer unique. 
More precisely, there will be another manifold $\overline{\calM}_{\xsol}$, which includes $\Msol$ and is determined by how \eqref{eq:cnd-nd} is violated. The sequence $\seq{\xk}$ will identify a manifold $\widetilde{\calM}_{\xsol}$ such that
\[
\Msol \subseteq \widetilde{\calM}_{\xsol} \subseteq \overline{\calM}_{\xsol}  .
\]
Furthermore, the identification of $\seq{\xk}$ could be unstable, that is $\seq{\xk}$ may identify several different manifolds which are between $\Msol$ and $\overline{\calM}_{\xsol}$.

\paragraph{A degenerate LASSO problem}
We present a simple example of LASSO problem to demonstrate the unstable identification behaviour of $\seq{\xk}$ when the non-degeneracy conditions fails. 
Consider the problem
\beq\label{eq:lasso-dctmtx}
\min_{x\in\bbR^n}  \mu \norm{x}_{1} + \qfrac{1}{2} \norm{\calK x - b}^2 ,
\eeq
where $\mu > 0$ is the penalty parameter, $\calK \in \bbR^{n\times n}$ is a unitary matrix, and $b \in \bbR^n$ is a vector.

\begin{figure}[!ht]
\centering
\subfloat[$\xsol$ and $(\calK^T b - \xsol)/\mu$]{ \includegraphics[width=0.45\linewidth]{./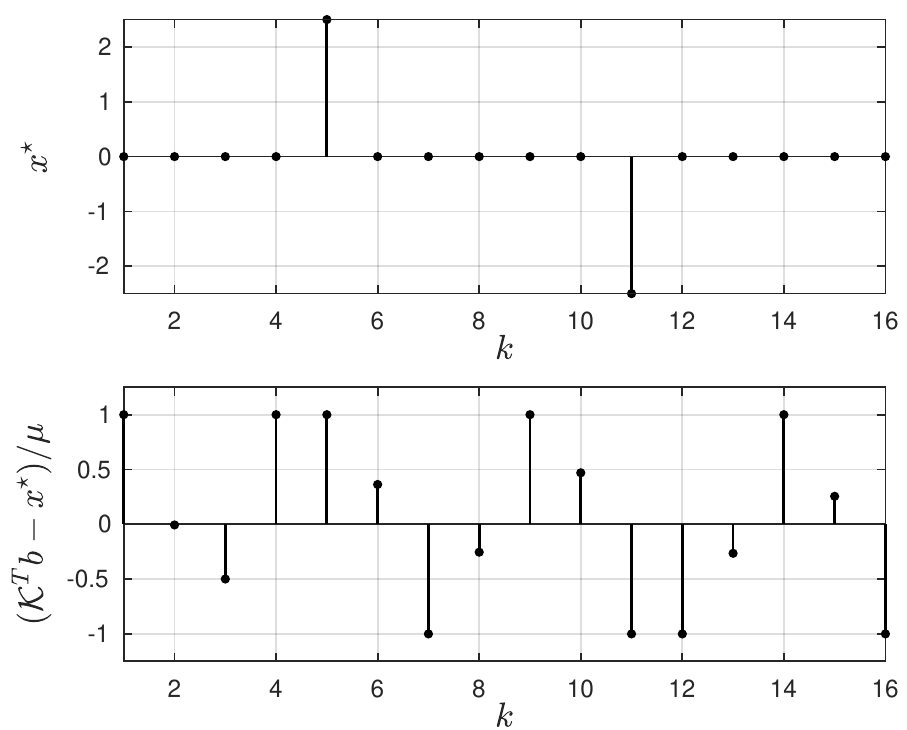} }  \hspace{2pt}
\subfloat[$\abs{\supp(\xk)}$ under different starting point]{ \includegraphics[width=0.45\linewidth]{./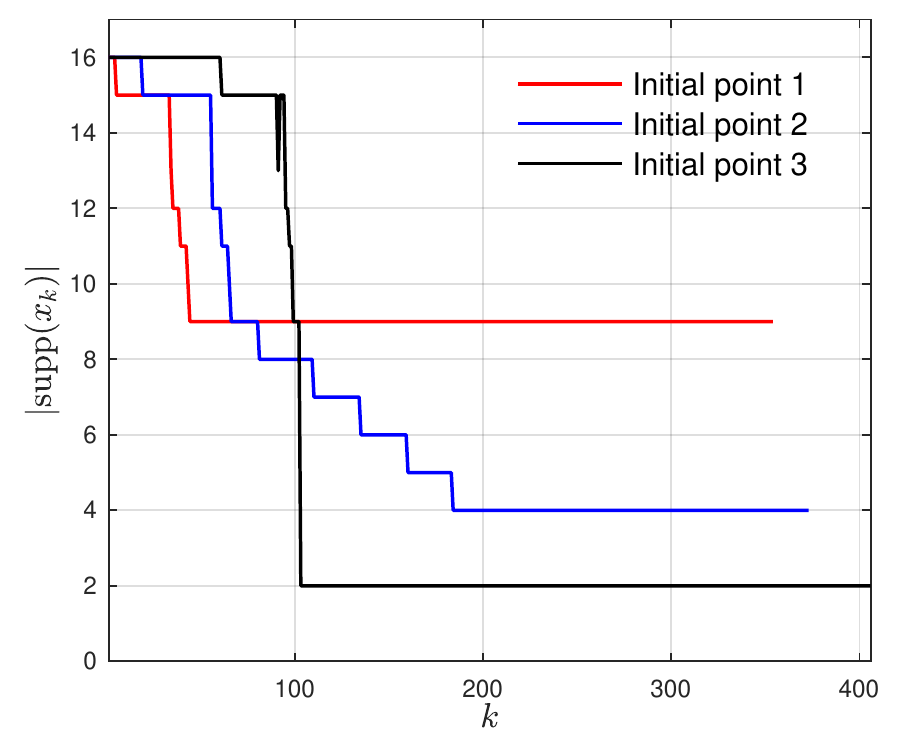} }  
\caption{Identification properties of deterministic Forward--Backward splitting method when the non-degeneracy condition \eqref{eq:cnd-nd} fails.
}
\label{fig:ndc-fails}
\end{figure}

Since $\calK$ is a unitary matrix, the solution of \eqref{eq:lasso-dctmtx} is unique and can be given explicitly, which is 
\beq\label{eq:solution-xsol}
\xsol = \sign(\calK^T b) \odot \max\Ba{ \abs{\calK^T b}-\mu, 0 } ,  
\eeq
and $\odot$ denotes point-wise product. Moreover, we have the gradient at $\xsol$
\[
- \nabla \Ppa{ \qfrac{1}{2} \norm{\calK \xsol - b}^2 } = -\calK^T (\calK \xsol - b) = \calK^T b - \xsol  .
\]
In the experiments, we set $\mu = 0.5$ and $n = 16$, and moreover the vector $b$ is designed such that the non-degeneracy condition~\eqref{eq:cnd-nd} is violated. 
The two vectors $\xsol$ and $\calK^T b - \xsol$ are shown in Figure \ref{fig:ndc-fails}(a), and it can be observed that $\xsol$ has only \emph{two} non-zero elements, while $\calK^T b - \xsol$ has \emph{nine} saturated elements (the saturation means that the absolute value of corresponding element is equal to $\mu$).

Though the solution $\xsol$ can be provided in closed form \eqref{eq:solution-xsol}, we choose to solve \eqref{eq:lasso-dctmtx} with deterministic Forward--Backward splitting with fixed step-size $\gamma = 0.05$, which is the following iteration
\beq\label{eq:ista}
\xkp = \sign(\wk) \odot \max\Ba{ \abs{\wk}-\gamma\mu, 0 }  ~~~~\textrm{where}~~~~ \wk = { (1 - \gamma) \xk -  \calK^T b }  .
\eeq
Three different initial points for \eqref{eq:ista} are considered. 
For each starting point, the size of support of the sequence $\seq{\xk}$, \ie $\seq{\abs{\supp\pa{\xk}}}$, is plotted in Figure \ref{fig:ndc-fails}(b). For all three cases, the iterations are ran until machine accuracy is reached. We obtain the following observations from the comparisons:
\begin{itemize}
\item
``Initial point 1'' and ``Initial point 2'' are unable to identify the support of the solution $\xsol$; 
\item
``Initial point 1'' identifies the largest manifold, \ie $\overline{\calM}_{\xsol}$. For ``Initial point 2'', the identification is not stable in the early iterations (\eg $k \leq 190$) compared to the other cases, and eventually (\eg $k \geq 190$) stabilises onto a manifold $\widetilde{\calM}_{\xsol}$ with $\Msol \subset \widetilde{\calM}_{\xsol} \subset \overline{\calM}_{\xsol}$;
\item
``Initial point 3'' manages to identify the smallest manifold, \ie $\Msol$. 
\end{itemize}
We can conclude that the starting point is very crucial when the non-degeneracy condition \eqref{eq:cnd-nd} fails.

\section{Local linear convergence of SAGA/Prox-SVRG} 
\label{sec:local-rate}

Now we turn to the  local linear convergence properties of SAGA/Prox-SVRG algorithms, the contents of this section consist of three main parts: local linear convergence of SAGA/Prox-SVRG, tightness of the rate estimation and more importantly acceleration techniques for these methods. 

Throughout the section, $\xsol \in \Argmin(\Phi)$ denotes a global minimiser \eqref{eq:min-Phi}, $\Msol$ is a $C^2$-smooth manifold which contains $\xsol$, and $\Tsol$ denotes the tangent space of $\Msol$ at $\xsol$.

\subsection{Local linear convergence}

Similar to the result in \cite{liang2017activity} for the deterministic FBS-type methods, the key assumption to establish local linear convergence for SAGA/Prox-SVRG is a so-called {restricted injectivity} condition defined below.

\paragraph*{Restricted injectivity}
Let $F$ be locally $C^2$-smooth around the minimiser $\xsol$, and moreover the following {restricted injectivity} condition holds
\beq\label{eq:cnd-ri}\tag{RI}
\ker\Pa{\nabla^2 F(\xsol)} \cap T_{\xsol} = \ba{0}  .
\eeq

Owing to the local continuity of the Hessian of $F$, condition \eqref{eq:cnd-ri} implies that there exist $\alpha > 0$ and $r > 0$ such that
\beqn
\iprod{h}{\nabla^2 F(x) h} \geq \alpha \norm{h}^2,~~ \forall h \in \Tsol  ,~ \forall x~~~\st~~\norm{x-\xsol} \leq r  .
\eeqn
In \cite[Proposition~12]{liang2017activity}, it is shown that under the above condition, $\xsol$ actually is the unique minimiser of problem~\eqref{eq:min-Phi}, and $\Phi$ grows locally quadratic if moreover $R \in \PSF{\xsol}{\Msol}$.

\begin{lemma}[Local quadratic growth \cite{liang2017activity}]\label{lem:quadratic-growth}
For problem \eqref{eq:min-Phi}, suppose that assumptions \iref{A:R}-\iref{A:minimizers-nonempty} hold. Let $\xsol \in \Argmin(\Phi)$ be a global minimiser such that conditions \eqref{eq:cnd-nd} and \eqref{eq:cnd-ri} are fulfilled and $R \in \PSF{\xsol}{\Msol}$, then $\xsol$ is the unique minimiser of \eqref{eq:min-Phi} and there exist $\alpha > 0$ and $r > 0$ such that
\[
\Phi(x) - \Phi(\xsol) 
\geq \alpha \norm{x - \xsol}^2 : \forall x ~~\st~~ \norm{x-\xsol} \leq r .
\]
\end{lemma}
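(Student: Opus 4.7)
The plan is to split the displacement $d \eqdef x - \xsol$ into its tangent and normal components $d_{T} \eqdef \proj_{\Tsol}(d)$ and $d_{N} \eqdef (\Id - \proj_{\Tsol})(d)$; by the sharpness property of partial smoothness we have $\Tsol^{\perp} = \LinHull(\partial R(\xsol))$. I would control the $F$-contribution to $\Phi(x) - \Phi(\xsol)$ along the tangent direction via the restricted injectivity condition \eqref{eq:cnd-ri}, control the $R$-contribution along the normal direction via the non-degeneracy condition \eqref{eq:cnd-nd}, and then assemble a local quadratic lower bound. Uniqueness of $\xsol$ as minimiser will then follow from this strict local growth together with convexity of $\Phi$.

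For the $F$-part, a second-order Taylor expansion around $\xsol$, using the local $C^{2}$ regularity of $F$, gives
\[
F(x) - F(\xsol) - \iprod{\nabla F(\xsol)}{d} = \tfrac{1}{2}\iprod{d}{\nabla^{2} F(\xsol)\, d} + o(\norm{d}^{2}).
\]
Combined with continuity of $\nabla^{2} F$, condition \eqref{eq:cnd-ri} yields a constant $\beta > 0$ such that $\iprod{d_{T}}{\nabla^{2} F(\xsol)\, d_{T}} \geq 2\beta \norm{d_{T}}^{2}$ whenever $\norm{d}$ is small enough. The mixed term $\iprod{d_{T}}{\nabla^{2} F(\xsol)\, d_{N}}$ is bounded by Cauchy--Schwarz and Young's inequality at the cost of introducing $-C\norm{d_{N}}^{2}$, which I will absorb below.

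For the $R$-part, I would invoke the convex sub-gradient inequality $R(x) \geq R(\xsol) + \iprod{\eta}{d}$ for every $\eta \in \partial R(\xsol)$. The key observation is that by sharpness $\LinHull(\partial R(\xsol)) = \Tsol^{\perp}$, and by \eqref{eq:cnd-nd} there exists $\delta > 0$ such that $-\nabla F(\xsol) + \delta u \in \partial R(\xsol)$ for every $u \in \Tsol^{\perp}$ with $\norm{u} \leq 1$. Specialising to $u = d_{N}/\norm{d_{N}}$ (when $d_{N} \neq 0$) produces linear --- rather than merely non-negative --- growth normal to $\Msol$:
\[
R(x) - R(\xsol) + \iprod{\nabla F(\xsol)}{d} \geq \delta \norm{d_{N}}.
\]

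Adding the two estimates yields, for $\norm{d}$ small enough,
\[
\Phi(x) - \Phi(\xsol) \geq \beta \norm{d_{T}}^{2} + \delta \norm{d_{N}} - C \norm{d_{N}}^{2} + o(\norm{d}^{2}).
\]
The one nontrivial step, and the main obstacle, is absorbing the $-C\norm{d_{N}}^{2}$ contribution and the $o(\norm{d}^{2})$ remainder into the linear $\delta \norm{d_{N}}$ term. This is possible because for $\norm{d} \leq r$ small one has $\delta \norm{d_{N}} \geq (\delta/r)\norm{d_{N}}^{2}$, so after shrinking $r$ the right-hand side is bounded below by some $\alpha(\norm{d_{T}}^{2} + \norm{d_{N}}^{2}) = \alpha\norm{d}^{2}$. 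For uniqueness, if some $x^{\dagger} \in \Argmin(\Phi) \setminus \ba{\xsol}$ existed, then convexity of $\Phi$ would force every point of the segment $[\xsol,x^{\dagger}]$ to be a minimiser, contradicting the strict quadratic growth just established in a neighbourhood of $\xsol$.
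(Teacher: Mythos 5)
Your argument is correct. Note first that the paper does not actually prove this lemma: it is imported verbatim from \cite[Proposition~12]{liang2017activity}, so there is no in-text proof to compare against line by line. Your self-contained derivation follows the same strategy as the cited source: decompose $d=x-\xsol$ into $d_T\in\Tsol$ and $d_N\in\Tsol^\perp$, get quadratic coercivity in the tangent direction from \eqref{eq:cnd-ri} via a second-order Taylor expansion of $F$, and get \emph{linear} growth in the normal direction from \eqref{eq:cnd-nd}. The two steps you should be explicit about, and which you do handle correctly, are: (i) the sharpness property gives $\LinHull(\partial R(\xsol))=\Tsol^\perp$, so $\Aff(\partial R(\xsol))=-\nabla F(\xsol)+\Tsol^\perp$, and the relative interior in \eqref{eq:cnd-nd} then really does supply a full $\delta$-ball of subgradients of the form $-\nabla F(\xsol)+\delta u$ with $u\in\Tsol^\perp$, $\norm{u}\le 1$ --- this is exactly where non-degeneracy is converted into the term $\delta\norm{d_N}$; (ii) the absorption $\delta\norm{d_N}-C\norm{d_N}^2\ge \tfrac{\delta}{2}\norm{d_N}\ge \tfrac{\delta}{2r}\norm{d_N}^2$ after shrinking $r$, which is the step that turns linear normal growth into the claimed quadratic bound. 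One small point worth flagging: both your tangential coercivity (positivity of the quadratic form on $\Tsol\setminus\{0\}$ from trivial kernel intersection) and your discarding of $\iprod{d_N}{\nabla^2F(\xsol)d_N}$ use positive semidefiniteness of $\nabla^2 F(\xsol)$, i.e.\ convexity of $F$; this is consistent with the paper's standing (implicit) convexity assumption, but it is not literally contained in \iref{A:F} and should be stated. The uniqueness argument via convexity of $\Argmin(\Phi)$ and strict growth on the segment is fine.
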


\begin{remark}
A similar result can also be found in \cite[Theorem 5]{lee2012manifold}. 
\end{remark}

The local quadratic growth, implies that when a sequence convergent stochastic method is applied, and moreover the conditions of Lemma \ref{lem:quadratic-growth} are satisfied. 
Eventually, the method will enter a local neighbourhood of the solution $\xsol$ where the function has the quadratic growth property. If moreover the method is linearly convergent under strong convexity, then locally it will also converge linearly under quadratic growth. 
As a consequence, we have the following propositions for SAGA and Prox-SVRG respectively.

\begin{proposition}[Local linear convergence of SAGA]\label{prop:linear-rate-saga}
For problem \eqref{eq:min-Phi}, suppose that conditions \iref{A:R}-\iref{A:minimizers-nonempty} hold, and the SAGA algorithm \eqref{eq:saga} is applied with $\gamma_k \equiv \gamma = 1/(3L)$. Then $\xk$ converges to $\xsol \in \Argmin(\Phi)$ almost surely. 
If moreover, $R \in \PSF{\xsol}{\Msol}$, and conditions \eqref{eq:cnd-nd}-\eqref{eq:cnd-ri} are satisfied. Then there exists $K >0$ such that for all $k \geq K$, 
\beqn
\bbE\Sp{\norm{\xk - \xsol}^2} =  O( \rhosaga^{k-K} )  ,
\eeqn
where $\rhosaga = 1 - \min\ba{ \frac{1}{4m}, \frac{\alpha}{3L} }$.
\end{proposition}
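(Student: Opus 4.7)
The strategy is to combine three already-established ingredients: the almost-sure global convergence of SAGA (Theorem~\ref{thm:convergence-saga}), the finite manifold identification result (Corollary~\ref{cor:identification}), and the local quadratic growth property (Lemma~\ref{lem:quadratic-growth}). Once these are in place, the local linear rate is obtained by re-running the standard SAGA Lyapunov contraction argument of \cite{saga14} with global strong convexity replaced by the local quadratic growth of $\Phi$ around $\xsol$.

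First, Theorem~\ref{thm:convergence-saga} yields almost surely the existence of $\xsol \in \Argmin(\Phi)$ with $\xk \to \xsol$ and $\epsksaga \to 0$. Since the hypotheses of Corollary~\ref{cor:identification} are satisfied (partial smoothness of $R$ at $\xsol$ relative to $\Msol$ and the non-degeneracy condition~\eqref{eq:cnd-nd}), there exists an almost-surely finite $K_{1} \in \bbN$ such that $\xk \in \Msol$ for all $k \geq K_{1}$. Next, the restricted injectivity \eqref{eq:cnd-ri} combined with \iref{A:R}--\iref{A:minimizers-nonempty} brings us into the setting of Lemma~\ref{lem:quadratic-growth}: $\xsol$ is the unique minimiser, and there exist $\alpha > 0$ and $r > 0$ such that $\Phi(x) - \Phi(\xsol) \geq \alpha\norm{x-\xsol}^{2}$ whenever $\norm{x-\xsol} \leq r$. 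By almost-sure convergence of $\seq{\xk}$, pick $K \geq K_{1}$ large enough that $\norm{\xk-\xsol} \leq r$ for all $k \geq K$; then on this tail $\Phi(\xk) - \Phi(\xsol) \geq \alpha \norm{\xk-\xsol}^{2}$ holds pathwise.

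For the linear rate, I would replay the Lyapunov argument of \cite[Theorem~1]{saga14} on the shifted sequence indexed by $k \geq K$. Setting
\[
T_{k} \eqdef \qfrac{1}{m}\msum_{i=1}^{m}\norm{g_{k,i} - \nabla f_{i}(\xsol)}^{2} + c\,\norm{\xk - \xsol}^{2}
\]
for a suitably tuned constant $c > 0$, one controls $\EE[T_{k+1}\mid \calF_{k}]$ by expanding the SAGA update, using the $L$-smoothness of each $f_{i}$ and the standard SAGA variance bound in terms of $T_k$. Under global strong convexity the contraction step consumes the quantity $\Phi(\xk) - \Phi(\xsol)$; in our setting we have the inequality $\Phi(\xk) - \Phi(\xsol) \geq \alpha\norm{\xk-\xsol}^{2}$ precisely on the tail $k \geq K$, which is exactly what the proof of \cite{saga14} uses strong convexity for. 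Feeding $\alpha$ into the rate calculation with $\gamma = 1/(3L)$ reproduces $\rhosaga = 1 - \min\ba{1/(4m), \alpha/(3L)}$, and taking expectations over the whole trajectory yields $\EE\Sp{\norm{\xk - \xsol}^{2}} \leq \EE[T_{K}]\,\rhosaga^{k-K}/c = O(\rhosaga^{k-K})$.

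The main obstacle is the localisation of the SAGA Lyapunov argument. The proof in \cite{saga14} is carried out globally and uses strong convexity uniformly; here the quadratic-growth lower bound is only available on a random tail, so one must argue that for all $k \geq K$ the conditional expectation $\EE[T_{k+1}\mid\calF_{k}]$ is well-defined on the same event and that the contraction propagates along the full expectation. This is handled by conditioning on the identification event, using the boundedness of $\seq{\xk}$ (the convergent sequence is a.s.\ bounded) to control the finitely many pre-identification terms, and absorbing the corresponding constant into the $O(\cdot)$ notation. A secondary point is that only convexity of $\Phi$ (not strong convexity) is invoked elsewhere in the SAGA proof, so no further modifications are required; the smoothness inputs $L_{i}$ are already the ones used to set $\gamma = 1/(3L)$.
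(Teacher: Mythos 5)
Your proposal follows essentially the same route as the paper, which simply refers the reader to \cite{saga14} after setting up the three ingredients you use: almost-sure convergence (Theorem~\ref{thm:convergence-saga}), finite identification (Corollary~\ref{cor:identification}), and local quadratic growth standing in for strong convexity (Lemma~\ref{lem:quadratic-growth}). You in fact go somewhat further than the paper by explicitly flagging and addressing the localisation issue (the random tail index $K$ and the pre-identification terms), which the paper's one-line citation glosses over.
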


We refer to \cite{saga14} for the proof of the proposition. 

\begin{remark}
Follow the result of SAGA paper, if locally we change to $\gamma = 1/(2(\alpha m + L))$, then we have for $\rhosaga$
\[
\rhosaga = { 1 - \alpha\gamma } = { 1 - \sfrac{\alpha}{2(\alpha m + L)} }  .	
\]
It also should be noted that $\gamma = \frac{1}{3L}$ is the optimal step-size for SAGA as pointed out in \cite{saga14}. 
\end{remark}

\begin{proposition}[Local linear convergence of Prox-SVRG]\label{prop:linear-rate-proxsvrg}
For problem \eqref{eq:min-Phi}, suppose that conditions \iref{A:R}-\iref{A:minimizers-nonempty} hold, and the Prox-SVRG algorithm \eqref{eq:prox-svrg} is applied such that Theorem \ref{thm:convergence-proxsvrg} holds. Then $\xk$ converges to $\xsol \in \Argmin(\Phi)$ almost surely. 
If moreover, $R \in \PSF{\xsol}{\Msol}$, and conditions \eqref{eq:cnd-nd}-\eqref{eq:cnd-ri} are satisfied. Then there exists $K >0$ such that for all $k \geq K$, 
\beqn
\bbE\Sp{\norm{\txl - \xsol}^2} =  O( \rhosvrg^{\ell-K} )  ,
\eeqn
where $\rhosvrg = \max\ba{ \frac{1-\gamma\alpha_{F}}{1+\gamma\alpha_{R}}, 4L\gamma(P+1) }$ and $\gamma,P$ are chosen such that $\rhosvrg < 1$.
\end{proposition}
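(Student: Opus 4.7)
The plan is to combine three ingredients already established in the excerpt: (i) the almost sure convergence $\xk \to \xsol$ provided by Theorem~\ref{thm:convergence-proxsvrg}(i); (ii) the finite manifold identification from Corollary~\ref{cor:identification}, which yields an almost surely finite iteration $K$ after which $\xk \in \Msol$; and (iii) the local quadratic growth $\Phi(x)-\Phi(\xsol)\geq\alpha\norm{x-\xsol}^2$ on $\ba{x : \norm{x-\xsol}\leq r}$ supplied by Lemma~\ref{lem:quadratic-growth}. Enlarging $K$ if needed, I would further assume $\txl \in \Msol$ and $\norm{\txl-\xsol}\leq r$ for every snapshot index $\ell$ with $\ell P \geq K$, so that both inner iterates and snapshot points lie on $\Msol$ inside the quadratic-growth neighbourhood.

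\textbf{Localising the strongly convex analysis.} Once all relevant iterates and snapshot points lie on $\Msol$ within the quadratic-growth region, I would revisit the derivation of Theorem~\ref{thm:convergence-proxsvrg}(ii). Inspecting that argument, strong convexity of $F$ enters only through the bound $\Phi(x)-\Phi(\xsol)\geq \tfrac{\alpha_F}{2}\norm{x-\xsol}^2$, which is precisely what local quadratic growth delivers with effective constant $\alpha_F = 2\alpha$; the term $\alpha_R$ originates from strong convexity of $R$ inside the prox inequality, and by partial smoothness (Definition~\ref{dfn:psf}) the restriction $R\lvert_{\Msol}$ is $C^2$ near $\xsol$, so any additional curvature can be absorbed into the local constant, with $\alpha_R = 0$ already sufficient to recover the stated form of $\rhosvrg$. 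The variance-reduction estimates used in the same proof depend only on the component-wise Lipschitz assumption \iref{A:F}, which is a global property; hence the one-epoch recursion
\[
\bbE\sp{\norm{\txlp-\xsol}^2} \leq \rhosvrg\, \bbE\sp{\norm{\txl-\xsol}^2}
\]
continues to hold for every $\ell$ with $\ell P \geq K$ after substituting the local surrogates for $\alpha_F, \alpha_R$.

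\textbf{Main obstacle.} The delicate point is the interplay between the random identification time $K$ and the double-loop structure of Prox-SVRG: the snapshot gradient $\tilde g_\ell = \tfrac{1}{m}\sum_{i=1}^m \nabla f_i(\txl)$ aggregates all components, so a naive argument must guard against early epochs in which $\txl$ may lie off $\Msol$. I would handle this by conditioning on the full-measure event on which identification has occurred by some epoch $\ell_0$ with $\ell_0 P \geq K$, and then iterating the one-epoch recursion from $\ell_0$ onwards. Taking total expectation yields $\bbE\sp{\norm{\txl-\xsol}^2} \leq C\,\rhosvrg^{\ell-\ell_0}$, with $C$ absorbing the almost surely bounded initial slack $\norm{\tilde x_{\ell_0}-\xsol}^2$; this is the claimed $O(\rhosvrg^{\ell-K})$ rate after relabelling. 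The argument parallels the SAGA case (Proposition~\ref{prop:linear-rate-saga}) but requires more care, since the linearly convergent regime of Prox-SVRG is governed jointly by $\alpha_F$, $\alpha_R$ and by the step-size constraint $4L\gamma(P+1) < 1$, all of which must remain uniformly valid inside the identification neighbourhood.
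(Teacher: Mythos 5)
Your proposal is correct and follows essentially the same route as the paper: the paper's own proof is the single remark that the claim ``is a direct consequence of Theorem~\ref{thm:convergence-proxsvrg}(ii)'', preceded by exactly the observation you make --- that once the sequence enters the neighbourhood where Lemma~\ref{lem:quadratic-growth} gives quadratic growth, the strongly convex rate applies with the growth constant substituted for $\alpha_F$ (and $\alpha_R=0$). Your additional care about the random identification time $K$ and about re-checking where strong convexity actually enters the one-epoch recursion is more detail than the paper provides, but it is the same argument.
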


The claim is a direct consequence of Theorem \ref{thm:convergence-proxsvrg}(ii). 

\begin{remark}
	$~$
\begin{enumerate}[label={\rm (\roman{*})}]
\item
When $P$ is large enough, then $\rhosvrg \approx \frac{1}{\alpha\gamma(1-4L\gamma)P} + \frac{4L\gamma}{1-4L\gamma}$, to make it strictly smaller than $1$, we need $P \geq 32L/\alpha$ and moreover 
\[
\gamma \in \bSp{ \sfrac{P\alpha - \sqrt{\Delta}}{16LP\alpha}, \sfrac{P\alpha + \sqrt{\Delta}}{16LP\alpha} }
~~~~\textrm{where}~~~
\Delta = P\alpha (P\alpha - 32L) .
\]
\item
In \cite{gong2014linear}, the authors studied the linear convergence convergence of Prox-SVRG under a ``semi-strongly convex'' assumption. Our assumption for local linear convergence is very close to this one, however in stead of only allowing polyhedral functions (typically $\ell_{1}$-norm), our analysis goes much further, for instance our result allows to analyse nuclear norm. 
\item
The above local linear convergence result is quite different from that of \cite[Section 4]{liang2017activity} for the deterministic FBS-type methods, which can be summarised into the following steps:
\begin{enumerate}[leftmargin=5.5em,label= {\rm \textbf{Step \arabic{*}.\hspace*{-4pt}} }, ref= {\rm \textbf{Step \arabic{*}}}]
\item \label{step1} 
Locally along the identified $\Msol$, the globally non-linear iteration \eqref{eq:fbs} can be linearised, resulting in a linear matrix $\mFB$;
\item \label{step2} 
Spectral properties of $\mFB$, conditions such that the spectral radius $\rho(\mFB) <~1$;
\item \label{step3} 
Local linear convergence of FBS-type splitting schemes.
\end{enumerate}
The advantage of this strategy is that it exploits explicitly the geometry of the manifold $\Msol$ and encodes it into the matrix $\mFB$, which result in a very tight rate estimation. 

{\parindent12pt The main difficulty of applying the above strategy to SAGA/Prox-SVRG is that, under the stochastic setting, the error $\epsk$ in \eqref{eq:inexact-fbs} cannot be controlled explicitly, which makes it impossible to use the spectral radius $\rho(\mFB)$ as rate estimation; see the section below for more details. }

\end{enumerate}
\end{remark}

\subsection{Better local rate estimation?}

Consider FBS and SAGA algorithms, when $\Phi$ is $\alpha$-strongly convex and the step-size is chosen as $\gamma = 1/(3L)$, then the convergence rate of $\norm{\xk-\xsol}$ for these two algorithms are
\[
\rhofb = {1 - \sfrac{\alpha}{3L}}	,~~~
\rhosaga = \sqrt{ 1 - \min\ba{ \tfrac{1}{4m}, \tfrac{\alpha}{3L} } }  .
\]
Clearly, the rate estimation of FBS is better than that of SAGA. Note that here we are comparing the convergence rate \emph{per iteration}, not based on gradient evaluation complexity. 
For the rest of this part, we will discuss the difficulties of improving the rate estimations for SAGA and Prox-SVRG.

\subsubsection{Local linearised iteration}

Follow the setting of \cite{liang2017activity}, suppose that $F$ locally around $\xsol$ is $C^2$-smooth, define the following matrices which are all \emph{symmetric}:
\beq\label{eq:mtxs}
\HF \eqdef \PT{T_{\xsol}} \nabla^2 F(\xsol) \PT{T_{\xsol}}  ,\quad
\GF \eqdef \Id - \gamma \HF ,\quad
\HR \eqdef \nabla^2_{\Msol} {\Phi}(\xsol)\PT{T_{\xsol}} -  \HF  ,
\eeq
where $\nabla^2_{\Msol} {\Phi}$ is the Riemannian Hessian of $\Phi$ along the manifold $\Msol$; see Lemma~\ref{lem:gradhess}.

\begin{lemma}[{\cite[Lemma~13,14]{liang2017activity}}]\label{lem:eigmtxs} 
For problem \eqref{eq:min-Phi}, suppose that conditions \iref{A:R}-\iref{A:minimizers-nonempty} hold and $\xsol \in \Argmin\pa{\Phi}$ such that $R \in \PSF{\xsol}{\Msol}$, $F$ is locally $C^2$ around $\xsol$ and conditions \eqref{eq:cnd-nd} and \eqref{eq:cnd-ri}~hold. 
\begin{enumerate}[label=\rm(\roman{*})]
\item 
$\HR$ is symmetric positive semi-definite, hence $\Id + \gamma \HR$ is invertible, and $\WR \eqdef \pa{\Id + \gamma \HR}^{-1}$ is symmetric positive definite with eigenvalues in $]0,1]$.
\item
Define the matrix $\mFB$ by
\beq\label{eq:mtx-M}
\mFB \eqdef  \WR \GF  .
\eeq
For $\gamma \in ]0, 1/L[$, $\mFB$ has real eigenvalues lying in $]0, 1[$ with spectral radius $\rho(\mFB) \leq 1 - \alpha \gamma$.
\end{enumerate}
\end{lemma}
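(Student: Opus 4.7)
The argument will follow the template of \cite[Lemmas~13 and~14]{liang2017activity}, pairing Riemannian-geometric identities at $\xsol$ with a direct eigenvalue computation.

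For part (i), symmetry is immediate: $\HF$ is the conjugation of the symmetric Euclidean Hessian $\nabla^2 F(\xsol)$ by the self-adjoint projector $\PT{T_{\xsol}}$, and $\nabla^2_{\Msol}\Phi(\xsol)\PT{T_{\xsol}}$ inherits symmetry from the self-adjointness of the Riemannian Hessian on $\Tsol$ together with the tangent/normal split. To obtain $\HR \succeq 0$, I would decompose the Riemannian Hessian of $\Phi = R + F$ along $\Msol$ into its $R$-part and $F$-part, and invoke the Weingarten identity
\[
\nabla^2_{\Msol} F(\xsol)\PT{T_{\xsol}} = \HF + \mathfrak{A}_{\xsol}\bigl(\cdot,\, \proj_{T_{\xsol}^\perp}\nabla F(\xsol)\bigr),
\]
where $\mathfrak{A}_{\xsol}$ is the Weingarten map of $\Msol$ at $\xsol$. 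This rewrites $\HR$ as the Riemannian Hessian of $R$ along $\Msol$ at $\xsol$ plus a curvature term of known sign. The standard partial-smoothness tilt-stability argument of \cite{LewisPartlyTiltHessian}, together with optimality of $\xsol$ on $\Msol$, then gives $\HR \succeq 0$. Hence $\Id + \gamma \HR$ has spectrum in $[1,+\infty[$, and $\WR$ is symmetric positive definite with eigenvalues in $]0,1]$.

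For part (ii), the similarity transform
\[
\WR^{-1/2} \mFB \WR^{1/2} = \WR^{1/2} \GF \WR^{1/2}
\]
exhibits $\mFB$ as similar to a symmetric matrix, so its eigenvalues are real. To bound them, let $\lambda$ be an eigenvalue of $\mFB$ with eigenvector $v$. The relation $\lambda(\Id + \gamma \HR) v = (\Id - \gamma \HF) v$ paired with $v$ yields
\[
\lambda = \frac{1 - \gamma \iprod{v}{\HF v}/\|v\|^2}{1 + \gamma \iprod{v}{\HR v}/\|v\|^2}.
\]
For $\gamma \in {]0, 1/L[}$ together with $0 \preceq \HF \preceq L \PT{T_{\xsol}}$, the numerator is positive. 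Restricting attention to the active directions in $\Tsol$, condition \eqref{eq:cnd-ri} upgrades to $\HF \succeq \alpha \PT{T_{\xsol}}$, so the numerator is bounded by $1 - \gamma \alpha$, while $\HR \succeq 0$ gives a denominator at least $1$, producing $\rho(\mFB) \leq 1 - \gamma \alpha < 1$ and $\lambda > 0$.

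The main obstacle, and what occupies most of the effort in \cite{liang2017activity}, is the bookkeeping of the tangent/normal decomposition $\bbR^n = \Tsol \oplus \Tsol^\perp$. On $\Tsol^\perp$ both $\HF$ and $\HR$ vanish, so the eigenvalue relation returns the trivial eigenvalue $\lambda = 1$; to make the spectral radius bound meaningful one must either reduce $\mFB$ to its action on $\Tsol$ from the outset, or argue that the downstream local linear convergence analysis only uses directions in $\Tsol$ after manifold identification. Keeping these two viewpoints consistent, while tracking how the Weingarten curvature term interacts with the restricted injectivity, is the delicate step of the proof.
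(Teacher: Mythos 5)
The paper itself gives no proof of this lemma: it is imported verbatim from \cite[Lemmas~13 and 14]{liang2017activity}, so there is no in-paper argument to compare yours against. Your reconstruction follows the same route as the cited source (Weingarten/Riemannian-Hessian decomposition for part (i), similarity to a symmetric matrix plus a Rayleigh-quotient computation for part (ii)), and the part (ii) computation, restricted to $\Tsol$, is correct. Your closing remark about $\Tsol^\perp$ is also a genuine and correct catch: since $\HF$ and $\HR$ both carry $\PT{T_{\xsol}}$ on the right, $\mFB$ acts as the identity on $\Tsol^\perp$, so the bound $\rho(\mFB)\leq 1-\alpha\gamma$ is meaningful only for $\mFB$ viewed as an operator on $\Tsol$; this is consistent with how the lemma is consumed in Proposition~\ref{prop:linearisation}, where $\dk = \xk-\xsol+o(\norm{\xk-\xsol})$ and, after identification, $\xk-\xsol = \PT{\Tsol}(\xk-\xsol)+o(\norm{\xk-\xsol})$.

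The one step that is wrong as written is the claim in part (i) that the decomposition exhibits $\HR$ as the Riemannian Hessian of $R$ along $\Msol$ \emph{plus a curvature term of known sign}. The Weingarten term $\Wgtmap{\xsol}\bigl(\cdot,\proj_{T_{\xsol}^\perp}\nabla F(\xsol)\bigr)$ has no sign on its own. What makes the argument work is that the sum $\nabla^2_{\Msol}R(\xsol) + \Wgtmap{\xsol}\bigl(\cdot,\proj_{T_{\xsol}^\perp}\nabla F(\xsol)\bigr)$ is precisely the Riemannian Hessian along $\Msol$ of the tilted function $x\mapsto R(x)+\iprod{\nabla F(\xsol)}{x}$ (the linear tilt contributes nothing to the Euclidean Hessian but exactly the above Weingarten term, by linearity of the Weingarten map in its second argument). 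That tilted function is convex on $\bbR^n$ and, since $-\nabla F(\xsol)\in\partial R(\xsol)$, is globally minimised at $\xsol$; hence its Riemannian Hessian there is positive semi-definite, which is the statement $\HR$ is positive semi-definite. Your subsequent sentence invoking the tilt argument of \cite{LewisPartlyTiltHessian} and optimality arrives at the right conclusion, but you should drop the assertion that the curvature term alone has a sign and instead attribute positive semi-definiteness to the tilted function as a whole.
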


\begin{proposition}[Local linearised iteration]\label{prop:linearisation}
For problem \eqref{eq:min-Phi}, suppose that conditions \iref{A:R}-\iref{A:minimizers-nonempty} hold. 
Assume the perturbed Forward--Backward iteration \eqref{eq:inexact-fbs} is applied to create a sequence $\seq{\xk}$ such that the conditions of Theorem~\ref{thm:abstract-identification} hold. 
Then there exists an $\xsol \in \Argmin\pa{\Phi}$ such that $\xk \to \xsol$ almost surely and $\xk \in \Msol$ for all $k$ large enough. 

If moreover, $F$ is locally $C^2$-smooth around $\xsol$ and $\gammak \to \gamma \in ]0, 1/L[$, then with probability one, there exists $K\in\NN$ such that for all $k\geq K$, we have 
\beq\label{eq:linearised-inexact-fb}
\dkp = \mFB \dk + \phik  ,
\eeq
where $\dk \eqdef \xk-\xsol + o(\norm{\xk-\xsol})$ and $\phik = \gamma \WR \PT{\Tsol} \epsk + o(\norm{\epsk})$.
\end{proposition}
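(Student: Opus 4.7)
The strategy is to leverage finite identification to localise the dynamics of \eqref{eq:inexact-fbs} on the smooth manifold $\Msol$, and then to first-order linearise the fixed-point equation around $\xsol$ along the tangent space $\Tsol$. The proof splits into three stages.

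First, I would invoke Theorem~\ref{thm:abstract-identification}: the hypotheses here (convergence of $\seq{\xk}$ and $\seq{\epsk}$, lower boundedness of $\gammak$) are exactly the conditions \iref{B:gamma-geq-eps}--\iref{B:xk-converge}, so with probability one there is a (random) index $K$ such that $\xk,\xkp\in\Msol$ for all $k\geq K$. Up to enlarging $K$, I can also assume that $\xk$ lies in a neighbourhood of $\xsol$ on which $F$ is $C^2$, and that $|\gammak-\gamma|$ is as small as desired.

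Next, I would start from the sub-differential inclusion \eqref{eq:opt-cnd-xkp}, namely
\[
v_{k} \eqdef \qfrac{\xk-\xkp}{\gammak}-\nabla F(\xk)-\epsk \;\in\; \partial R(\xkp),
\]
together with the first-order optimality $-\nabla F(\xsol)\in\partial R(\xsol)$. Since $\xkp,\xsol\in\Msol$ and $R\in\PSF{\xsol}{\Msol}$, the sharpness property of partial smoothness pins down the tangential projection of the sub-differential: $\proj_{\Tsol}\partial R(\xkp)$ coincides, modulo $o(\norm{\xkp-\xsol})$, with the Riemannian gradient of $R$ along $\Msol$ at $\xkp$. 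Combining this with Lemma~\ref{lem:gradhess} (the Taylor expansion of the Riemannian gradient/Hessian of $\Phi$) and using the Riemannian first-order condition $\nabla_{\Msol}R(\xsol)=-\proj_{\Tsol}\nabla F(\xsol)$, I get
\[
\proj_{\Tsol} v_k \;=\; -\proj_{\Tsol}\nabla F(\xsol) + \HR\proj_{\Tsol}(\xkp-\xsol) + o(\norm{\xkp-\xsol}).
\]
On the other side, a Euclidean Taylor expansion of $\nabla F$ around $\xsol$ and the identity $\HF=\proj_{\Tsol}\nabla^{2}F(\xsol)\proj_{\Tsol}$ give $\proj_{\Tsol}(\nabla F(\xk)-\nabla F(\xsol))=\HF\proj_{\Tsol}(\xk-\xsol)+o(\norm{\xk-\xsol})$. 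I would also use that because $\Msol$ is $C^2$ and tangent to $\Tsol$ at $\xsol$, one has $x-\xsol=\proj_{\Tsol}(x-\xsol)+O(\norm{x-\xsol}^{2})$ for $x\in\Msol$ near $\xsol$, so the tangential component already captures $\xk-\xsol$ up to a higher-order error — this is precisely the little-$o$ slack folded into the definition $\dk=\xk-\xsol+o(\norm{\xk-\xsol})$.

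Plugging both expansions into $\proj_{\Tsol} v_k$ and writing $u_k\eqdef\proj_{\Tsol}(\xk-\xsol)$, the projected inclusion becomes
\[
\tfrac{1}{\gammak}(u_{k}-\ukp) \;=\; \HF u_k+\HR\ukp+\proj_{\Tsol}\epsk+o\bPa{\norm{u_k}+\norm{\ukp}+\norm{\epsk}}.
\]
Multiplying through by $\gammak$, collecting $\ukp$-terms and inverting the symmetric positive-definite matrix $\Id+\gammak\HR$ (invertibility is guaranteed by Lemma~\ref{lem:eigmtxs}(i)), I arrive at
\[
\ukp \;=\; \WR\GF\,u_k \;-\;\gamma\WR\proj_{\Tsol}\epsk\;+\;o(\cdot),
\]
where the replacement of $\gammak$ by the limit $\gamma$ is absorbed into the error because $\gammak\to\gamma$. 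This is exactly \eqref{eq:linearised-inexact-fb} (up to the sign convention for $\phik$ inherited from \eqref{eq:inexact-fbs}), modulo rewriting $u_k$ as $\dk$ using the $O(\norm{\xk-\xsol}^2)$ gap between $\xk-\xsol$ and its tangential projection.

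\textbf{Main obstacle.} The routine calculations — Taylor expansion of $\nabla F$, inverting $\Id+\gammak\HR$, handling $\gammak\to\gamma$ — are standard. The genuinely delicate step is identifying the tangential projection of the particular sub-gradient $v_k\in\partial R(\xkp)$ with the Riemannian gradient $\nabla_{\Msol}R(\xkp)$ up to a true $o(\norm{\xkp-\xsol})$ error. This uses the sharpness and sub-differential continuity clauses of partial smoothness together with the non-degeneracy \eqref{eq:cnd-nd}, and must be handled carefully because $\partial R(\xkp)$ is generally set-valued and its ``extra'' normal component could a priori be of order $\norm{\xkp-\xsol}$ rather than $o(\norm{\xkp-\xsol})$. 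Once Lemma~\ref{lem:gradhess} is applied to control this remainder, the rest of the argument is the linear-algebraic rearrangement outlined above.
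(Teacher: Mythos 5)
Your overall strategy is the paper's: identify the manifold via Theorem~\ref{thm:abstract-identification}, expand the optimality inclusion for $\xkp$ to first order around $\xsol$, and invert $\Id+\gamma\HR$ to obtain the linear recursion. However, the step you yourself flag as delicate is left genuinely open, and as stated it is false on a curved manifold. You assert that $\proj_{\Tsol} v_k$ agrees with the Riemannian gradient of $R$ at $\xkp$ up to $o(\norm{\xkp-\xsol})$. Lemma~\ref{lem:gradhess} gives the exact identity $\nabla_{\Msol}R(\xkp)=\proj_{T_{\xkp}}\partial R(\xkp)$ for the \emph{moving} tangent space $T_{\xkp}$, not for $\Tsol$. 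Writing $v_k=\proj_{T_{\xkp}}v_k+\proj_{N_{\xkp}}v_k$, the term $\proj_{\Tsol}\proj_{N_{\xkp}}v_k$ involves a normal component that converges to the (generically nonzero) normal part of $-\nabla F(\xsol)$, multiplied by a projector discrepancy whose operator norm is $O(\norm{\xkp-\xsol})$ for a $C^2$ manifold. This product is $O(\norm{\xkp-\xsol})$, not $o(\norm{\xkp-\xsol})$: it is a first-order term of exactly the same size as $\HR\proj_{\Tsol}(\xkp-\xsol)$, namely the Weingarten-map contribution $\Wgtmap{\xsol}\pa{\cdot,\proj_{\normSp{\Msol}{\xsol}}\nabla \widetilde{R}(\xsol)}$ appearing in \eqref{eq:RieHessian}. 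Dropping it into the little-$o$ remainder loses part of $\HR$ whenever $\Msol$ is not affine (e.g.\ the nuclear norm), so your displayed expansion of $\proj_{\Tsol}v_k$ is not justified by the argument you give.

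The paper's proof avoids this trap by never projecting the raw subgradient onto the fixed space $\Tsol$. It projects the inclusion onto $T_{\xkp}$ (where Lemma~\ref{lem:gradhess} applies exactly and the set-valuedness disappears), then transports everything back to $\Tsol$ with the parallel translation $\tau_{k+1}^{-1}$: Lemma~\ref{lem:taylor-expn} expands $\tau_{k+1}^{-1}\nabla_{\Msol}\Phi(\xkp)$ with the full Riemannian Hessian (Weingarten term included), and Lemma~\ref{lem:parallel-translation} is applied only to vectors that themselves tend to zero ($\xk-\xkp$, $\nabla F(\xk)-\nabla F(\xkp)$, $\epsk$), for which the $o(\norm{\cdot})$ estimate is harmless. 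To repair your argument you would either need to reproduce this parallel-translation bookkeeping, or restrict to the case where $\Msol$ is affine (so $T_{\xkp}=\Tsol$ by Lemma~\ref{fact:sharp} and your computation goes through verbatim). A minor further point: the paper works with $\partial\Phi(\xkp)$ rather than $\partial R(\xkp)$ by moving $\nabla F(\xkp)$ to the left-hand side, which is only a cosmetic difference from your setup. The remaining linear algebra (Taylor expansion of $\nabla F$, absorption of $\gammak-\gamma$, inversion of $\Id+\gamma\HR$) matches the paper and is fine.
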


See Appendix \ref{appendix:proof-sec4} for the proof. Note that for $\phik$, there still holds $\EE\sp{\phik} = 0$. 

\begin{remark}
In \cite{liang2017activity}, the linearisation of deterministic FBS scheme reads,
\[
\xkp - \xsol = \mFB(\xk - \xsol) + o(\norm{\xk-\xsol})	,
\]
which is much more straightforward than Theorem \ref{prop:linearisation}. 
The reason for such a difference is that the behaviour of deterministic FBS is monotonic, \eg $\norm{\xkp-\xsol} \leq \norm{\xk-\xsol}$, which allows us to encode all the small $o$-terms into $o(\norm{\xk-\xsol})$. 
\end{remark}

\subsubsection{No better rate estimation}

We discuss in short why the spectral radius of $\mFB$ cannot serve as the local convergence rate of stochastic optimisation methods, which is different from the deterministic setting. 
Let $K \in \bbN$ be sufficiently large such that \eqref{eq:linearised-inexact-fb} holds. 
Then we get
\beqn
\dkp
= \mFB^{k+1-K} d_{K}  +   \msum_{j=K}^{k} \mFB^{k-j}{{\phi_{j}}}  .
\eeqn
Take $\rho \in ]\rho(\mFB),1[$, owing to Lemma \ref{lem:eigmtxs}, there exists a constant $C > 0$ such that
\beqn
\begin{aligned}
\bbE(\norm{\dkp}) 
&\leq C {\rho}^{k+1-K} \bbE (\norm{x_{K}-\xsol}) + \msum_{j=K}^{k} \rho^{k-j}\bbE(\norm{\phi_{j}})  \\
&\leq C {\rho}^{k+1-K} \Ppa{ \bbE (\norm{x_{K}-\xsol}) + \rho^{K-1} \msum_{j=K}^{k} \qfrac{ \bbE(\norm{\phi_{j}})} { \rho^{j} } }   .
\end{aligned}
\eeqn
Now consider the SAGA algorithm, owing to Proposition \ref{prop:linear-rate-saga}, we have only that 
\[
\bbE(\norm{\phi_{j}}) = O\Pa{ (\sqrt{\rhosaga})^j }  ,
\]
which means $\lim_{k\to+\infty} \sum_{j=K}^{k} \frac{ \bbE(\norm{\phi_{j}})} { \rho^{j} } < +\infty$ holds only for $\rho \in ]\sqrt{\rhosaga}, 1[$. 
As a consequence, we can only obtain the same rate estimation as the original SAGA. 

\begin{remark}
The main message of the above discussion is: under a given step-size $\gamma$, the spectral radius $\rho(\mFB)$ is the optimal convergence rate can be achieved by SAGA/Prox-SVRG. However, depending on the problems to solve, the practical performance of these methods could be slower than $\rho(\mFB)$. 
\end{remark}

\paragraph{An overdetermined LASSO problem}
In Section \ref{sec:slr}, a sparse logistic regression problem is considered, where both SAGA and Prox-SVRG converge at the rate of $\rho(\mFB)$; see Figure \ref{fig:SLR}. 
Below we design an example of LASSO problem, to discuss the situations where $\rho(\mFB)$ cannot be achieved. 

Consider again the LASSO problem, 
\[
\min_{x\in\bbR^n} \mu\norm{x}_{1} + \qfrac{1}{m} \msum_{i=1}^{m} \sfrac{1}{2} \norm{\calK_{i} x - b_{i}}^2  ,	
\]
where now $\calK \in \bbR^{m\times n}$ is a random Gaussian matrix with zero means and $b \in \bbR^m$. 
Moreover, we choose $m=256, n = 32$, that is much more measurements than the size of the vector. 

For the test example, we have $L = 0.2239$ and the local quadratic grow parameter $\alpha = 0.0032$. The parameter choices of SAGA and Prox-SVRG with ``Option II'' are:
\[
\textrm{SAGA}:  \gamma = \qfrac{1}{3L}  ; \qquad
\textrm{Prox-SVRG}: \gamma = \qfrac{1}{10L} ,~~~ P = \qfrac{100L}{\alpha}  .
\]
We have $P \approx 27 m$ which is quite large. 
As discussion in the original work \cite{proxsvrg14}, with the above parameters choices, $\rhosvrg \approx \frac{5}{6}$.

The outcomes of the numerical experiments are shown in Figure \ref{fig:od-lasso}, where the observation of $\seq{\norm{\xk-\xsol}}$ is provided for SAGA and $\seq{\norm{\txl-\xsol}}$ for Prox-SVRG. The \emph{solid} lines stand for practical observations of the methods, the \emph{dashed} lines are the theoretical estimation from Proposition \ref{prop:linear-rate-saga} and \ref{prop:linear-rate-proxsvrg}, the \emph{dot-dashed} lines are the estimation from $\rho(\mFB)$. 
All the lines are sub-sampled, one out of every $m$ points for SAGA and $P$ points for Prox-SVRG. 
Note also that the observation is not in norm square. 

For this example, both the convergence speeds of SAGA and Prox-SVRG are slower than the spectral radius $\rho(\mFB)$. 
Empirically, the reason for SAGA is that the ratio of $m/n$ much larger than $1$, while for Prox-SVRG, the reason is that $P/m$ is too large.

\begin{figure}[!ht]
\centering
\subfloat[SAGA, $\norm{\xk-\xsol}$]{ \includegraphics[width=0.45\linewidth]{./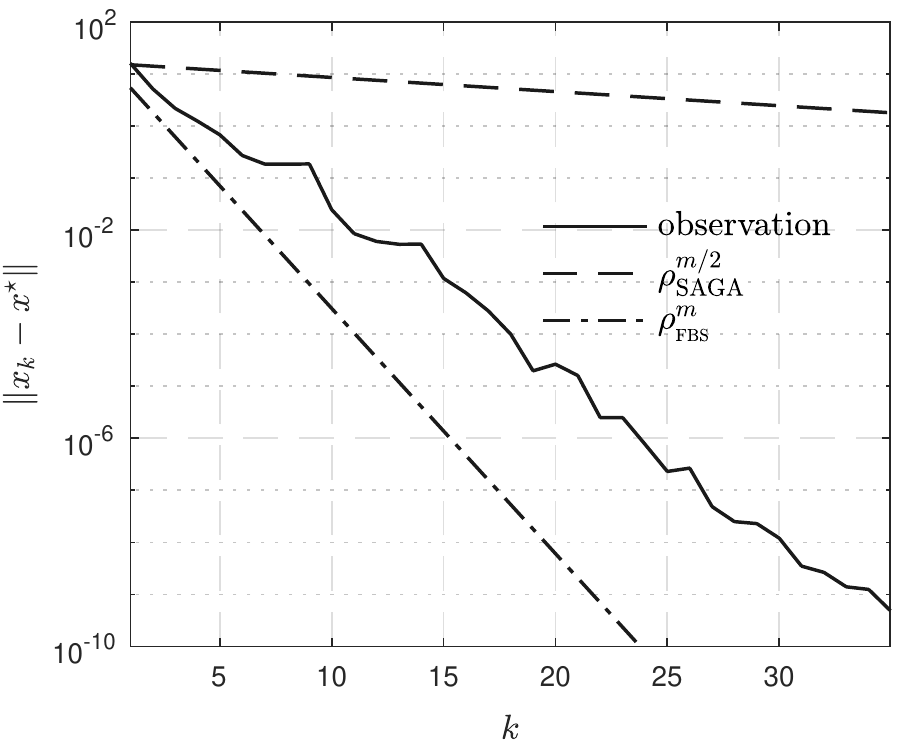} }  \hspace{2pt}
\subfloat[Prox-SVRG ``Option II'', $\norm{\txl-\xsol}$]{ \includegraphics[width=0.45\linewidth]{./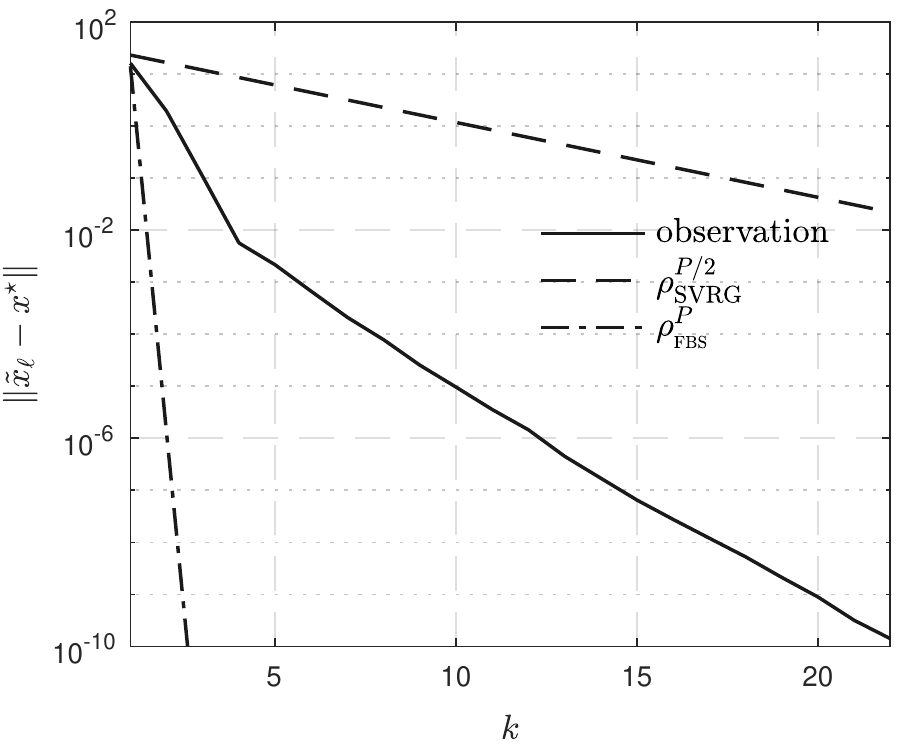} }  
\caption{
Convergence rate of SAGA and Prox-SVRG when solving an overdetermined LASSO problem. 
(a) convergence behaviour of $\norm{\xk-\xsol}$ of SAGA; (b) convergence behaviour of $\norm{\txl-\xsol}$ of Prox-SVRG.
The \emph{solid} lines stands for practical observations of the methods, the \emph{dashed} lines are the theoretical estimation from Proposition~\ref{prop:linear-rate-saga} and \ref{prop:linear-rate-proxsvrg}, the \emph{dot-dashed} lines are the estimation from $\rho(\mFB)$. 
All the lines are sub-sampled, one out of every $m$ points for SAGA and $P$ points for Prox-SVRG. $\rho_{_{\mFB}}$ denotes the spectral radius of $\mFB$.
}
 \label{fig:od-lasso}
\end{figure}

\subsection{Beyond local convergence analysis}\label{sec:beyond}

%
As already pointed out, manifold identification (Theorem \ref{thm:abstract-identification}) implies that, the globally non-smooth problem locally becomes a $C^2$-smooth and possibly non-convex (\eg nuclear norm) problem, constrained on the identified manifold, that is
\beqn
\begin{gathered} \textrm{$\min_{x\in\bbR^n} \Phi $} \\ \textrm{non\textrm{-}smooth on }~\bbR^n \end{gathered}
~\xRightarrow{~\textrm{Theorem~\ref{thm:abstract-identification}}~~~~}~
\begin{gathered} \textrm{$\min_{x\in\Msol} \Phi $} \\ \textrm{$C^2$-smooth on }~\Msol  \end{gathered}
\eeqn
Such a transition to local $C^2$-smoothness, provides various choices of acceleration. For instance, in \cite{lee2012manifold}, the authors proposed a local version of RDA, called RDA+, which achieves \emph{linear} convergence. 
In the following, we discuss several practical acceleration strategies.

\subsubsection{Better local Lipschitz continuity}

If the dimension of the manifold $\Msol$ is much smaller than that of the whole space $\bbR^n$, then constrained to $\Msol$, the Lipschitz property of the smooth part would become much better. 
For each $i\in\ba{1,\dotsm,m}$, denote by $L_{\Msol, i}$ the Lipschitz constant of $\nabla f_{i}$ along the manifold $\Msol$, and let 
\[
L_{\Msol} \eqdef \max_{i=1,\dotsm,m}  L_{\Msol, i}  .
\]
In general, locally around $\xsol$, we have $L_{\Msol} \leq L$. 

For SAGA/Prox-SVRG or other stochastic methods which have the manifold identification property, once the manifold is identified, they can adapt their step-sizes to the local Lipschitz of the problem {once the manifold is identified, one can adapt their step-sizes to the local Lipschitz constants of the problem}. Since step-size is crucial to the convergence speed of these algorithms, the potential acceleration of such as local adaptive strategy can be significant. 

In the numerical experiments section, this strategy is applied to the sparse logistic regression problem. For the considered problem, we have $L / L_{\Msol} \approx 16$, and the adaptive strategy achieves a $16$ times acceleration. 
It is worth mentioning that, the computational cost for evaluating $\Msol$ is negligible.

\subsubsection{Lower computational complexity}

Another important aspect of the {manifold identification property} is that {one can} reduce the computational cost, especially when $\Msol$ is of very low dimension.

Take $R = \norm{\cdot}_{1}$ as the $\ell_{1}$-norm for example. 
Suppose that the solution $\xsol$ of $\Phi$ is $\kappa$-sparse, \ie the number of non-zero entries of $\xsol$ is $\kappa$. 
We have two stages of gradient evaluation complexity for $\nabla f_{i}(\xk)$:
\begin{description}[leftmargin=5cm]
\item[Before identification]
$O(n)$ complexity;
\item[After identification]
$O(\kappa)$ complexity;
\end{description}
The reduction of computational cost is decided by the ratio of $n/\kappa$. Depending on this ratio, either mini-batch based methods, or even  deterministic methods with momentum acceleration can be applied (\eg inertial Forward--Backward schemes \cite{liang2017activity}, FISTA \cite{fista2009}).



\subsubsection{Higher-order acceleration}

The last acceleration strategy to discuss is the Riemannian manifold based higher-order acceleration. 
Recently, various the Riemannian manifold based optimisation methods are proposed in the literature \cite{kressner2014low,ring2012optimization,vandereycken2013low,boumal2014manopt}, particularly for low-rank matrix recovery.  
However, an obvious drawback of this class of methods is that the manifold should known \emph{a priori}, which limits the applications of these methods. 

The manifold identification property of proximal methods implies that one can first use the proximal method to identify the correct manifold, and then turn to the manifold based optimisation methods. The higher-order methods that can be applied include Newton-type method, when the restricted injectivity condition \eqref{eq:cnd-ri} is satisfied, and Riemannian geometry based optimisation methods \cite{LemarechalULagrangian,miller2005newton,smith1994optimization,boumal2014manopt,vandereycken2013low}, for instance the non-linear conjugate gradient method \cite{smith1994optimization}. 
Stochastic Riemannian manifold based optimisation methods are also studied in the literature, for instance in \cite{zhang2016riemannian}, the authors generalised the SVRG method to the manifold setting.

\section{Numerical experiments}\label{sec:experiment}

In this  section, we consider several concrete examples to illustrate our results. Three examples of $R$ are considered, sparsity promoting $\ell_{1}$-norm, group sparsity promoting $\ell_{1,2}$-norm and low rank promoting nuclear norm. We refer to \cite{liang2017activity} and the references therein for the detailed properties of these functionals. 

As the main focus of this work is the theoretical properties of SAGA and Prox-SVRG algorithms, the scale of the problems considered are not very large.

\subsection{Local linear convergence}
\label{sec:slr}

We consider the sparse logistic regression problem to demonstrate the manifold identification and local linear convergence of SAGA/Prox-SVRG algorithms. Moreover in this experiment, we provide only the rate estimation from the spectral radius $\rho(\mFB)$.

\begin{example}[Sparse logistic regression]\label{ex:slr}
Let $m > 0$ and $(z_i,y_i) \in \bbR^n \times \{\pm 1\} ,~ i=1,\cdots,m$ be the training set. 
The sparse logistic regression is to find a linear decision function which minimizes the objective
\beq\label{eq:hingesvm}
\min_{(x, b) \in \bbR^n \times \bbR } \mu \norm{x}_1 + \qfrac{1}{m} \msum_{i=1}^m \log\Pa{ 1+e^{ -y_{i} f(z_{i}; x,b) } }  ,
\eeq
where $f(z; x,b) = b + z^T x$.
\end{example}

The setting of the experiment is: $n = 256 ,~~~
m = 128 ,~~~    
\mu = 1/\sqrt{m} ~~\qandq %
L =  1188 $. 
Apparently, the dimension of the problem is larger than the number of training points. The parameters choices of SAGA and Prox-SVRG are:
\[
\textrm{SAGA}:  \gamma = \sfrac{1}{2L}  ; \quad
\textrm{Prox-SVRG}: \gamma = \sfrac{1}{3L} ,~~~ P = m  .
\]

\begin{remark}
The step-sizes of SAGA/Prox-SVRG exceeds the one allowed by Theorem \ref{thm:convergence-saga} and \ref{thm:convergence-proxsvrg}, respectively. The reason of choosing different step-sizes for SAGA and Prox-SVRG is mostly for the visual quality of the graphs in Figure \ref{fig:SLR}.  
\end{remark}

The observations of the experiments are shown in Figure~\ref{fig:SLR}. The observations of Prox-SVRG are for the inner loop sequence $x_{\ell, p}$, which is denoted as $\xk$ by letting $k= \ell P+p$. 
The non-degeneracy condition~\eqref{eq:cnd-nd} and the restricted injectivity condition \eqref{eq:cnd-ri} are checked \emph{a posterior}, which are all satisfied for the tested example. 
The local quadratic growth parameter $\alpha$ and the local Lipschitz constant $L_{\Msol}$ are
\[
\alpha =  0.0156
\qandq
L_{\Msol} = 61 .    
\]
Note that, locally the Lipschitz constant becomes about $19$ times better.

\paragraph{Finite manifold identification}
In Figure \ref{fig:SLR}(a), we plot the size of support of the sequence $\seq{\xk}$ generated by the two algorithms. The lines are sub-sampled, one out of every $m$ points.

The two algorithms are started with the same initial point. It is observed that SAGA shows faster manifold identification than Prox-SVRG, this is mainly due the fact that the step-size of SAGA (\ie $\gamma = \frac{1}{2L}$) is larger than that of Prox-SVRG (\ie $\gamma = \frac{1}{3L}$). 
The identification speed of the two algorithms are very close if they are applied under the same choice of step-size.

\begin{figure}[!ht]
\centering
\subfloat[$\abs{\supp(\xk)}$]{ \includegraphics[width=0.45\linewidth]{./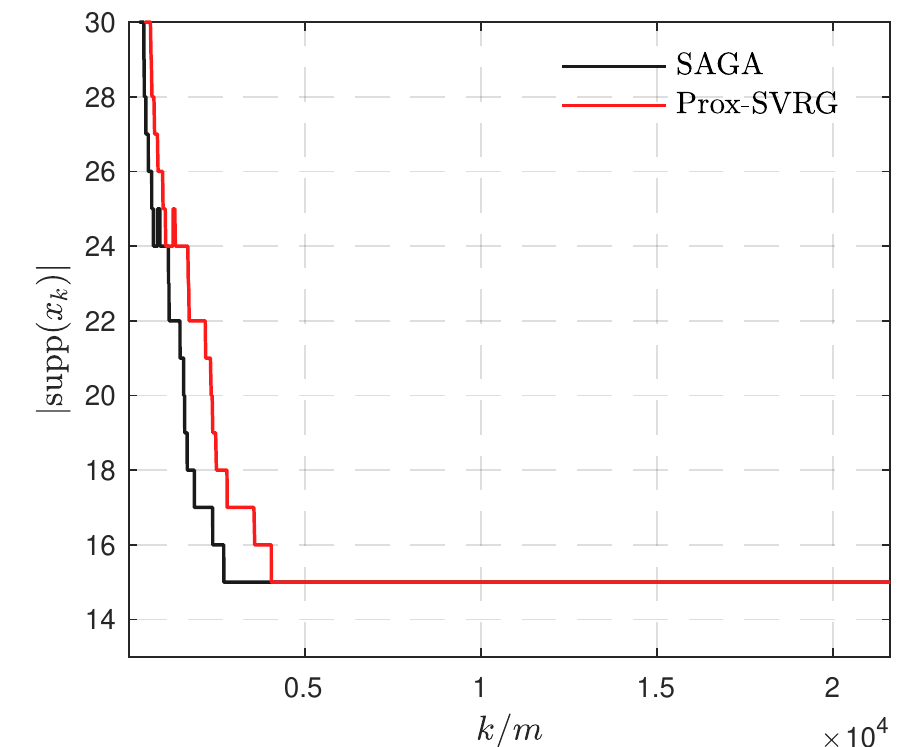} }  \hspace{2pt}
\subfloat[$\norm{\xk-\xsol}$]{ \includegraphics[width=0.45\linewidth]{./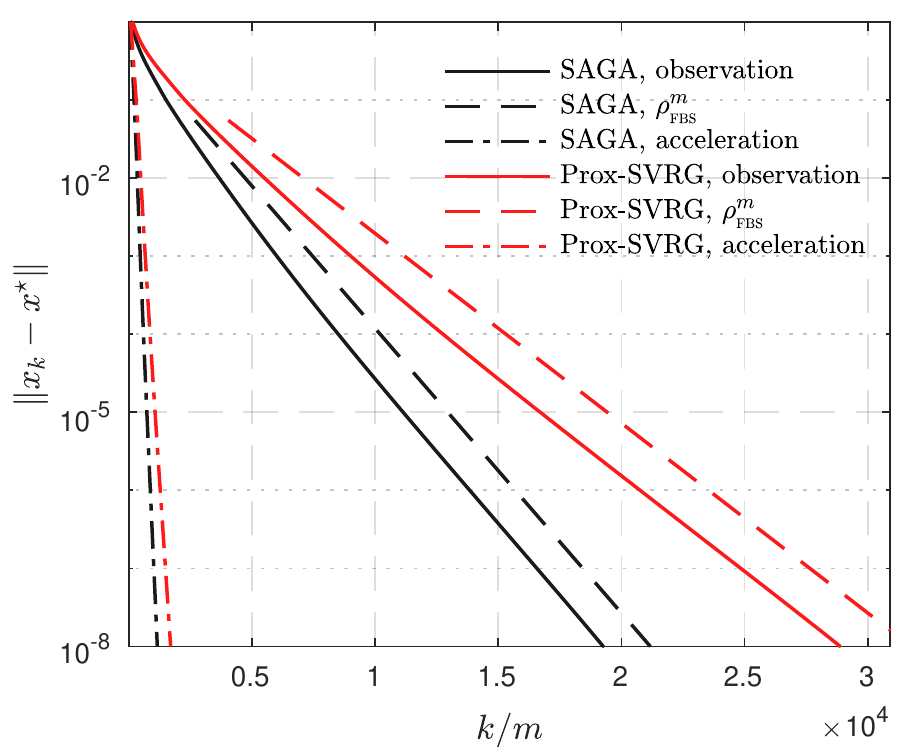} }  
\caption{
Finite manifold identification and local linear convergence of SAGA and Prox-SVRG for solving sparse logistic regression problem in Example \ref{ex:slr}. (a) finite manifold identification of SAGA/Prox-SVRG; (b) local linear convergence of SAGA/Prox-SVRG. $\rho_{_{\mFB}}$ denotes the spectral radius of $\mFB$.
}
 \label{fig:SLR}
\end{figure}

\paragraph{Local linear convergence}
In Figure \ref{fig:SLR}(b), we demonstrate the convergence rate of $\seq{\norm{\xk-\xsol}}$ of the two algorithms. The two \emph{solid} lines are the practical observation of $\seq{\norm{\xk-\xsol}}$ generated by SAGA and Prox-SVRG, the two \emph{dashed} lines are the theoretical estimations using the spectral radius of $\mFB$, and two \emph{dot-dashed} lines are the practical observation of the acceleration of SAGA/Prox-SVRG based on the local Lipschitz continuity $L_{\Msol}$. 
The lines are also sub-sampled, one out of every $m$ points. 

Since $\ell_{1}$-norm is polyhedral, the spectral radius of $\mFB$, denoted by $\rho_{_{\mFB}}$, is determined by $\alpha$ and $\gamma$, that is $\rho_{_{\mFB}} = 1 - \gamma\alpha$. 
Given the values of $\alpha$ and $\gamma$ of SAGA and Prox-SVRG, we have that
\[
\begin{aligned}
\textrm{SAGA}: &~~ \rho_{_{\mFB}} =  0.999993 ,~~ \rho_{_{\mFB}}^m = 0.99916  ; \\
\textrm{Prox-SVRG}: &~~ \rho_{_{\mFB}} =  0.999995  ,~~ \rho_{_{\mFB}}^m = 0.99944   . 
\end{aligned}
\]
For the consider problem setting, the spectral radius quite matches the practical observations.

To conclude this part, we highlight the benefits of adapting to the local Lipschitz continuity of the problem. For both SAGA and Prox-SVRG, their adaptive schemes (\eg \emph{dot-dashed} lines) shows $16$ times faster performance compared to the non-adaptive ones (\eg \emph{solid} lines). 
Such an acceleration gain is on the same order of the difference between the global Lipschitz and local Lipschitz constants, which is $19$ times. 
More importantly, the computational cost of {evaluating} the local Lipschitz {constant}  is almost {negligible}, which makes the adaptive scheme more preferable in practice.

\subsection{Local higher-order acceleration}

Now we consider two problems of group sparse and low-rank regression to demonstrate local higher-order acceleration. 

\begin{example}[Group sparse and low-rank regression \cite{Donoho06,candesExactCompletion}]\label{ex:gt-lasso}
Let $\xob \in \bbR^n$ be either a group sparse vector or a low-rank matrix (in a vectorised form), consider the following observation model
\[
b = \calK \xob + \omega ,    
\]
where the entries of $\calK \in \bbR^{m \times n}$ are sampled from i.i.d. zero-mean and unit-variance Gaussian distribution, $\omega \in \bbR^{m}$ is an additive error with bounded $\ell_{2}$-norm. 

Let $\mu > 0$, and $R(x)$ be either the group sparsity promoting $\ell_{1,2}$-norm or the low rank promoting nuclear norm. Consider the problem to recover or approximate $\xob$,
\beq\label{eq:g-t-lasso}
\min_{x \in \bbR^{n}} \mu R(x) + \qfrac{1}{m} \msum_{i=1}^{m} \sfrac{1}{2} \norm{\calK_{i} x - b_{i}}_{2} ^2 ,
\eeq
where $\calK_{i}, b_{i}$ represent the $i^\textrm{th}$ row and entry of $\calK$ and $b$, respectively. 
\end{example}

We have the following settings for the two examples of $R$:
\begin{description}[leftmargin=4.25cm]
\item[Group sparsity:]
$n = 512 ,~
m = 256$, $\xob$ has $8$ non-zero blocks of block-size $4$;
\item[Low rank:]
$n = 4096 ,~
m = 2048$, the rank of $\xob$ is $4$.
\end{description}
We consider only the SAGA algorithm for this test, as the main purpose is higher-order acceleration. For $\ell_{1,2}$-norm, Newton method is applied after the manifold identification, while for nuclear norm, a non-linear conjugate gradient~\cite{boumal2014manopt} is applied after manifold identification. 

The numerical results are shown in Figure \ref{fig:higher-order}. For $\ell_{1,2}$-norm, the black line is the observation of SAGA algorithm with $\gamma = \frac{1}{3L}$, the red line is the observation of ``SAGA+Newton'' hybrid scheme. It should be noted that the lines are not sub-sampled.

For the hybrid scheme, SAGA is used for manifold identification, and Newton method is applied once the manifold is identified. As observed, the quadratic convergence Newton method converges in only few steps. 
For nuclear norm, a non-linear conjugate gradient is applied when the manifold is identified. Similar to the observation of $\ell_{1,2}$-norm, the super-linearly convergent non-linear conjugate gradient shows superior performance to SAGA. 
%

\begin{figure}[!ht]
\centering
\subfloat[Group sparsity]{ \includegraphics[width=0.45\linewidth]{./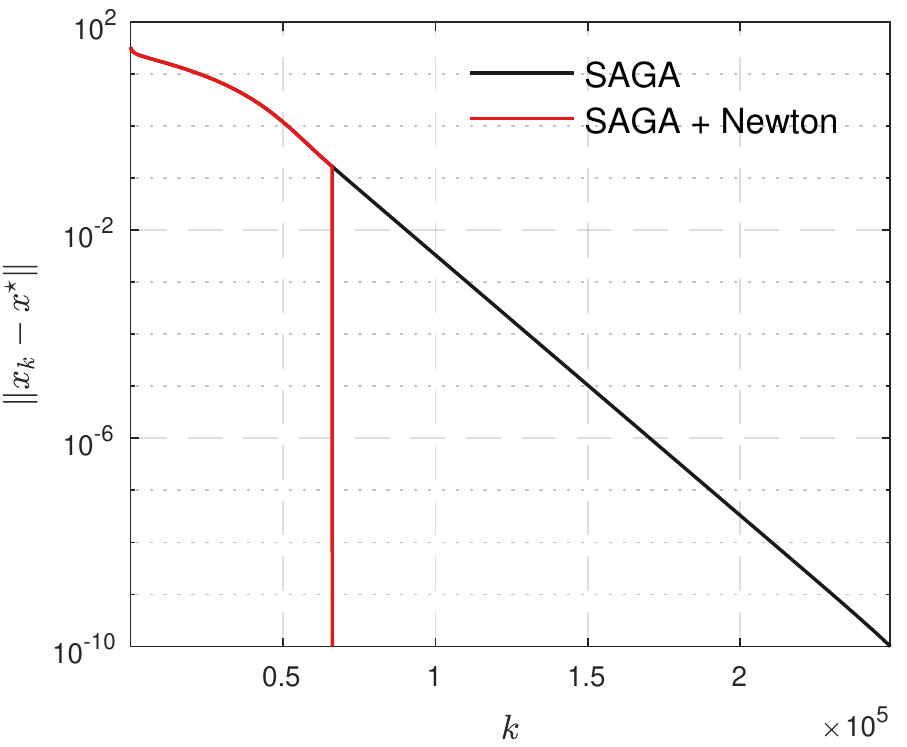} }  \hspace{2pt}
\subfloat[Low rank]{ \includegraphics[width=0.45\linewidth]{./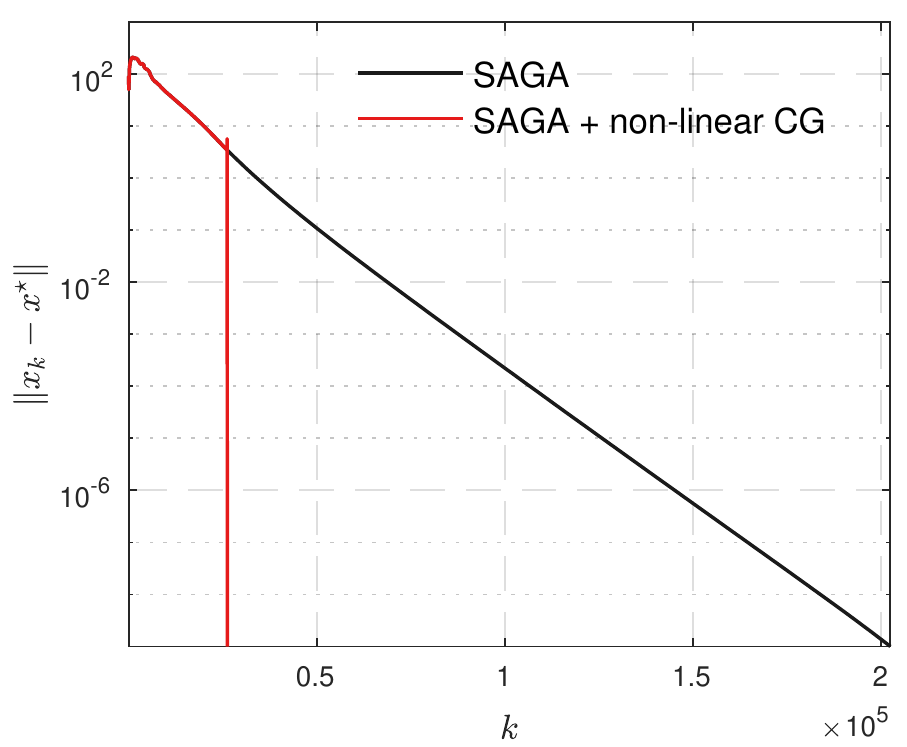} }  
\caption{
    Local higher-order acceleration after manifold identification in Example \ref{ex:gt-lasso}. (a) Newton method is applied after the manifold is identified by SAGA; (b) non-linear conjugate gradient is applied after manifold identification. Black line is the observation of SAGA algorithm, and the red line is the observation of SAGA+higher-order scheme. The black lines of SAGA for both examples are not sub-sampled. 
}
\label{fig:higher-order}
\end{figure}

\section{Conclusion}\label{sec:conclusion}

In this paper, we proposed a unified framework of local convergence analysis for proximal stochastic variance reduced gradient methods, and typically focused on SAGA and Prox-SVRG algorithms. 
Under partial smoothness, we established that these schemes identify the partial smooth manifold in finite time, and then converge locally linearly. Moreover, we proposed several practical acceleration approaches which can greatly improve the convergence speed of the algorithms.

\section*{Acknowledgements}
The authors would like to thank F. Bach, J. Fadili and G. Peyr\'{e} for helpful discussions.

\small
\appendix
\section{Proofs of theorems}
\label{appendix:proof}

\subsection{Proofs for Section \ref{sec:global-convergence}}
\label{appendix:proof-sec2}

To prove Theorem \ref{thm:convergence-saga} and \ref{thm:convergence-proxsvrg}, the lemma below is needed which is classical result from stochastic analysis \cite{neveu1975discrete}.

\begin{lemma}[Supermartingale convergence]\label{lem:supermartingale}
Let $Y_k$, $Z_k$ and $W_k$, $k=0,1,\ldots,$ be three sequences of random variables and let $\Ff_k$, $k=0,1,\ldots,$ be sets of random variables such that $\Ff_k\subset \Ff_{k+1}$ for all $k$. Suppose that:
\begin{enumerate}[label={\rm (\roman{*})}]
\item The random variables $Y_k$, $Z_k$ and $W_k$ are non-negative, and are functions of the random variables in $\Ff_k$.
\item For each $k$, we have $\EE(Y_{k+1}| \Ff_k) \leq Y_k - Z_k + W_k$.
\item With probability 1, $\sum_k W_k <\infty$.
\end{enumerate}
Then we have $\sum_k Z_k<\infty$ and the sequence $Y_k$ converges to a non-negative random variable $Y$ with probability 1.
\end{lemma}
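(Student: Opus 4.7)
The plan is to reduce the statement to Doob's classical non-negative supermartingale convergence theorem, via a telescoping construction together with a truncation at deterministic levels.

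First, I would introduce the auxiliary process
\[
U_k \eqdef Y_k + \sum_{j=0}^{k-1} Z_j - \sum_{j=0}^{k-1} W_j .
\]
Using hypothesis (ii) and the $\Ff_k$-measurability of $Y_k, Z_k, W_k$ from (i), a direct computation gives
\[
\EE(U_{k+1} \mid \Ff_k) = \EE(Y_{k+1}\mid\Ff_k) + \sum_{j=0}^{k}Z_j - \sum_{j=0}^{k}W_j \leq Y_k - Z_k + W_k + \sum_{j=0}^{k}Z_j - \sum_{j=0}^{k}W_j = U_k,
\]
so $\{U_k\}$ is an $\{\Ff_k\}$-supermartingale. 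The catch is that $U_k$ is not guaranteed to be non-negative, so Doob's theorem cannot be applied to it directly.

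Second, I would truncate. For each constant $C > 0$, define $\tau_C \eqdef \inf\{k \geq 0 : \sum_{j=0}^{k} W_j > C\}$, which is an $\{\Ff_k\}$-stopping time thanks to (i). By optional stopping, $U_{k\wedge\tau_C}$ is still a supermartingale, and by construction $\sum_{j=0}^{(k\wedge\tau_C)-1}W_j \leq C$ for all $k$, whence
\[
U_{k\wedge\tau_C} + C \geq Y_{k\wedge\tau_C} + \sum_{j=0}^{(k\wedge\tau_C)-1}Z_j \geq 0.
\]
Doob's non-negative supermartingale convergence theorem then yields almost-sure convergence of $U_{k\wedge\tau_C}$ to a finite limit.

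Third, I would extract both conclusions on the event $\{\tau_C = \infty\}$. There the stopped and unstopped processes coincide, so $Y_k + \sum_{j=0}^{k-1}Z_j - \sum_{j=0}^{k-1}W_j$ converges; since $\sum_{j}W_j$ already converges on this event by (iii), $Y_k + \sum_{j=0}^{k-1}Z_j$ converges as well. Because the $Z_j$ are non-negative, the partial sums $\sum_{j}Z_j$ are monotone non-decreasing in $[0,\infty]$, and coupled with $Y_k\geq 0$ the convergence of the combined sum forces both $\sum_{j}Z_j < \infty$ and $Y_k$ to converge to some non-negative limit. Taking the union $\bigcup_{C\in\bbN}\{\tau_C = \infty\} = \{\sum_{k}W_k<\infty\}$, which has probability one by (iii), yields the full conclusion.

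The main obstacle is the truncation step. The raw process $U_k$ is only bounded below by $-\sum_j W_j$, a random quantity not known a priori, so Doob's theorem cannot be applied to it directly. The stopping-time device is what converts hypothesis (iii) into a uniform, deterministic lower bound on the stopped process; once that is in place, the remainder is a routine application of martingale theory and elementary monotone-sequence reasoning.
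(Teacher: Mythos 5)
The paper does not actually prove this lemma: it is stated as a classical result and delegated to the citation of Neveu's monograph, so there is no in-paper argument to compare against. Your proposal is a correct, self-contained proof, and it follows the standard route for the Robbins--Siegmund lemma: form the compensated process $U_k = Y_k + \sum_{j<k} Z_j - \sum_{j<k} W_j$, verify it is a supermartingale, localise with the stopping times $\tau_C$ at which the running sum of the $W_j$ first exceeds a deterministic level $C$ so that the stopped process is bounded below by $-C$, apply Doob's convergence theorem to $U_{k\wedge\tau_C}+C$, and then exhaust the probability space by the events $\{\tau_C=\infty\}$, whose union is $\{\sum_k W_k<\infty\}$. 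All the individual steps check out: the supermartingale inequality for $U_k$ telescopes correctly, $\tau_C$ is a stopping time because the $W_j$ are $\Ff_j$-measurable, the bound $\sum_{j=0}^{(k\wedge\tau_C)-1}W_j\leq C$ holds by the definition of $\tau_C$, and the final monotonicity argument correctly separates the convergence of $\sum_j Z_j$ from that of $Y_k$. The only point you pass over silently is integrability: the lemma as stated does not assume $\EE\, Y_k<\infty$, and both the supermartingale property of $U_k$ and the optional stopping step implicitly require the stopped process to be integrable (or one must invoke the theory of generalised non-negative supermartingales, which is how the cited reference handles it). A one-line remark, e.g.\ reducing to the case $\EE\, Y_0<\infty$ by conditioning on $\Ff_0$, or simply assuming integrability as is implicit in the statement, would close this. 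What your argument buys over the paper's bare citation is a short, elementary, and fully explicit proof that makes the role of hypothesis (iii) transparent: it is precisely what allows the random lower bound $-\sum_j W_j$ on $U_k$ to be replaced by a deterministic one on a stopped version.
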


\begin{proof}[Proof of Theorem \ref{thm:convergence-saga}]
The convergence of the objective function value for $\gamma_k \equiv \frac{1}{3L}$ is already studied in \cite{saga14}, here for the completeness of the proof, we shall keep the convergence proof of the objective function.

The proof of the theorem consists of several steps. First is the convergence of the objective function value. 
Let $\phi_{k,i}$ be the point such that $g_{k, i} = \nabla f_{i}(\phi_{k,i})$, then following the proof in the original SAGA paper \cite{saga14}, define the following Lyapunov function $\calL$,
\[
\calL_{k} 
\eqdef \calL(\xk, \ba{\phi_{k,i}}_{i=1}^{m})
\eqdef \qfrac{1}{m} \msum_{i=1}^{m} f_{i}(\phi_{k,i}) - F(\xsol) - \qfrac{1}{m} \msum_{i=1}^{m} \iprod{\nabla f_{i}(\xsol)}{\phi_{i, k}-\xsol} + c \norm{\xk - \xsol}^2
\]
for some appropriate $c > 0$. Denote $\bbE_{k}\sp{\cdot}$ the conditional expectation on step $k$. Then following the Appendix C of the supplementary material of \cite{saga14}, one can show that 
\beq\label{eq:calLk}
\bbE_{k}\sp{ \calL_{k+1} } \leq \calL_{k} - \qfrac{1}{4m} \bbE_{k}\sp{ \Phi(\xkp) - \Phi(\xsol) }  .
\eeq
Since $\bbE_{k}\sp{ \Phi(\xkp) - \Phi(\xsol) }$ is a non-negative random variable of the $k^\textrm{th}$ iteration, it then follows that $\seq{\calL_{k}}$ is a supermartingale owing to Lemma \ref{lem:supermartingale}. Therefore $\seq{\calL_{k}}$ converges to a non-negative random variable $\calL^\star$ with probability $1$. At the same time, with probability $1$, $\norm{\xk-\xsol}^2 \leq \frac{1}{c} \calL_{k}$, hence $\seq{\xk}$ is a bounded sequence and every cluster point of $\seq{\xk}$ is a global minimiser of $\Phi$. 
Moreover, from Lemma \ref{lem:supermartingale} and \eqref{eq:calLk}, we have
\[
\msum_{k=0}^{\infty} \Pa{ \bbE_{k}\sp{ \Phi(\xkp)-\Phi(\xsol) } } \leq \calL_{0} < \pinf  
\]
holds almost surely. 
Define a new random variable $y_{j} \eqdef \sum_{k\geq j} \bbE_{k} \sp{ \Phi(\xkp)-\Phi(\xsol) }$, clearly we have $\seq{y_{j}}$ is non-{in}creasing and converges to $0$ as $j \to \pinf$. As a consequence, {by the monotone convergence theorem,} we have
\[
0 
= \bbE \bSp{\lim_{j \to \pinf} y_{j} }
= \lim_{j \to \pinf} \bbE \sp{ y_{j} } 
= \lim_{j \to \pinf} \msum_{k \geq j} \bbE \sp{ \Phi(\xkp) - \Phi(\xsol) }
= \lim_{j \to \pinf} \bbE \bSp{ \msum_{k \geq j} ( \Phi(\xkp) - \Phi(\xsol) ) }  ,
\]
which implies
\beq\label{eq:sum-Phix-Phisol}
\bbE \bSp{ \msum_{k \geq j} \Pa{ \Phi(\xkp) - \Phi(\xsol) } } < \pinf
\Longrightarrow
\msum_{k} \Pa{ \Phi(\xkp) - \Phi(\xsol) } < \pinf ~~\textrm{almost surely}  ,
\eeq
hence $\Phi(\xk) \to \Phi(\xsol)$ almost surely.

With the boundedness of $\seq{\xk}$, the second step is to prove that $\seq{\norm{\xk-\xsol}}$ is convergent. Define a new sequence
\[
\begin{aligned}
\wk &\eqdef 	\qfrac{1}{m} \msum_{i=1}^{m} f_{i}(\phi_{k,i}) - F(\xsol) - \qfrac{1}{m} \msum_{i=1}^{m} \iprod{\nabla f_{i}(\xsol)}{\phi_{i, k}-\xsol} .  
\end{aligned}
\]
Observe that
\[
\bbE_{k} \sp{ \wkp } = \qfrac{1}{m} F(\xk) - F(\xsol) - \qfrac{1}{m} \iprod{\nabla F(\xsol)}{\xk-\xsol} + \Pa{1-\sfrac{1}{m}} \wk  .
\]
Since $\xsol \in \Argmin(\Phi)$ is a global minimiser, we have $-\nabla F(\xsol) \in \partial R(\xsol)$ and $\iprod{-\nabla F(\xsol)}{\xk-\xsol} \leq R(\xk) - R(\xsol)$, therefore from above equality we further obtain
\[
\bbE_{k} \sp{ \wkp } {\leq} \qfrac{1}{m} \Pa{\Phi(\xk)-\Phi(\xsol)} + \Pa{1-\sfrac{1}{m}} \wk  .
\]
Taking expectations over all previous steps for both sides and summing from $k=0$ to $j$ yields
\[
\bbE \sp{ {w_{j+1}} } + \qfrac{1}{m} \msum_{k=1}^{j} \bbE \sp{ \wk }
{\leq} \qfrac{1}{m} \msum_{k=0}^{j} \bbE \Sp{ \Phi(\xk)-\Phi(\xsol) } + \Pa{1-\sfrac{1}{m}} \bbE \sp{ w_{0} }  .
\]
As a result, taking $j$ to $\pinf$ implies that $\bbE \sp{ \sum_{k=1}^{j} \wk } < \pinf$, hence $\sum_{k=1}^{j} \wk < \pinf$ almost surely. Moreover, $\wk \to 0$ with probability $1$. From the convergence result of $\seq{\calL_{k}}$ and $\seq{\wk}$, we have that almost surely $\seq{\norm{\xk-\xsol}}$ is bounded and convergent.

 Next we prove the almost sure convergence of the sequence $\seq{\xk}$. 
Let $\ba{\xsol_{i}}_{i}$ be a countable subset of the relative interior $\ri(\Argmin(\Phi))$ that is dense in $\Argmin(\Phi)$. From the almost sure convergence of $\norm{\xk-\xsol}, \xsol \in \Argmin(\Phi)$, we have that for each $i$, the probability $\prob(\seq{\norm{\xk-\xsol_{i}}}~\textrm{is not convergent}) = 0$. Therefore
\[
\begin{aligned}
\prob\Pa{ \forall i, \exists b_{i}~ ~\st~ \lim_{k\to\pinf} \norm{\xk-\xsol_{i}} }
&= 1 - \prob(\seq{\norm{\xk-\xsol_{i}}}~\textrm{is not convergent}) \\
&\geq 1 - \msum_{i} \prob(\seq{\norm{\xk-\xsol_{i}}}~\textrm{is not convergent}) 
= 1  ,
\end{aligned}
\]
where the inequality follows from the union bound, \ie for each $i$, $\seq{\norm{\xk-\xsol_{i}}}$ is a convergent sequence. 
For a contradiction, suppose that there are convergent subsequences $\ba{u_{k_{j}}}_{k_{j}}$ and $\ba{v_{k_{j}}}_{k_{j}}$ of $\seq{\xk}$ which converge to their limiting points $\usol$ and $\vsol$ respectively, with $\norm{\usol-\vsol} = r > 0$. Since $\Phi(\xk)$ converges to $\inf \Phi$, these two limiting points are necessarily in $\Argmin(\Phi)$. Since $\ba{\xsol_{i}}_{i}$ is dense in $\Argmin(\Phi)$, we may assume that for all $\epsilon > 0$, we have $\xsol_{i_1}$ and $\xsol_{i_2}$ are such that $\norm{\xsol_{i_1}-\usol} < \epsilon$ and $\norm{\xsol_{i_2}-\vsol} < \epsilon$. Therefore, for all $k_{j}$ sufficiently large,
\[
\norm{ u_{k_{j}} - \xsol_{i_1} }
\leq \norm{u_{k_{j}} - \usol} + \norm{\usol + \xsol_{i_1}} 
< \norm{u_{k_{j}} - \usol} + \epsilon .
\]
On the other hand, for sufficiently large $j$, we have
\[
\norm{v_{k_{j}} - \xsol_{i_1}}
\geq \norm{\vsol-\usol} - \norm{\usol-\xsol_{i_1}} - \norm{v_{k_{j}} - \vsol }
> r - \epsilon  - \norm{v_{k_{j}} - \vsol } > r - 2\epsilon .
\]
This contradicts with the fact that $\xk- \xsol_{i_1}$ is convergent. Therefore, we must have $\usol=\vsol$, hence there exists $\xbar \in \Argmin(\Phi)$ such that $\xk \to \xbar$. 

Finally, to see that $\epsksaga\to 0$, from \cite[Lemma 6]{saga14},
$$
\qfrac{1}{m} \msum_{i=1}^m \norm{\nabla f_i(\phi_{k,i}) - \nabla f_i(x^*)}^2 \leq 2L w_k \to 0,
$$
therefore, combining this with the fact that $\nabla f_j$ is $L$-Lipschitz and $\xk \to x^*$, it follows that
$$
\norm{\epsksaga}\leq \norm{\nabla f_{\ik}(\xk) - \nabla f_{\ik}(\phi_{k,i})} + \qfrac{1}{m}\msum_{j=1}^m \norm{\nabla f_{j}(\phi_{k,i}) - \nabla f_j(\xk)} \to 0  ,
$$
which concludes the proof. \qedhere
\end{proof}

To prove Theorem \ref{thm:convergence-proxsvrg}, we require the following lemma,  which is a direct consequence of  Eq. (16) and Corollary~3 of \cite{proxsvrg14}. 
\begin{lemma}\label{lem:prox-svrg}
Assume that $F$ is $\alpha_F$-strongly convex and $R$ is $\alpha_R$-strongly convex. Let $\{x_{\ell,p}\}_{\ell,p}$ be the sequence generated by Prox-SVRG. Then, conditional on step $k=\ell P+p-1$, we have
\begin{equation}\label{eq:prox-svrg-bound}
\begin{split}
& (1+\gamma\alpha_{R})\EE_{k} \sp{ \norm{x_{\ell, p} - \xsol}^2 }  \\
&\leq (1-\gamma\alpha_{F}) \norm{x_{\ell, p-1} - \xsol}^2 - 2\gamma \Pa{ \Phi(x_{\ell, p}) - \Phi(\xsol) }  + 8 L \gamma^2\Ppa{ \Phi(x_{\ell, p-1}) - \Phi(\xsol) + \Phi(\txl) - \Phi(\xsol) }  .
\end{split}
\end{equation}
\end{lemma}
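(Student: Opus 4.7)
\medskip

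\noindent\textbf{Proof proposal.} The plan is to run the standard one-step analysis of the proximal gradient mapping with the stochastic SVRG direction $v_{\ell,p-1} \eqdef \nabla f_{i_p}(x_{\ell,p-1}) - \nabla f_{i_p}(\txl) + \tilde g_\ell$, and then plug in the SVRG variance bound from \cite{proxsvrg14} as a black box. Because $R$ is $\alpha_R$-strongly convex, the objective inside $\prox_{\gamma R}(x_{\ell,p-1} - \gamma v_{\ell,p-1})$ is $(1+\gamma\alpha_R)$-strongly convex, so starting from the minimising definition of $x_{\ell,p}$ I obtain the sharpened three-point inequality
\[
2\gamma\bigl(R(x_{\ell,p}) - R(\xsol)\bigr) + (1+\gamma\alpha_R)\|x_{\ell,p}-\xsol\|^2
\leq \|x_{\ell,p-1}-\xsol\|^2 - \|x_{\ell,p}-x_{\ell,p-1}\|^2 + 2\gamma\langle v_{\ell,p-1}, \xsol - x_{\ell,p}\rangle.
\]
This is the only place where the extra $\alpha_R$-strong convexity of $R$ enters and it is exactly what produces the $(1+\gamma\alpha_R)$ factor on the left of \eqref{eq:prox-svrg-bound}.

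Next I would decompose $\langle v_{\ell,p-1}, \xsol-x_{\ell,p}\rangle = \langle v_{\ell,p-1}, \xsol-x_{\ell,p-1}\rangle + \langle v_{\ell,p-1}, x_{\ell,p-1}-x_{\ell,p}\rangle$. Taking $\EE_k$, the unbiasedness $\EE_k[v_{\ell,p-1}] = \nabla F(x_{\ell,p-1})$ together with $\alpha_F$-strong convexity of $F$ gives
\[
\EE_k\langle v_{\ell,p-1},\xsol - x_{\ell,p-1}\rangle \leq F(\xsol) - F(x_{\ell,p-1}) - \tfrac{\alpha_F}{2}\|x_{\ell,p-1}-\xsol\|^2.
\]
For the second inner product, split $v_{\ell,p-1} = \nabla F(x_{\ell,p-1}) + \bigl(v_{\ell,p-1}-\nabla F(x_{\ell,p-1})\bigr)$; apply the $L$-Lipschitz descent lemma to the first piece to turn $\langle\nabla F(x_{\ell,p-1}), x_{\ell,p-1}-x_{\ell,p}\rangle$ into $F(x_{\ell,p-1})-F(x_{\ell,p}) + \tfrac{L}{2}\|x_{\ell,p}-x_{\ell,p-1}\|^2$, and apply a Young-type inequality $2\gamma\langle w, x_{\ell,p-1}-x_{\ell,p}\rangle \leq \|x_{\ell,p}-x_{\ell,p-1}\|^2 + \gamma^2\|w\|^2$ with $w = v_{\ell,p-1}-\nabla F(x_{\ell,p-1})$. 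The $F(x_{\ell,p-1})$ terms coming from strong convexity and the descent lemma combine with the $R$ and $F$ values to assemble $\Phi(x_{\ell,p}) - \Phi(\xsol)$ on the right-hand side, while the $\|x_{\ell,p}-x_{\ell,p-1}\|^2$ contributions are arranged (using $\gamma \leq 1/L$ implicit in the statement) so that they cancel the negative $\|x_{\ell,p}-x_{\ell,p-1}\|^2$ from the prox inequality. This is the step that needs the most bookkeeping and is where I expect the main obstacle: matching coefficients so the full-step term drops out and the coefficient in front of $\gamma^2\EE_k\|v_{\ell,p-1}-\nabla F(x_{\ell,p-1})\|^2$ comes out as exactly $2$.

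Finally, to close the estimate, I would invoke Corollary~3 of \cite{proxsvrg14}, which is precisely the SVRG variance reduction bound
\[
\EE_k\bigl[\|v_{\ell,p-1}-\nabla F(x_{\ell,p-1})\|^2\bigr] \leq 4L\bigl(\Phi(x_{\ell,p-1}) - \Phi(\xsol) + \Phi(\txl) - \Phi(\xsol)\bigr),
\]
so the $2\gamma^2\EE_k\|v_{\ell,p-1}-\nabla F(x_{\ell,p-1})\|^2$ contribution becomes $8L\gamma^2\bigl(\Phi(x_{\ell,p-1})-\Phi(\xsol)+\Phi(\txl)-\Phi(\xsol)\bigr)$. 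Substituting back yields exactly \eqref{eq:prox-svrg-bound}. Conceptually, the entire argument is just the Xiao--Zhang proof for Prox-SVRG, with the single modification that the $\alpha_R$-strong convexity of $R$ tightens the prox inequality by the factor $(1+\gamma\alpha_R)$; hence the statement truly is a direct consequence of Eq.~(16) and Corollary~3 of \cite{proxsvrg14}.
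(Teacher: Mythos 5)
Your derivation is essentially the argument the paper relies on: the paper gives no proof of this lemma at all, stating only that it is a direct consequence of Eq.~(16) and Corollary~3 of \cite{proxsvrg14}, and your reconstruction is precisely that derivation (the strongly convex three-point property of the prox step for the $(1+\gamma\alpha_R)$ factor, unbiasedness plus $\alpha_F$-strong convexity for the $(1-\gamma\alpha_F)$ factor, and Corollary~3 for the $8L\gamma^2$ term). The coefficient-matching you flag as the main obstacle is handled in \cite{proxsvrg14} not by Young's inequality on $x_{\ell,p-1}-x_{\ell,p}$ (which as written gives only $\gamma^2\norm{w}^2$ and leaves an uncancelled $\gamma L\norm{x_{\ell,p}-x_{\ell,p-1}}^2$) but by comparing $x_{\ell,p}$ with the deterministic point $\prox_{\gamma R}\pa{x_{\ell,p-1}-\gamma\nabla F(x_{\ell,p-1})}$ and using non-expansiveness of the prox, which yields the factor $2\gamma^2\EE_k\sp{\norm{w}^2}$ cleanly because the cross term against the deterministic point has zero conditional expectation.
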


\begin{proof}[Proof of Theorem \ref{thm:convergence-proxsvrg}]
We begin with the remark that following the arguments in the proof of Theorem~\ref{thm:convergence-saga}, to show that $x_{\ell, p} \to \xsol$ almost surely for some $\xsol \in \argmin(\Phi)$, it is sufficient to prove that $\norm{x_{\ell, p} - \xsol}$ is convergent. 
By Lemma \ref{lem:prox-svrg} with $\alpha_R = \alpha_F = 0$, we have that conditional on step $k=\ell P+p-1$,
\begin{equation}\label{eq:svrg-1}
\EE_k \sp{ \norm{x_{\ell, p} - \xsol}^2 } + 2\gamma \EE_k \sp{ \Phi(x_{\ell, p}) - \Phi(\xsol) } 
\leq \norm{x_{\ell, p-1} - \xsol}^2 + 8 L \gamma^2\Ppa{ \Phi(x_{\ell, p-1}) - \Phi(\xsol) + \Phi(\txl) - \Phi(\xsol) }  .
\end{equation}
Summing \eqref{eq:svrg-1} over $p = 1, \ldots, P$ and taking expectation on the random variables $i_1,\ldots, i_P$,  we obtain that
\beq\label{eq:svrg-2}
\begin{aligned}
& \EE \sp{ \norm{x_{\ell, P} - \xsol }^2 } + 2\gamma \EE \sp{ \Phi(x_{\ell, P}) - \Phi(\xsol) } + 2\gamma(1-4L\gamma) \msum_{j=1}^{P-1} \EE \sp{ \Phi(x_{\ell, j}) - \Phi(\xsol ) }  \\
&\leq \norm{\tilde x_\ell - \xsol }^2 + 8L \gamma^2 (P+1) \Pa{ \Phi(\tilde x_{\ell}) - \Phi(\xsol ) }  .
\end{aligned}
\eeq
Since $\gamma \leq \frac{1}{ 4L  (P+2)}$, which yields $2\gamma(1-4L\gamma)  \geq \gamma^2$, we obtain from \eqref{eq:svrg-2}
\[
\begin{aligned}
& \EE \sp{\norm{x_{\ell, P} - \xsol }^2} + (2\gamma-\gamma^2) \EE \sp{ \Phi(x_{\ell, P}) - \Phi(x_*) } + \gamma^2 \msum_{j=1}^{P} \EE\sp{ \Phi(x_{\ell, j}) - \Phi(\xsol ) }  \\
&\leq \norm{\tilde x_{\ell} - \xsol }^2 + 8L \gamma^2 (P+1)(\Phi(\tilde x_{\ell}) - \Phi(\xsol )).
\end{aligned}
\]
Moreover, under ``Option I'', by defining the non-negative random variables 
\[
T_{\ell} \eqdef \norm{\tilde x_{\ell} - \xsol }^2 + (2\gamma-\gamma^2) (\Phi(\tilde x_{\ell}) - \Phi(\xsol )) 
\qandq 
S_{\ell+1} \eqdef \msum_{j=1}^{P} \Pa{ \Phi(x_{\ell, j}) - \Phi(\xsol ) }  .
\]
It follows from $ 8L \gamma^2 (P+1) \leq  2\gamma-\gamma^2$ that
\beq\label{eq:Tl}
\EE \sp{ T_{\ell+1} } \leq T_{\ell} - \gamma^2  \EE \sp{ S_{\ell+1} }  .
\eeq
So, by the super-martingale convergence theorem, $\ba{ T_{\ell} }_{\ell\in\bbN}$ converges to a non-negative random variable and $\sum_{\ell} S_{\ell} < +\infty$ holds almost surely. 
In particular, we have $S_{\ell} \to 0$ as $\ell \to \infty$ and hence, $\Phi(\tilde x_{\ell})\to \Phi(\xsol )$ as $\ell \to \infty$. Therefore, $\norm{\tilde x_{\ell} - \xsol }^2$ converges almost surely. Following the proof of Theorem \ref{thm:convergence-saga}, we can then show that $\tilde x_{\ell}$ converges to an optimal point $\xsol $ almost surely. 

Now we prove that the inner iteration sequence $\{x_{\ell,p}\}_{\substack{1\leq p \leq P,\; \ell \in \NN}}$ also converge to $\xsol$ as $\ell \to\infty$. Consider the inequality~\eqref{eq:svrg-1}, and define the non-negative random variables
\begin{equation}\label{eq:svrg3}
V_{\ell,p} \eqdef \norm{x_{\ell,p} - \xsol}^2 + 2\gamma \Pa{ \Phi(x_{\ell,p}) - \Phi(\xsol)  } 
\qandq
W_{\ell,p} \eqdef 8L\gamma^2 \Pa{ \Phi(\txl) - \Phi(\xsol) }  .
\end{equation}
Equation \eqref{eq:svrg-1} implies that
\[
\EE \sp{ V_{\ell,p} } \leq V_{\ell,p-1} + W_{\ell,p-1} ,
\] 
and moreover $\sum_{\ell,p} W_{\ell,p} = \sum_{\ell}S_{\ell} <\infty$ holds almost surely. 
Therefore, the super martingale convergence theorem implies that $\ba{ V_{\ell,p} }_{p \in \ba{1,\dotsm,P}, \ell \in \bbN}$ converges to a non-negative random variable. Moreover, since $\Phi(x_{\ell,p}) \to \Phi(\xsol)$, it follows that the sequence $\ba{ \norm{x_{\ell,p}- \xsol} }_{p \in \ba{1,\dotsm,P}, \ell \in \bbN}$ is convergent.

To prove the error rate \eqref{eq:svrg-rate}, observe that by convexity of $\Phi$ and Jensen's inequality, we have
\[
\EE \sp{ S_{\ell+1} } 
\geq P \EE  \Ssp{ \Phi\left(\sfrac{1}{P} \msum_{j=1}^P x_{\ell, j} \right)-\Phi(\xsol ) }  ,
\]
which further implies, owing to \eqref{eq:Tl},
\[
P\gamma^2 \EE \Ssp{ \Phi\left(\qfrac{1}{P} \msum_{j=1}^P x_{\ell, j} \right)-\Phi(\xsol ) } 
\leq \EE \sp{ T_\ell } - \EE \sp{ T_{\ell+1} }  .
\]
Summing over $\ell = 1,\ldots, Q$  and telescoping the right hand of the sum we arrive at
\[
\begin{aligned}
Q P\gamma^2 \EE \bSp{ \Phi \Pa{ \sfrac{1}{Q P} \msum_{\ell =1}^{Q} \msum_{j=1}^P x_{\ell, j} } - \Phi(\xsol ) }  
&\leq Q P\gamma^2 \EE \bSp{ \sfrac{1}{Q}\msum_{\ell=1}^{Q} \Phi\Ppa{ \sfrac{1}{P} \msum_{j=1}^P x_{\ell, j} } - \Phi(\xsol ) }   \\
&\leq \EE \sp{ T_{1} } - \EE \sp{ T_{ Q+1} }  ,
\end{aligned}
\]
where the first inequality follows from Jensen's inequality and convexity of $\Phi$. Dividing both sides by $kP\gamma^2$ gives the required error bound. The convergence of $\epsksvrg$ is a straightforward consequence of the convergence of $x_{l,p}$.

\vspace{4pt}

Now we prove the second claim of the theorem. 
Taking expectation of both sides of \eqref{eq:prox-svrg-bound} in Lemma \ref{lem:prox-svrg} and summing from $p=1,\dotsm,P$ yields
\[
\begin{aligned}
&(1-\gamma\alpha_{R})\EE\Sp{ \norm{x_{\ell,P}-\xsol}^2 } + 2\gamma\EE\Sp{ \Phi(x_{\ell,P}) - \Phi(\xsol) }  \\
&\leq -(\alpha_{F}+\alpha_{R}) \msum_{p=1}^{P} \EE\Sp{ \norm{x_{\ell,p}-\xsol}^2 } - \Pa{2\gamma - 8\gamma^2L} \msum_{p=1}^{P-1} \EE\Sp{ \Phi(x_{\ell,p}) - \Phi(\xsol) }  \\
&\qquad + (1-\gamma\alpha_{F}) \EE\Sp{ \norm{x_{\ell,0}-\xsol}^2 } + 8\gamma^2L\EE\Sp{ \Phi(x_{\ell,0}) - \Phi(\xsol) } + 8\gamma^2LP\EE\Sp{ \Phi(\txl) - \Phi(\xsol) }  .
\end{aligned}
\]
Since $\gamma L < \frac{1}{4(P+1)} < \frac{1}{4}$, we have $2\gamma - 8\gamma^2L > 0$, and we have from the above
\[
\begin{aligned}
(1-\gamma\alpha_{R})\EE\Sp{ \norm{x_{\ell,P}-\xsol}^2 } + 2\gamma\EE\Sp{ \Phi(x_{\ell,P}) - \Phi(\xsol) }  
\leq (1-\gamma\alpha_{F}) \EE\Sp{ \norm{x_{\ell,0}-\xsol}^2 }  + 8\gamma^2L(P+1)\EE\Sp{ \Phi(\txl) - \Phi(\xsol) }  .
\end{aligned}
\]
Define 
\[
T_{\ell}
\eqdef (1-\gamma\alpha_{R})\EE\Sp{ \norm{x_{\ell,P}-\xsol}^2 } + 2\gamma\EE\Sp{ \Phi(x_{\ell,P}) - \Phi(\xsol) }  ,
\]
then there holds
\[
\EE\sp{ T_{\ell} }    
\leq  \max\bBa{ \sfrac{1-\gamma\alpha_{F}}{1+\gamma\alpha_{R}}, 4L\gamma(P+1) }  \EE\sp{ T_{\ell-1} }    
\]
which implies the desired result. 
\qedhere
%
\end{proof}

\subsection{Proofs for Section \ref{sec:local-rate}}
\label{appendix:proof-sec4}

\subsubsection{Riemannian Geometry}
\label{sec:riemgeom}

Let $\calM$ be a $C^2$-smooth embedded submanifold of $\bbR^n$ around a point $x$. With some abuse of terminology, we shall state $C^2$-manifold instead of $C^2$-smooth embedded submanifold of $\bbR^n$. The natural embedding of a submanifold $\calM$ into $\bbR^n$ permits to define a Riemannian structure and to introduce geodesics on $\calM$, and we simply say $\calM$ is a Riemannian manifold. We denote respectively $\tanSp{\Mm}{x}$ and $\normSp{\Mm}{x}$ the tangent and normal space of $\Mm$ at point near $x$ in $\calM$.

\paragraph*{Exponential map}

Geodesics generalize the concept of straight lines in $\bbR^n$, preserving the zero acceleration characteristic, to manifolds. Roughly speaking, a geodesic is locally the shortest path between two points on $\calM$. We denote by $\mathfrak{g}(t;x, h)$ the value at $t \in \bbR$ of the geodesic starting at $\mathfrak{g}(0;x,h) = x \in \calM$ with velocity $\dot{\mathfrak{g}}(t;x, h) = \qfrac{d\mathfrak{g}}{dt}(t;x,h) = h \in \tanSp{\Mm}{x}$ (which is uniquely defined).  
For every $h \in \tanSp{\calM}{x}$, there exists an interval $I$ around $0$ and a unique geodesic $\mathfrak{g}(t;x, h): I \to \calM$ such that $\mathfrak{g}(0; x, h) = x$ and $\dot{\mathfrak{g}}(0;x, h) = h$.
The mapping
 \[
\Exp_x 
: \tanSp{\calM}{x} \to  \calM ,~~   h\mapsto \Exp_{x}(h) =  \mathfrak{g}(1;x, h) ,
\]
is called \emph{Exponential map}.
Given $x, x' \in \calM$, the direction $h \in \tanSp{\calM}{x}$ we are interested in is such that 
\[
\Exp_x(h) = x' = \mathfrak{g}(1;x, h)  .
\]

\paragraph*{Parallel translation}

Given two points $x, x' \in \calM$, let $\tanSp{\calM}{x}, \tanSp{\calM}{x'}$ be their corresponding tangent spaces. Define 
\[
\tau : \tanSp{\calM}{x} \to \tanSp{\calM}{x'} ,
\]
the parallel translation along the unique geodesic joining $x$ to $x'$, which is isomorphism and isometry w.r.t. the Riemannian metric.

\paragraph*{Riemannian gradient and Hessian}

For a vector $v \in \normSp{\calM}{x}$, the Weingarten map of $\calM$ at $x$ is the operator $\Wgtmap{x}\pa{\cdot, v}: \tanSp{\calM}{x} \to \tanSp{\calM}{x}$ defined by
\[
\Wgtmap{x}\pa{\cdot, v} = - \PT{\tanSp{\calM}{x}} \mathrm{d} V[h]  ,
\]
where $V$ is any local extension of $v$ to a normal vector field on $\calM$. The definition is independent of the choice of the extension $V$, and $\Wgtmap{x}(\cdot, v)$ is a symmetric linear operator which is closely tied to the second fundamental form of $\calM$, see \cite[Proposition II.2.1]{chavel2006riemannian}.

Let $G$ be a real-valued function which is $C^2$ along the $\calM$ around $x$. The covariant gradient of $G$ at $x' \in \calM$ is the vector $\nabla_{\calM} G(x') \in \tanSp{\calM}{x'}$ defined by
\[
\iprod{\nabla_{\calM} G(x')}{h} = \qfrac{d}{dt} G\Pa{\PT{\calM}(x'+th)}\big|_{t=0} ,~~ \forall h \in \tanSp{\calM}{x'},
\]
where $\PT{\calM}$ is the projection operator onto $\calM$. 
The covariant Hessian of $G$ at $x'$ is the symmetric linear mapping $\nabla^2_{\calM} G(x')$ from $\tanSp{\calM}{x'}$ to itself which is defined as
\beq\label{eq:rh}
\iprod{\nabla^2_{\calM} G(x') h}{h} = \qfrac{d^2}{dt^2} G\Pa{\PT{\calM}(x'+th)}\big|_{t=0} ,~~ \forall h \in \tanSp{\calM}{x'} .
\eeq
This definition agrees with the usual definition using geodesics or connections \cite{miller2005newton}. 
Now assume that $\calM$ is a Riemannian embedded submanifold of $\bbR^{n}$, and that a function $G$ has a $C^2$-smooth restriction on $\calM$. This can be characterized by the existence of a $C^2$-smooth extension (representative) of $G$, \ie a $C^2$-smooth function $\widetilde{G}$ on $\bbR^{n}$ such that $\widetilde{G}$ agrees with $G$ on $\calM$. Thus, the Riemannian gradient $\nabla_{\calM}G(x')$ is also given by
\beq\label{eq:RieGradient}
\nabla_{\calM} G(x') = \PT{\tanSp{\calM}{x'}} \nabla \widetilde{G}(x')  ,
\eeq
and $\forall h \in \tanSp{\calM}{x'}$, the Riemannian Hessian reads
\beq\label{eq:RieHessian}
\begin{aligned}
\nabla^2_{\calM} G(x') h
&= \PT{\tanSp{\calM}{x'}} \mathrm{d} (\nabla_{\calM} G)(x')[h]
= \PT{\tanSp{\calM}{x'}} \mathrm{d} \Pa{ x' \mapsto \PT{\tanSp{\calM}{x'}} \nabla_{\calM} \widetilde{G} }[h]   \\ 
&= \PT{\tanSp{\calM}{x'}} \nabla^2 \widetilde{G}(x') h + \Wgtmap{x'}\Pa{h, \PT{\normSp{\calM}{x'}}\nabla \widetilde{G}(x')}  ,
\end{aligned}
\eeq
where the last equality comes from  \cite[Theorem~1]{absil2013extrinsic}. 
When $\calM$ is an affine or linear subspace of $\bbR^{n}$, then obviously $\calM = x + \tanSp{\calM}{x}$, and $\Wgtmap{x'}\pa{h, \PT{\normSp{\calM}{x'}} \nabla \widetilde{G}(x')} = 0$, hence \eqref{eq:RieHessian} reduces to
\[
\nabla^2_{\calM} G(x')
= \PT{\tanSp{\calM}{x'}} \nabla^2 \widetilde{G}(x')  \PT{\tanSp{\calM}{x'}}  .
\]
See \cite{lee2003smooth,chavel2006riemannian} for more materials on differential and Riemannian manifolds.

The following lemmas summarize two key properties that we will need throughout.
\begin{lemma}[{\cite[Lemma B.1]{liang2017activity}}]\label{lem:parallel-translation}
Let $x \in \calM$, and $\xk$ a sequence converging to $x$ in $\calM$. Denote $\tau_k : \tanSp{\calM}{x} \to \tanSp{\calM}{\xk}$ be the parallel translation along the unique geodesic joining $x$ to $\xk$. Then, for any bounded vector $u \in \bbR^n$, we have
\[
\pa{\tau_k^{-1}\proj_{\tanSp{\calM}{\xk}} - \proj_{\tanSp{\calM}{x}}}u = o(\norm{u})  .
\]
\end{lemma}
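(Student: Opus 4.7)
The claim reduces to two independent continuity facts for the $C^2$ embedded submanifold $\calM$: (a) the orthogonal tangent projector $y \mapsto \proj_{\tanSp{\calM}{y}}$ varies continuously in the base point, and (b) parallel translation along a short geodesic is close to the inclusion of the tangent space into $\bbR^n$. Both are classical; once they are in place, the result follows by exploiting that $\tau_k$ is an isometry.

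First I would establish operator-norm continuity of the projector. Picking a local $C^2$ graph parametrization of $\calM$ around $x$, the tangent space $\tanSp{\calM}{y}$ at a nearby point $y$ is the column span of the Jacobian of the parametrization, which is continuous in $y$. Hence $y \mapsto \proj_{\tanSp{\calM}{y}}$ is continuous, so $\norm{\proj_{\tanSp{\calM}{\xk}} - \proj_{\tanSp{\calM}{x}}} \to 0$ as $k\to\infty$.

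Next I would quantify the closeness of $\tau_k$ to the identity. Write $\xk = \Exp_x(h_k)$ with $h_k \in \tanSp{\calM}{x}$, so that $\norm{h_k} \to 0$. For $v \in \tanSp{\calM}{x}$, let $\tau_t v$ denote the parallel transport of $v$ along $\mathfrak{g}(\,\cdot\,;x,h_k)$ up to time $t$. Gauss's formula applied in $\bbR^n$ yields $\tfrac{d}{dt}\tau_t v \in \normSp{\calM}{\mathfrak{g}(t;x,h_k)}$ with $\norm{\tfrac{d}{dt}\tau_t v} \leq C'\norm{\dot{\mathfrak{g}}(t;x,h_k)}\norm{\tau_t v} = C'\norm{h_k}\norm{v}$, where $C'$ bounds the second fundamental form of $\calM$ on a neighborhood of $x$. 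Integrating over $t\in[0,1]$ gives, for some uniform $C > 0$ and all $k$ large,
\[
\norm{\tau_k v - v} \leq C\,\norm{h_k}\,\norm{v} \quad \forall\, v \in \tanSp{\calM}{x}.
\]

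Finally I would combine the two estimates. For any $u \in \bbR^n$, set $v := \proj_{\tanSp{\calM}{x}} u$ and $v_k := \proj_{\tanSp{\calM}{\xk}} u$. Since $\tau_k$ is an isometry and $v_k - \tau_k v \in \tanSp{\calM}{\xk}$,
\[
\norm{\tau_k^{-1} v_k - v} = \norm{\tau_k^{-1}(v_k - \tau_k v)} = \norm{v_k - \tau_k v} \leq \norm{v_k - v} + \norm{\tau_k v - v}.
\]
By the projector continuity, $\norm{v_k - v} \leq \norm{\proj_{\tanSp{\calM}{\xk}} - \proj_{\tanSp{\calM}{x}}}\norm{u}$; by the parallel-transport estimate, $\norm{\tau_k v - v} \leq C\norm{h_k}\norm{v} \leq C\norm{h_k}\norm{u}$. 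Both prefactors tend to $0$ as $k\to\infty$, yielding $\norm{(\tau_k^{-1}\proj_{\tanSp{\calM}{\xk}} - \proj_{\tanSp{\calM}{x}})u} \leq \epsilon_k \norm{u}$ with $\epsilon_k \to 0$, which is the claimed $o(\norm{u})$ bound.

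The only real obstacle is the parallel-transport estimate in (b): passing from the intrinsic parallel-transport equation to an ambient $\bbR^n$ bound, which is where Gauss's formula and the local boundedness of the second fundamental form (guaranteed by $\calM$ being $C^2$) are needed. An equivalent alternative is to trivialize $T\calM$ via a continuous local frame around $x$, write the parallel-transport ODE in this frame with continuous Christoffel symbols, and apply Gr\"onwall to obtain the same linear dependence on $\norm{h_k}$. Once (b) is in hand, the assembly is a one-line use of the isometry property of $\tau_k$.
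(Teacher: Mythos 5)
Your proof is correct. Note that the paper does not actually prove this lemma: it is imported verbatim as \cite[Lemma~B.1]{liang2017activity} and used as a black box, so there is no in-paper argument to compare against. Your argument is the standard (and essentially the only natural) route: split the operator $\tau_k^{-1}\proj_{\tanSp{\calM}{\xk}} - \proj_{\tanSp{\calM}{x}}$ into a projector-continuity part and a parallel-transport-versus-identity part, and control the latter by the Gauss formula together with a local bound on the second fundamental form. The assembly step is clean --- in particular you correctly observe that $v_k - \tau_k v$ lies in $\tanSp{\calM}{\xk}$ so that the isometry of $\tau_k^{-1}$ applies --- and the final bound $\epsilon_k\norm{u}$ with $\epsilon_k\to 0$ is exactly what the $o(\norm{u})$ notation in the statement means (uniformity in $u$, with the rate depending only on $k$). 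The one place to be slightly careful is that a $C^2$ submanifold gives only continuous Christoffel symbols, so existence and uniqueness of the connecting geodesic is not automatic from Picard--Lindel\"of; but the lemma's statement already posits the unique geodesic, so your use of it is legitimate, and the Gr\"onwall alternative you mention works under the same hypothesis.
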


\begin{lemma}[{\cite[Lemma B.2]{liang2017activity}}]\label{lem:taylor-expn}
Let $x, x'$ be two close points in $\calM$, denote $\tau : \tanSp{\calM}{x} \to \tanSp{\calM}{x'}$ the parallel translation along the unique geodesic joining $x$ to $x'$. The Riemannian Taylor expansion of $\Phi \in C^2(\calM)$ around $x$ reads,
\[\label{eq:taylor-expn}
\tau^{-1} \nabla_{\calM} \Phi(x') = \nabla_{\calM} \Phi(x) + \nabla^2_{\calM} \Phi(x)\proj_{\tanSp{\calM}{x}}(x'-x) + o(\norm{x'-x})  .
\]
\end{lemma}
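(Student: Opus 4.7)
The plan is to parametrize the short arc from $x$ to $x'$ by the connecting geodesic and perform a one-dimensional Taylor expansion in the fixed vector space $\tanSp{\calM}{x}$, after pulling back by parallel translation. Concretely, for $x'$ sufficiently close to $x$, the exponential map $\Exp_x$ is a local $C^1$-diffeomorphism near $0\in\tanSp{\calM}{x}$ with differential at $0$ equal to the canonical inclusion $\tanSp{\calM}{x}\hookrightarrow\bbR^n$, so there exists a unique $h\in\tanSp{\calM}{x}$ with $\mathfrak{g}(1;x,h)=x'$. A standard second-order estimate for the embedding yields
\[
h = \proj_{\tanSp{\calM}{x}}(x'-x) + o(\norm{x'-x}), \qquad \norm{h} = \norm{x'-x} + o(\norm{x'-x}),
\]
reflecting that the normal component $(\Id-\proj_{\tanSp{\calM}{x}})(x'-x)$ is quadratic in $\norm{x'-x}$ for a $C^2$ manifold.

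Next I would transport everything to a fixed vector space. Setting $V(t)\eqdef \tau_t^{-1}\nabla_\calM\Phi(\mathfrak{g}(t;x,h))\in\tanSp{\calM}{x}$, where $\tau_t:\tanSp{\calM}{x}\to\tanSp{\calM}{\mathfrak{g}(t;x,h)}$ is parallel transport along the geodesic, the hypothesis $\Phi\in C^2(\calM)$ together with smooth dependence of parallel transport on $t$ make $V$ a $C^1$ curve into the fixed space $\tanSp{\calM}{x}$. The fundamental theorem of calculus then gives
\[
V(1) - V(0) - V'(0) = \int_0^1\Pa{V'(s)-V'(0)}\,ds = o(1) \text{ as } \norm{h}\to 0.
\]
By the standard characterization of the Riemannian Hessian as the covariant derivative of the gradient field, and the fact that pulling back by parallel transport converts the covariant derivative into the ordinary derivative in the frame over $x$,
\[
V'(0) = \tfrac{D}{dt}\bigl(\nabla_\calM\Phi\circ\mathfrak{g}\bigr)\big|_{t=0} = \nabla^2_\calM\Phi(x)\,\dot{\mathfrak{g}}(0;x,h) = \nabla^2_\calM\Phi(x)\,h.
\]
Substituting this back, and noting $V(1)=\tau^{-1}\nabla_\calM\Phi(x')$ with $\tau=\tau_1$ and $V(0)=\nabla_\calM\Phi(x)$, I would finally use Step~1 together with boundedness of the linear operator $\nabla^2_\calM\Phi(x)$ to absorb the $o(\norm{h})$ term into $o(\norm{x'-x})$ and recover the claim.

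The main obstacle is the first step: making rigorous the comparison $h = \proj_{\tanSp{\calM}{x}}(x'-x) + o(\norm{x'-x})$ between the intrinsic log-map and the extrinsic tangential projection of $x'-x$. This is a purely geometric estimate which relies on the $C^2$ regularity of $\calM$ (so that the second fundamental form is defined and locally bounded), on positivity of the injectivity radius at $x$ (so that ``close enough'' is quantitatively meaningful), and on the fact that $\Exp_x$ agrees with the nearest-point retraction onto $\calM$ to first order at $x$. Once this comparison is established, the remainder of the argument is a classical Riemannian Taylor calculation carried out in the single vector space $\tanSp{\calM}{x}$.
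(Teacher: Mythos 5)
The paper offers no proof of this lemma: it is imported verbatim as Lemma~B.2 of \cite{liang2017activity}, so there is nothing internal to compare against line by line. Your argument is the standard intrinsic proof and is essentially sound: pull the gradient field back along the connecting geodesic by parallel transport, apply the fundamental theorem of calculus in the fixed space $\tanSp{\calM}{x}$, identify $V'(0)$ with $\nabla^2_{\calM}\Phi(x)h$ (consistent with the paper's definition \eqref{eq:rh}, which it notes agrees with the connection-based one), and then trade the intrinsic increment $h$ for the extrinsic quantity $\proj_{\tanSp{\calM}{x}}(x'-x)$. Two points should be tightened. First, your displayed remainder must be $o(\norm{h})$, not merely $o(1)$, for the lemma to follow; this does come out of your integral, since $V'(s)-V'(0)=\bigl(\tau_s^{-1}\nabla^2_{\calM}\Phi(\mathfrak{g}(s;x,h))\,\tau_s-\nabla^2_{\calM}\Phi(x)\bigr)h$ and the operator in parentheses tends to $0$ uniformly in $s\in[0,1]$ by continuity of the Hessian field along $\calM$, but as written the estimate is too weak. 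Second, the geometric comparison you flag as the main obstacle is exactly the ambient Taylor expansion of the geodesic: the ambient acceleration of a geodesic is the second fundamental form evaluated on its velocity, which is continuous (hence locally bounded) for a $C^2$ submanifold, so $x'-x=\mathfrak{g}(1;x,h)-x=h+O(\norm{h}^2)$; projecting gives $\proj_{\tanSp{\calM}{x}}(x'-x)=h+O(\norm{h}^2)$ and $\norm{x'-x}=\norm{h}\,(1+O(\norm{h}))$, which closes the gap and lets you absorb all remainders into $o(\norm{x'-x})$. With these two repairs the proof is complete.
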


\begin{lemma}[Local normal sharpness {\cite[Proposition~2.10]{Lewis-PartlySmooth}}]
\label{fact:sharp}
If $R \in \PSF{x}{\calM}$, then all $x' \in \calM$ near $x$ satisfy $\tanSp{\calM}{x'} = T_{x'}$. In particular, when $\calM$ is affine or linear, then
$T_{x'} = T_{x}$.
\end{lemma}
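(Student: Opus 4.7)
}
My strategy is to reduce the claim to the propagation of partial smoothness along $\calM$. Concretely, I would first establish that if $R \in \PSF{x}{\calM}$, then $R \in \PSF{x'}{\calM}$ for every $x'\in\calM$ sufficiently close to $x$; the required identity $\tanSp{\calM}{x'} = T_{x'}$ is then exactly the sharpness clause at $x'$. So the work lies in verifying the three defining properties of partial smoothness at each such $x'$, not just at $x$.

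For the smoothness clause, I would use that $\calM$ is a $C^2$-manifold in a neighbourhood of $x$ (so locally a $C^2$-manifold at $x'$ as well), and that $R$ restricted to $\calM$ is $C^2$ around $x$, which automatically gives $C^2$-ness at nearby points of $\calM$. For the continuity clause, the continuity of the set-valued map $\partial R$ relative to $\calM$ at $x$ is an open property and transfers to nearby $x'\in\calM$ by standard facts about continuity of set-valued maps. The non-trivial step is the sharpness clause at $x'$, and this is where I would mirror the argument of Lewis \cite[Prop.~2.10]{Lewis-PartlySmooth}. The inclusion $\tanSp{\calM}{x'} \subseteq T_{x'}$ is the easier direction and follows from the fact that moving within $\calM$ from $x'$ does not produce a jump in $R$ (smoothness on $\calM$), so the subdifferential $\partial R$ is Lipschitz--like in directions tangent to $\calM$ and its affine hull is normal to $\tanSp{\calM}{x'}$ up to a translation, yielding $\tanSp{\calM}{x'} \subseteq \LinHull(\partial R(x'))^\perp = T_{x'}$.

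The reverse inclusion $T_{x'} \subseteq \tanSp{\calM}{x'}$ is the main obstacle. My approach is to argue by dimension: since $\tanSp{\calM}{x'}$ and $\tanSp{\calM}{x}$ have the same dimension (the manifold has locally constant dimension around $x$), and since the original sharpness at $x$ yields $\dim T_x = \dim \tanSp{\calM}{x}$, it suffices to show $\dim T_{x'} \leq \dim T_x$ for $x'$ close to $x$. This is precisely upper-semicontinuity of $x' \mapsto \dim \LinHull(\partial R(x'))^\perp$, which follows from the continuity of $\partial R$ relative to $\calM$: any vector in $\LinHull(\partial R(x'))$ can be approximated by combinations of elements of $\partial R(x'')$ for $x''$ near $x'$ on $\calM$, hence by elements of $\LinHull(\partial R(x))$ in the limit, giving $\LinHull(\partial R(x')) \subseteq \LinHull(\partial R(x))$ for $x'$ close to $x$ and therefore $T_x \subseteq T_{x'}$. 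Combined with $\tanSp{\calM}{x'}\subseteq T_{x'}$ and the equality of dimensions, the two subspaces must coincide.

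Finally, for the affine/linear special case, the tangent space of an affine manifold is the same at every point and equal to the underlying linear subspace parallel to $\calM$; so $\tanSp{\calM}{x'} = \tanSp{\calM}{x} = T_x$, and combining with the main statement gives $T_{x'} = T_x$ directly. The main technical hurdle throughout is the subdifferential continuity argument for the reverse sharpness inclusion, which is why the paper cites the full proof from \cite{Lewis-PartlySmooth} rather than reproducing it.
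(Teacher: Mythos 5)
The paper itself offers no proof of this lemma --- it is imported verbatim from \cite[Proposition~2.10]{Lewis-PartlySmooth} --- so the only question is whether your reconstruction is sound. Your overall shape is right (the easy inclusion $\tanSp{\calM}{x'} \subseteq T_{x'}$ from smoothness of $R$ along $\calM$, then a dimension count powered by continuity of $\partial R$ relative to $\calM$), but the key step is executed with the inclusion pointing the wrong way, and as written the argument does not close. You correctly identify that, given $\tanSp{\calM}{x'} \subseteq T_{x'}$ and $\dim\tanSp{\calM}{x'} = \dim\tanSp{\calM}{x} = \dim T_x$, it suffices to prove $\dim T_{x'} \leq \dim T_x$, i.e.\ $\dim \LinHull(\partial R(x')) \geq \dim \LinHull(\partial R(x))$. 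But what you then derive is $\LinHull(\partial R(x')) \subseteq \LinHull(\partial R(x))$, hence $T_x \subseteq T_{x'}$, which yields $\dim T_{x'} \geq \dim T_x$ --- the opposite bound. Combined with $\tanSp{\calM}{x'} \subseteq T_{x'}$ this is perfectly consistent with $T_{x'}$ being \emph{strictly larger} than $\tanSp{\calM}{x'}$, so no equality follows. The correct mechanism is the inner-semicontinuity half of the continuity hypothesis at $x$: choose $g_0, g_1, \dots, g_d \in \partial R(x)$ with $g_1 - g_0, \dots, g_d - g_0$ spanning $\LinHull(\partial R(x))$; inner semicontinuity of $\partial R$ relative to $\calM$ at $x$ produces $g_i' \in \partial R(x')$ with $g_i'$ arbitrarily close to $g_i$ for $x' \in \calM$ near $x$, and since linear independence is an open condition, $\dim\LinHull(\partial R(x')) \geq d$. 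This gives the lower bound on the dimension of the affine hull (equivalently the upper bound on $\dim T_{x'}$) that your argument needs.

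Two secondary points. First, your preliminary reduction --- ``first show $R \in \PSF{x'}{\calM}$ for all nearby $x'$'' --- is both unnecessary and not justified as stated: continuity of a set-valued map \emph{at a single point} is not an open property, so the continuity clause does not automatically transfer to nearby $x'$. Fortunately the corrected dimension argument only uses continuity of $\partial R$ at $x$ itself, so this detour can simply be dropped. Second, the forward inclusion $\tanSp{\calM}{x'} \subseteq T_{x'}$ is better justified not by a ``Lipschitz-like'' heuristic but by the observation that, for convex $R$ whose restriction to $\calM$ is $C^2$, the two one-sided directional derivatives of $R$ at $x'$ along any $h \in \tanSp{\calM}{x'}$ agree; since these equal $\sup_{g \in \partial R(x')} \iprod{g}{h}$ and $\inf_{g \in \partial R(x')} \iprod{g}{h}$ respectively, every pair of subgradients has equal inner product with $h$, so $\LinHull(\partial R(x')) \perp \tanSp{\calM}{x'}$. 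Your treatment of the affine/linear special case is fine.
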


Next we provide expressions of the Riemannian gradient and Hessian for the case of partly smooth functions relative to a $C^2$-smooth submanifold. This is summarized in the following proposition which follows by combining Eq. \eqref{eq:RieGradient} and \eqref{eq:RieHessian}, Definition~\ref{dfn:psf}, Lemma~\ref{fact:sharp} and \cite[Proposition~17]{Daniilidis06} (or \cite[Lemma~2.4]{miller2005newton}).

\begin{lemma}[Riemannian gradient and Hessian]\label{lem:gradhess}
If $R \in \PSF{x}{\calM}$, then for any $x' \in \calM$ near $x$
\[
\nabla_{\calM} R(x') = \proj_{T_{x'}}\pa{\partial R(x')} .
\]
For all $h \in T_{x'}$,
\[
\nabla^2_{\calM} R(x')h = \proj_{T_{x'}} \nabla^2 \widetilde{R}(x')h + \Wgtmap{x'}\Pa{h, \proj_{T_{x'}^\perp}\nabla \widetilde{R}(x')}  ,
\]
where $\widetilde{R}$ is a smooth representation of $R$ on $\calM$, and $\Wgtmap{x}\pa{\cdot, \cdot}: T_x \times T_x^\perp \to T_x$ is the Weingarten map of $\calM$ at $x$. 
\end{lemma}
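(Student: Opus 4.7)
The plan is to deduce Lemma~\ref{lem:gradhess} by specialising the general Riemannian formulas \eqref{eq:RieGradient} and \eqref{eq:RieHessian} to the partly smooth setting. The bridge between the two worlds is Lemma~\ref{fact:sharp}, which tells us that in a neighbourhood of $x$ inside $\calM$, the geometric tangent space $\tanSp{\calM}{x'}$ coincides with the algebraic subspace $T_{x'} = \LinHull(\partial R(x'))^\perp$. Consequently, once a $C^2$ representative $\widetilde R$ of $R$ on $\calM$ is fixed (its existence is guaranteed by the smoothness clause of Definition~\ref{dfn:psf}), the Riemannian calculus formulas become expressions in terms of $T_{x'}$ and $T_{x'}^\perp$, and it only remains to reconcile $\nabla \widetilde R(x')$ with the sub-differential $\partial R(x')$.

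For the gradient identity, I would apply \eqref{eq:RieGradient} to $\widetilde R$ at $x' \in \calM$ near $x$ to obtain $\nabla_\calM R(x') = \proj_{\tanSp{\calM}{x'}} \nabla \widetilde R(x')$, then replace $\tanSp{\calM}{x'}$ by $T_{x'}$ via Lemma~\ref{fact:sharp}. The content of the claim $\nabla_\calM R(x') = \proj_{T_{x'}}(\partial R(x'))$ therefore reduces to showing that $\proj_{T_{x'}}$ collapses the whole of $\partial R(x')$ to a single point, namely $\proj_{T_{x'}} \nabla \widetilde R(x')$. By the sharpness clause of Definition~\ref{dfn:psf}, $\partial R(x')$ is contained in an affine subspace parallel to $T_{x'}^\perp$, so $\proj_{T_{x'}}$ is indeed constant on $\partial R(x')$; since $\widetilde R$ coincides with $R$ on $\calM$, $\nabla \widetilde R(x')$ is a valid representative of this constant projection, which can be made rigorous by appealing to the decomposition $\partial R(x') = \nabla \widetilde R(x') + N_{T_{x'}}$ on $\calM$ established in \cite{Lewis-PartlySmooth} (or \cite[Prop.~17]{Daniilidis06}).

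For the Hessian identity, I would apply \eqref{eq:RieHessian} to $\widetilde R$ at $x'$: for every $h \in \tanSp{\calM}{x'}$,
\[
\nabla^2_\calM R(x') h = \proj_{\tanSp{\calM}{x'}} \nabla^2 \widetilde R(x') h + \Wgtmap{x'}\bigl(h, \proj_{\tanSp{\calM}{x'}^\perp} \nabla \widetilde R(x')\bigr),
\]
and then identify $\tanSp{\calM}{x'}$ with $T_{x'}$ (and hence $\tanSp{\calM}{x'}^\perp$ with $T_{x'}^\perp$) via Lemma~\ref{fact:sharp}. A subtlety worth checking is that the formula is independent of the choice of $C^2$ representative $\widetilde R$: any two representatives differ only outside $\calM$, so their gradients differ only by elements of $T_{x'}^\perp$, and the Weingarten term together with $\proj_{T_{x'}}\nabla^2 \widetilde R(x')\proj_{T_{x'}}$ accounts exactly for this ambiguity, see \cite[Theorem~1]{absil2013extrinsic}.

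The main obstacle I anticipate is the assertion that $\proj_{T_{x'}}$ is constant on $\partial R(x')$ with value $\proj_{T_{x'}} \nabla \widetilde R(x')$. Sharpness in Definition~\ref{dfn:psf} only says that $T_{x'} = \LinHull(\partial R(x'))^\perp$, which immediately gives that $\Aff(\partial R(x'))$ is parallel to $T_{x'}^\perp$ and hence projects onto a point; the non-trivial part is matching this point to $\proj_{T_{x'}} \nabla \widetilde R(x')$, which requires the sub-differential continuity and smoothness clauses together (since $\widetilde R$ must agree with $R$ along directions tangent to $\calM$ at all nearby points of $\calM$). Once this is in hand, both formulas follow immediately from Eqs.~\eqref{eq:RieGradient}--\eqref{eq:RieHessian} combined with Lemma~\ref{fact:sharp}.
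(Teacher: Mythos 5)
Your proposal is correct and follows essentially the same route as the paper, which justifies the lemma precisely by combining Eqs.~\eqref{eq:RieGradient} and \eqref{eq:RieHessian} with Definition~\ref{dfn:psf}, Lemma~\ref{fact:sharp}, and the decomposition of $\partial R$ along $\calM$ from \cite[Proposition~17]{Daniilidis06} (or \cite[Lemma~2.4]{miller2005newton}). You merely spell out the sharpness argument showing $\proj_{T_{x'}}$ is single-valued on $\partial R(x')$, which the paper delegates to those references.
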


\subsubsection{Proofs}

\begin{proof}[Proof of Proposition~\ref{prop:linearisation}]
By virtue the definition of proximity operator and the update of $\xkp$ in \eqref{eq:inexact-fbs}, we have 
\[
\xk - \xkp - \gammak \Pa{\nabla F(\xk) - \nabla F(\xkp)} - \gammak\epsk \in \gammak \partial \Phi(\xkp) . 
\]
Given a global minimiser $\xsol \in \Argmin(\Phi)$, the classic optimality condition entails that
\[
0 \in \gammak \partial \Phi(\xsol)  .
\]
Projecting the above two inclusions on to $T_{\xkp}$ and $\Tsol$, respectively and using Lemma~\ref{lem:gradhess}, lead to
\[
\begin{aligned}
\gammak \tau_{k+1}^{-1}  \nabla_{\Msol} \Phi(\xkp)  
&= \tau_{k+1}^{-1}\PT{T_{\xkp}} \Pa{\xk - \xkp - \gammak \pa{\nabla F(\xk) - \nabla F(\xkp)} - \gammak\epsk } \\
\gammak \nabla_{\Msol} \Phi(\xsol)  &=  0  .
\end{aligned}
\]
Adding both identities, and subtracting $\tau_{k+1}^{-1}\PT{T_{\xkp}}\xsol$ on both sides, we get
\beq\label{eq:inter-step-1}
\begin{aligned}
& \tau_{k+1}^{-1}\PT{T_{\xkp}} (\xkp-\xsol) + \gammak \Pa{\tau_{k+1}^{-1}\nabla_{\Msol} \Phi(\xkp) - \nabla_{\Msol} \Phi(\xsol)}  \\
&= \tau_{k+1}^{-1}\PT{T_{\xkp}}\pa{\xk - \xsol} - \gammak \tau_{k+1}^{-1}\PT{T_{\xkp}} \Pa{\nabla F(\xk) - \nabla F(\xkp)}  - { \gammak \tau_{k+1}^{-1}\PT{T_{\xkp}} \epsk }  .
\end{aligned}
\eeq
For each term of \eqref{eq:inter-step-1}, we have the following result
\begin{enumerate}[label={\rm (\roman{*})}]
\item
By virtue of Lemma~\ref{lem:parallel-translation}, we get
\[
\begin{aligned}
\tau_{k+1}^{-1} \PT{T_{\xkp}} \pa{\xkp-\xsol} 
&= \PT{\Tsol} \pa{\xkp-\xsol} + \pa{\tau_{k+1}^{-1} \PT{T_{\xkp}} - \PT{\Tsol}} \pa{\xkp-\xsol}  \\
&= \PT{\Tsol} \pa{\xkp-\xsol}  + o(\norm{\xkp-\xsol}) .
\end{aligned}
\]
With the help of \cite[Lemma~5.1]{liang2014local}, that $\xkp-\xsol = \PT{\Tsol} \pa{\xkp-\xsol} + o(\norm{\xkp-\xsol})$, we further derive
\beq\label{eq:PT-xkp-xsol}
\tau_{k+1}^{-1} \PT{T_{\xkp}} \pa{\xkp-\xsol} 
 = \pa{\xkp-\xsol}  + o(\norm{\xkp-\xsol}) . 
\eeq
Similarly for $\xk$, we have $\tau_{k+1}^{-1} \PT{T_{\xkp}}\pa{\xk - \xsol} =  \pa{\xk - \xsol} + o(\norm{\xk - \xsol})$. 
%
\item
Owing to Lemma \ref{lem:taylor-expn}, we have for $\tau_{k+1}^{-1}\nabla_{\Msol} \Phi(\xkp) - \nabla_{\Msol} \Phi(\xsol)$,
\beq\label{eq:hess-stuff}
\tau_{k+1}^{-1}\nabla_{\Msol} \Phi(\xkp) - \nabla_{\Msol} \Phi(\xsol)
= \nabla^2_{\Msol} \Phi(\xsol)\proj_{\Tsol} (\xkp-\xsol) + o(\norm{\xkp-\xsol})  .
\eeq
\item
Using Lemma \ref{lem:taylor-expn} again together with the local $C^2$-smoothness of $F$, we have
\beq\label{eq:gradF-sth-1}
\begin{aligned}
&\tau_{k+1}^{-1}\PT{T_{\xkp}}\Pa{\nabla F(\xk) - \nabla F(\xkp)} \\
&= \PT{\Tsol}\Pa{\nabla F(\xk) - \nabla F(\xkp)} + o(\norm{\nabla F(\xk) - \nabla F(\xkp)}) \\
&= \PT{\Tsol} \Pa{ \pa{ \nabla F(\xk) - \nabla F(\xsol)} - \pa{ \nabla F(\xkp) - \nabla F(\xsol)} }  + o(\norm{\nabla F(\xk) - \nabla F(\xsol)} + \norm{\nabla F(\xkp) - \nabla F(\xsol)}) \\
&= \PT{\Tsol} \nabla^2 F(\xsol) \PT{\Tsol}  (\xk-\xsol) - \PT{\Tsol} \nabla^2 F(\xsol) \PT{\Tsol}  (\xkp-\xsol) + o(\norm{\xk-\xsol}) + o(\norm{\xkp-\xsol})  .
\end{aligned}
\eeq
\item
Owing to Lemma~\ref{lem:parallel-translation}, we have ${  \tau_{k+1}^{-1}\PT{T_{\xkp}} \epsk }
= {  \PT{\Tsol} \epsk  + o(\norm{\epsk}) }$. 
\end{enumerate}
Combining the above relations with \eqref{eq:inter-step-1} we obtain
\beq\label{eq:inter-step-2}
\begin{aligned}
& \Pa{ \Id + \gammak \nabla^2_{\Msol} {\Phi}(\xsol)\proj_{\Tsol} - \gammak \PT{\Tsol}\nabla^2 F(\xsol)\PT{\Tsol} } (\xkp-\xsol) + o(\norm{\xkp-\xsol}) \\
&= \pa{ \Id + \gammak \HR } (\xkp-\xsol) + o(\norm{\xkp-\xsol})  \\
&=\pa{ \Id + \gamma \HR } (\xkp-\xsol) + (\gammak-\gamma) \HR (\xkp-\xsol) + o(\norm{\xkp-\xsol})  \\
&= \pa{\xk - \xsol} - \gammak \HF  (\xk-\xsol) + o(\norm{\xk-\xsol})   - \pa{ \gammak \PT{\Tsol} \epsk  + o(\norm{\epsk}) }   \\
&= \pa{\xk - \xsol} - \gamma \HF  (\xk-\xsol) - (\gammak-\gamma) \HF  (\xk-\xsol) + o(\norm{\xk-\xsol})   - \pa{ \gamma \PT{\Tsol} \epsk  + o(\norm{\epsk}) } - (\gammak-\gamma) \PT{\Tsol} \epsk  .
\end{aligned}
\eeq
Since we have $\gammak \to \gamma$ and $\HR$ is bounded, we have
\beqn
\lim_{k\to+\infty} \qfrac{ \norm{(\gammak-\gamma) \HR (\xkp-\xsol)} }{\norm{\xkp-\xsol}} 
\leq \lim_{k\to+\infty} \qfrac{ \abs{\gammak-\gamma} \norm{\HR} \norm{\xkp-\xsol} }{\norm{\xkp-\xsol}} 
= 0  ,
\eeqn
hence $(\gammak-\gamma) \HR (\xkp-\xsol) = o(\norm{\xkp-\xsol})$. 
Using the same arguments lead to $(\gammak-\gamma) \HF  (\xk-\xsol) = o(\norm{\xk-\xsol})$ and $(\gammak-\gamma) \PT{\Tsol} \epsk = o(\norm{\epsk})$.
Therefore, from \eqref{eq:inter-step-2}, we obtain
\beq\label{eq:inter-step-3}
\begin{aligned}
\pa{\Id + \gamma \HR} (\xkp-\xsol) + o(\norm{\xkp-\xsol})  
= \GF \pa{\xk -  \xsol}  + o(\norm{\xk-\xsol})   - \pa{ \gamma \PT{\Tsol} \epsk  + o(\norm{\epsk}) }   .
\end{aligned}
\eeq
Inverting $\Id + \gamma \HR$ (which is possible owing to Lemma~\ref{lem:eigmtxs}), we obtain
\beq\label{eq:inter-step-4} 
\begin{aligned}
\xkp-\xsol + \WR o(\norm{\xkp-\xsol})
&= \WR \GF \pa{\xk - \xsol} + \WR o(\norm{\xk-\xsol})   - \WR \pa{ \gamma \PT{\Tsol} \epsk  + o(\norm{\epsk}) }   \\
&= \mFB \pa{\xk - \xsol} + \WR o(\norm{\xk-\xsol})   -   \WR \pa{  \gamma\PT{\Tsol} \epsk  + o(\norm{\epsk}) }   .
\end{aligned}
\eeq
Since $\WR, \GF$ are non-expansive, we have
\[
\xkp-\xsol + \WR o(\norm{\xkp-\xsol})
= \xkp-\xsol +  o(\norm{\xkp-\xsol})
= \dkp  ,
\]
and similarly, we have $\mFB \pa{\xk - \xsol} + \WR o(\norm{\xk-\xsol}) = \mFB \pa{\xk - \xsol} +  o(\norm{\xk-\xsol}) = \mFB \dk$ and $\gamma \WR \PT{\Tsol} \epsk  + \WR o(\norm{\epsk}) = \gamma \WR \PT{\Tsol} \epsk  +  o(\norm{\epsk}) = \phi_{k}$. Substituting back into \eqref{eq:inter-step-4} we conclude the proof. 
\qedhere
\end{proof}

\begin{small}
\bibliographystyle{plain}
\bibliography{bib}
\end{small}

\end{document}